\documentclass[reqno,oneside,a4paper]{amsart}
\usepackage[hmargin={2.54cm,2.54cm},vmargin={3.17cm,3.17cm}]{geometry}
\usepackage{amsmath,amssymb,latexsym}
\usepackage{amsthm}
\usepackage{xcolor}
\usepackage{latexsym}
\usepackage[colorlinks=true,linkcolor=blue,citecolor=red,anchorcolor=green]{hyperref}
\allowdisplaybreaks
\topmargin=0pt \oddsidemargin=0pt \evensidemargin=0pt
\textwidth=15cm \textheight=22cm \raggedbottom
\setlength{\parindent}{4mm} \setlength{\parskip}{5mm plus 2mm minus2mm}
\theoremstyle{definition}
\newtheorem{Theorem}{Theorem}[section]
\newtheorem{lemma}[Theorem]{Lemma}

\newtheorem{Corollary}[Theorem]{Corollary}
\newtheorem{remark}[Theorem]{Remark}

\numberwithin{equation}{section}

\newcommand{\la}{\langle}
\newcommand{\ra}{\rangle}

\newcommand{\beq}{\begin{equation}}
\newcommand{\eeq}{\end{equation}}
\newcommand{\bes}{\begin{equation*}}
\newcommand{\ees}{\end{equation*}}
\newcommand{\rmi}{{\rm i}}
\newcommand{\Id}{{\rm Id}}

\newcommand{\lrp}[1]{\ensuremath{\left(#1\right)}}

\def\N{{\Bbb N}}

\def\R{{\Bbb R}}
\def\T{{\Bbb T}}

\hfuzz=\maxdimen
\tolerance=10000
\hbadness=10000

\def\N{{\Bbb N}}

\def\R{{\Bbb R}}
\def\T{{\Bbb T}}

\hfuzz=\maxdimen
\tolerance=10000
\hbadness=10000
\begin{document}
\title[Reducibility of 1-D quantum harmonic oscillator]{Reducibility of 1-D quantum harmonic oscillator with a new unbounded oscillatory perturbations}

\author{Xu Jin, Luo Jiawen, Wang Zhiqiang and Liang Zhenguo}
\address{School of Mathematical Sciences and
Key Lab of Mathematics for Nonlinear Science, Fudan University,
Shanghai 200433, China} 
\email{19110180005@fudan.edu.cn,  20110180010@fudan.edu.cn, 19110180010@fudan.edu.cn, zgliang@fudan.edu.cn}

\date{}

\begin{abstract} 
Enlightened by Lemma  1.7 in \cite{LiangLuo2021}, we prove a similar lemma which is 
based 
upon oscillatory integrals and Langer's  turning point theory.
 From it we show that the Schrödinger equation
\[
{\rm i}\partial_t u =-\partial_x^2 u+x^2 u+
\epsilon \langle x\rangle^\mu\sum_{k\in\Lambda}\left(a_k(\omega t)\sin(k|x|^\beta)+b_k(\omega t )\cos(k|x|^\beta)\right) u,
\quad u=u(t,x),~x\in\mathbb{R},~ \beta>1,
\]
can be reduced in $\mathcal{H}^1(\mathbb{R})$  to an autonomous system for most values of the frequency vector $\omega$,
where $\Lambda\subset\mathbb R\setminus\{0\}$,  $|\Lambda|<\infty$ and  $\langle x\rangle:=\sqrt{1+x^2}$.  The functions $a_k(\theta)$ and $b_k(\theta)$ are analytic on $\mathbb T^n_\sigma$ and $\mu\geq 0$ will be chosen according to the value of $\beta$. Comparing with \cite{LiangLuo2021},  the novelty is that the phase functions  of oscillatory integral are more degenerate when $\beta>1$. 
\end{abstract}
\maketitle 

\section{Introduction of the main results}
\subsection{Main theorem}
Following \cite{LiangLuo2021} we continue to consider the reducibility  for the time dependent Schr\"odinger  equation 
\begin{equation}\label{QHO}
\begin{gathered}
	\rmi\partial_t u=H_\epsilon(\omega t)u,\quad x\in\mathbb R,\\
	H_\epsilon:=-\partial_{xx}+x^2+\epsilon X(x,\omega t),
\end{gathered}
\end{equation}
where 
\begin{eqnarray*}
X(x,\theta)=\langle x\rangle^\mu\sum_{k\in\Lambda}\left(a_k(\theta)\sin(k|x|^\beta)+b_k(\theta)\cos(k|x|^\beta)\right)
\end{eqnarray*}
with $\Lambda\subset\mathbb R\setminus\{0\}$, $|\Lambda|<\infty$ and  $\la x\ra:=\sqrt{1+x^2}$. 
The functions 
$a_k(\theta)$ and $b_k(\theta)$ are analytic on $\mathbb T^n_\sigma=\{a+b\rmi\in\mathbb C^n/2\pi\mathbb Z^n:|b|<\sigma\}$ with $\sigma>0$ and $\beta>1$ and 
$\mu\geq 0$ will be chosen in the following.  We first introduce some functions and spaces.  \\
\indent
\textbf{Hermite functions}. The harmonic oscillator $T=-\partial_{xx}+x^2$ has eigenfunctions $(h_m)_{m\ge1}$, so called the Hermite functions, namely
\begin{equation}\label{HermiteFunction}
Th_m=(2m-1)h_m,\qquad \|h_m\|_{L^2(\mathbb R)}=1,\qquad m\ge1.
\end{equation}
\indent
\textbf{Linear spaces}. 
For $s\ge0$ denote by $\mathcal H^s$ the domain of $T^{\frac s2}$ endowed by the graph norm. For $s<0$, the space $\mathcal H^s$ is the dual of $\mathcal H^{-s}$. Particularly, for $s\ge0$ a integer we have
\[
\mathcal H^s=\{f\in L^2(\mathbb R):x^\alpha\partial^\beta f\in L^2(\mathbb R),~\forall~\alpha,\beta\in\mathbb N_0,~\alpha+\beta\le s\}.
\]
We also define the complex weighted-$\ell^2$-space
$
\ell_s^2:=\{\xi=(\xi_m\in\mathbb C,m\ge1):\sum_{m\ge1}m^s|\xi_m|^2<\infty\}.
$
To a function $u\in\mathcal H^s$ we associate the sequence $\xi$ of its Hermite coefficients by the formula $u=\sum_{m\ge1}\xi_mh_m(x)$. In the following we will identify the space $\mathcal H^s$ with $\ell_s^2$ by endowing both space the norm 
\[
\|u\|_{\mathcal H^s}=\|\xi\|_s=\left(\sum_{m\ge1}m^s|\xi_m|^2\right)^\frac12.
\]
Define 
\begin{eqnarray}\label{lbetamu}
l_*= l(\beta,\mu)=
\begin{cases}
	\frac{1}{4}\left(\frac\beta6-\mu\right),& 1<\beta<2,\\
	\frac{1}{4}\left(\frac{2}{9}-\mu\right),&\beta=2,\\
	\frac{1}{4}\left(\frac{\beta-2}{4\beta-2}-\mu\right),&\beta> 2.
\end{cases}
\end{eqnarray}

Then we can state our main theorem.
\begin{Theorem}\label{MainTheorem}
Assume $a_k(\theta)$ and $b_k(\theta)$ are analytic on $\mathbb T^n_\sigma$ with $\sigma>0$ and $\beta>1$ and $\mu$ satisfies 
\begin{equation}\label{muRange}
0\le\mu<
\begin{cases}
\frac\beta6,&1<\beta<2,\\
\frac29,&\beta=2,\\
\frac{\beta-2}{4\beta-2},&\beta>2.
\end{cases}
\end{equation}
\noindent There exists $\epsilon_*>0$ such that for all $0\le\epsilon<\epsilon_*$ there is a closed set $ D_\epsilon\subset D_0=[0,2\pi]^n$ of asymptotically full measure such that for all $\omega\in D_\epsilon$, the linear Schr\"odinger equation (\ref{QHO}) reduces to a linear autonomous equation in $\mathcal H^1$.\\ \indent
More precisely, for any $\omega\in D_\epsilon$ there exists a  linear isomorphism $\Psi_{\omega,\epsilon}^{\infty}(\theta)\in\mathfrak{L}(\mathcal H^{s'})$ with $0\le s'\le1$, analytically dependent on $\theta\in\mathbb T^n_{\sigma/2}$ and unitary on $L^2(\mathbb R)$, 
where  $\Psi_{\omega,\epsilon}^{\infty}-\Id\in\mathfrak{L}(\mathcal H^0,\mathcal H^{2l_{*}})\cap\mathfrak{L}(\mathcal H^{s'})$  and a bounded Hermitian operator $Q\in\mathfrak{L}(\mathcal H^1)$ such that $t\mapsto u(t,\cdot)\in\mathcal H^1$ satisfies \eqref{QHO} if and only if $t\mapsto v(t,\cdot)=\Psi_{\omega,\epsilon}^{\infty}u(t,\cdot)\in\mathcal H^1$ satisfies the autonomous equation 
\[
\rmi\partial_t v=-v_{xx}+x^2v+\epsilon Q(v),
\]
furthermore,  there are constants $C,K>0$ such that
\[
\begin{gathered}
	\text{Meas}( D_0\setminus D_\epsilon)\le C\epsilon^\frac{3l_*}{2(2l_*+5)(2l_*+1)},\\
	\|Q\|_{\mathfrak{L}(\mathcal H^p,\mathcal H^{p+4l_*})}+\|\partial_\omega Q\|_{\mathfrak{L}(\mathcal H^p,\mathcal H^{p+4l_*})}\le K,\quad\omega\in D_\epsilon,~p\in\mathbb N,\\
	\|\Psi_{\omega,\epsilon}^{\infty}(\theta)-\Id\|_{\mathfrak{L}(\mathcal H^0,\mathcal H^{2l_{*}})},\|\Psi_{\omega,\epsilon}^{\infty}(\theta)-\Id\|_{\mathfrak{L}(\mathcal H^{s'})}\le C\epsilon^\frac13,\quad(\omega,\theta)\in D_\epsilon\times\mathbb T^n_{\sigma/2}.
\end{gathered}
\]
\end{Theorem}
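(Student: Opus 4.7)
I plan to work in the Hermite basis $(h_m)_{m\geq 1}$, under which $T=-\partial_{xx}+x^2$ becomes $\mathrm{diag}(2m-1)$ and $\mathcal H^s$ is isomorphic to $\ell^2_s$; equation \eqref{QHO} is then equivalent to a linear non-autonomous system
\[
\rmi\dot\xi=\Big(\mathrm{diag}(2m-1)+\epsilon X(\omega t)\Big)\xi,\qquad X_{mn}(\theta)=\la X(\cdot,\theta)h_n,h_m\ra_{L^2},
\]
and the theorem becomes a KAM reducibility statement for a small, unbounded, time-quasi-periodic perturbation of a diagonal operator. Following the scheme of \cite{LiangLuo2021}, I would run a Newton-type iteration: at each step a time-quasi-periodic Hermitian operator $S_\nu$ is produced as the solution of a homological equation, and the transformation $e^{\rmi S_\nu}$ removes the non-diagonal and $\theta$-dependent part of the current perturbation up to a quadratically smaller remainder.

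\textbf{Matrix elements: the main obstacle.} The crucial analytic ingredient is a sharp decay/smoothing estimate on $X_{mn}(\theta)$, which reduces to oscillatory integrals
\[
I_{mn}^{k}=\int_{\mathbb R}h_m(x)h_n(x)\la x\ra^\mu e^{\rmi k|x|^\beta}\,dx,
\]
and this is exactly the role of the paper's analogue of Lemma~1.7 of \cite{LiangLuo2021}. My plan is to insert the Liouville--Green/Langer uniform asymptotics for $h_m$ across the turning points $\pm\sqrt{2m-1}$, split $\mathbb R$ into the classically forbidden region (exponential decay), the Airy transition zone, and the oscillating interior, and then evaluate each piece by non-stationary or stationary phase. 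In the oscillating region the phase is of type $\pm\sqrt{2m-1}\,x\pm\sqrt{2n-1}\,x\pm k|x|^\beta+\text{lower order}$, whose critical points are degenerate because $\beta>1$; this degeneracy is the new difficulty flagged in the abstract, and the three regimes of $l_*=l(\beta,\mu)$ in \eqref{lbetamu} correspond to three different competitions among the Airy-zone contribution, the $\la x\ra^\mu$ growth, and the gain from the degenerate stationary points. The target bound is $|X_{mn}(\theta)|+|\partial_\omega X_{mn}(\theta)|\lesssim(m+n)^{-2l_*}(1+|m-n|)^{-N}$ for any $N\geq 0$, together with analyticity in $\theta\in\mathbb T^n_\sigma$. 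I expect this to be by far the most delicate step.

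\textbf{KAM iteration and homological equation.} With such bounds in hand, I fit the perturbation into a tame class of $\theta$-analytic operator-valued maps with smoothing $2l_*$. At step $\nu$ the Hamiltonian is $T+N_\nu(\omega)+P_\nu(\omega,\omega t)$, with $N_\nu$ diagonal and $P_\nu$ small, and the homological equation
\[
\rmi\,\omega\cdot\partial_\theta S_\nu+[T+N_\nu,S_\nu]=P_\nu-\langle P_\nu\rangle_\theta
\]
becomes, in Fourier/Hermite coordinates, division by $\ko+\lambda^{(\nu)}_m-\lambda^{(\nu)}_n$. I impose the second Melnikov conditions
\[
\big|\ko+\lambda^{(\nu)}_m-\lambda^{(\nu)}_n\big|\geq\frac{\gamma_\nu(|m-n|+1)}{\la k\ra^\tau},\qquad(k,m,n)\neq(0,m,m),
\]
defining $D_\nu\subset D_{\nu-1}$. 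The smoothing $2l_*$ of the perturbation is what absorbs the $(|m-n|+1)$-loss (Kuksin-type compensation), so that $S_\nu$ stays bounded on $\mathcal H^{s'}$ and $e^{\rmi S_\nu}$ is unitary on $L^2$ and close to $\Id$ on $\mathcal H^{s'}$.

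\textbf{Convergence, $Q$, and measure.} A telescoping argument with geometric-series and quadratic-smallness estimates shows that $\Psi^{(\nu)}=e^{\rmi S_1}\cdots e^{\rmi S_\nu}$ converges to a unitary $\Psi^\infty_{\omega,\epsilon}(\theta)$, analytic on $\mathbb T^n_{\sigma/2}$, with the claimed $O(\epsilon^{1/3})$ bound, the exponent being the usual outcome of balancing truncation, analyticity loss, and convergence rate. The limit $N_\infty$ is diagonal, Hermitian, bounded, and inherits an extra $2l_*$ of smoothing beyond that of $X$ (one factor from the perturbation itself, one from an additional compensation at each averaging), yielding $Q=\epsilon^{-1}N_\infty$ with the claimed $\mathcal H^p\to\mathcal H^{p+4l_*}$ estimate; the same bound for $\partial_\omega Q$ follows because Lipschitz dependence on $\omega$ is propagated through the iteration. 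Finally, $D_0\setminus D_\epsilon$ is the union over $\nu,k,m,n$ of the resonant slabs excluded by the Melnikov conditions; summing their measures, using Lipschitz dependence of $\lambda^{(\nu)}_m$ on $\omega$, and optimizing the truncation thresholds $K_\nu,N_\nu$ against $l_*$ and $\tau$ yields the explicit bound $C\epsilon^{3l_*/(2(2l_*+5)(2l_*+1))}$.
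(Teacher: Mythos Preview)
Your overall architecture---Hermite basis, oscillatory-integral control of $\la X(\cdot,\theta)h_n,h_m\ra$, then KAM reducibility---matches the paper. But there is a real gap in the matrix-element step: your target
\[
|X_{mn}(\theta)|\lesssim (m+n)^{-2l_*}(1+|m-n|)^{-N}\quad\text{for every }N\ge0
\]
is stronger than what the paper proves and almost certainly unattainable. Multiplication by $\la x\ra^\mu e^{\rmi k|x|^\beta}$ is not a pseudodifferential operator in any good class: each commutator with $T=-\partial_{xx}+x^2$ picks up a factor $\sim|x|^{\beta-1}$ from differentiating the oscillatory phase, so iterated commutators grow rather than improve, and arbitrary off-diagonal decay in the Hermite basis cannot hold. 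The paper (Lemma~\ref{CriticalLemma}) establishes only $|X_{mn}|\le C(mn)^{-l_*}$ with \emph{no} off-diagonal factor, i.e.\ $P\in\mathcal M_{l_*}$ in the notation of Section~2, and that is exactly the input required by the abstract KAM Theorem~\ref{ReduTheorem} from \cite{LiangW2019}, which the paper then invokes as a black box after checking Hypotheses~A1--A2 (Lemmas~\ref{A1}--\ref{A2}).

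You have also inverted the role of the second Melnikov condition. The divisor bound $|\ko+\lambda_a-\lambda_b|\ge\kappa(1+|a-b|)$ does not cost a factor $(1+|m-n|)$ that the perturbation must absorb; it \emph{gains} one, so that from $P\in\mathcal M_{l_*}$ the solution $S$ of the homological equation lands in $\mathcal M_{l_*}^+$ and hence defines a bounded operator on $\ell_0^2$. This is precisely why the framework of \cite{GP2019,LiangW2019} needs no off-diagonal hypothesis on the initial perturbation. Relatedly, the $\mathcal H^p\to\mathcal H^{p+4l_*}$ bound on $Q$ has a simpler origin than your ``additional compensation at each averaging'': $Q=\epsilon^{-1}(N_\infty-N)$ is \emph{diagonal} with $|Q_{mm}|\le Cm^{-2l_*}$ (this is the content of $[N_\infty-N]_{l_*}^{D_\epsilon}\le C\epsilon$ in Theorem~\ref{ReduTheorem}(iii)), and a diagonal operator with such entries maps $\mathcal H^p$ to $\mathcal H^{p+4l_*}$ trivially.
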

 Consequently,  Theorem \ref{MainTheorem} follows in the considered range of parameters the $\mathcal H^1$ norms of the solutions are all bounded forever and the spectrum of the corresponding operator is pure point.\\ \indent
\subsection{Related results and a critical lemma}
\indent  In the following we recall some relevant reducibility results. For 1-D quantum harmonic oscillators(`QHO' for short) with periodic or quasi-periodic in time bounded perturbations  see  \cite{Com87}, \cite{EV} , \cite{GT2011}, \cite{LiangWangZQ2021},  \cite{WL2017} and   \cite{Wang2008} . \\
\indent In \cite{BG2001} Bambusi and Graffi proved the reducibility of 1-D Schr\"odinger equation with an unbounded time quasiperiodic perturbation in which  the potential grows  at infinity like $|x|^{2l}$  with  a real $l>1$ and the perturbation is bounded by $1+|x|^{\beta}$ with $\beta<l-1$. The  reducibility in the limiting case $\beta= l-1$ was proved by Liu and Yuan in \cite{LY2010}. Recently, the results in \cite{BG2001} and \cite{LY2010} have been improved by Bambusi in \cite{Bam2017, Bam2018}, in which he firstly obtained the reducibility results for 1-D QHO  with unbounded perturbations.  \\
\indent It seems that the reducibility method in \cite{Bam2017, Bam2018}  is hard to be applied for 1-D Schr\"odinger equations with the  unbounded oscillatory perturbations(see remark 2.7 in \cite{Bam2017}).
The authors  \cite{LiangLuo2021}, \cite{LW2020} solved this problem  by Langer's turning point and oscillatory integral estimates. We remark that the critical step in \cite{LiangLuo2021} is
to build  up a decay estimate of  the integral  $\int_{\R}\la x\ra^{\mu} e^{{\rm i} kx} h_m(x) \overline{{h}_n(x)}dx$, in which the phase functions of oscillatory integral are
$\phi_{mn}(x): = \zeta_m(x) -\zeta_n(x)+kx,$
where 
$\zeta_m(x) =\int_{X_m}^x\sqrt{\lambda_m-t^2} dt$ with $X_m=\sqrt{2m-1}=\sqrt{\lambda_m}$.  \\
\indent Comparing with  \cite{LiangLuo2021},  in this paper 
the phase functions $\Psi_{mn}(x): = \zeta_m(x) -\zeta_n(x)+kx^{\beta}$ with $\beta>1$ are more degenerate.  For $1<\beta\leq 2$, we use a similar method as \cite{LiangLuo2021}. 
The most difficult part as \cite{LiangLuo2021} is the integral  
$\int_{X_m^{\frac23}}^{X_m-X_m^{\nu_2}}\la x\ra^\mu e^{{\rm i}kx^{\beta}}h_m(x)\overline{h_n(x)}dx$  where 
$\nu_2=1-\frac{\beta}{3}$ for $1<\beta<2$ and $\nu_2=\frac59$ when $\beta=2$.  We have to discuss  different cases in order  to obtain a suitable lower bound of the derivatives of the phase function. 
 For $\beta>2$ we find a new simple proof which follows from 
Corollary 3.2  in \cite{KT2005}, Lemma \ref{oscillatory integral lemma} and a straightforward computation. 
As Lemma 1.7 in \cite{LiangLuo2021}  we have the following. 
\begin{lemma}\label{CriticalLemma}
Assume $h_m(x)$ satisfies \eqref{HermiteFunction}. For any $k\neq 0$,
\[
\left|\int_{\R}\la x\ra^{\mu} e^{{\rm i} k|x|^{\beta}}h_m(x) \overline{{h}_n(x)}dx\right| \leq C \cdot C_{k,\beta}(mn)^{-l(\beta,\mu)},\quad m,n\ge1
\]
for some absolute constant $C>0$, where $\mu\ge0, \beta>1$,  $l(\beta,\mu)$ defined in (\ref{lbetamu}) and 
	\begin{gather*}
	C_{k,\beta}=
\begin{cases}
         |\beta(\beta-1)(\beta-2)k|^{-\frac13}\vee |k|^{-1}\vee |k|^\frac1{4-2\beta},& 1<\beta<2,\\
	|k|^{-1}\vee1,&\beta=2,\\
	|\beta k|^{-1}\vee 1,&\beta> 2.
\end{cases}\label{const}
	\end{gather*} 
\end{lemma}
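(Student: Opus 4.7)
My plan is to adapt the Langer turning-point plus oscillatory-integral scheme from Lemma 1.7 of \cite{LiangLuo2021} to the more degenerate phase $kx^\beta$. By the parity of $h_m$ it suffices to estimate the integral over $[0,\infty)$ with $|x|^\beta$ replaced by $x^\beta$; assume $m\le n$, with turning points $X_m=\sqrt{2m-1}\le X_n=\sqrt{2n-1}$. I would cut $[0,\infty)$ into the usual Langer strata: a left region $[0,X_m^{2/3}]$; a bulk oscillatory region $[X_m^{2/3},X_m-X_m^{\nu_2}]$ where both $h_m,h_n$ obey the Bessel-$J_{1/3}$ Langer asymptotics; a transition window $[X_m-X_m^{\nu_2},X_m+X_m^{\nu_2}]$ where Airy/$J_{-1/3}$ asymptotics apply; an intermediate region $[X_m+X_m^{\nu_2},X_n-X_n^{\nu_2}]$ where $h_m$ decays and $h_n$ still oscillates; a transition window around $X_n$; and the exponentially decaying tail $x\ge X_n+X_n^{\nu_2}$. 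Following the hints in the introduction I take $\nu_2=1-\beta/3$ for $1<\beta<2$ and $\nu_2=5/9$ for $\beta=2$.

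On the non-bulk strata I would insert the standard quantitative Airy/Hankel bounds for $h_m,h_n$ and integrate the algebraic prefactors directly; after the given choice of $\nu_2$ these produce contributions bounded by $C_{k,\beta}(mn)^{-l_*}$. The heart of the argument is the bulk region, where Langer's formula writes $h_m(x)h_n(x)$ as a sum of four terms of the form $p(x)e^{\pm\rmi\zeta_m(x)\pm\rmi\zeta_n(x)}$, with $p(x)$ comparable to $(\lambda_m-x^2)^{-1/4}(\lambda_n-x^2)^{-1/4}$. Three of the four sign combinations produce a phase $\pm\zeta_m\pm\zeta_n+kx^\beta$ whose derivative is bounded below by $\sqrt{\lambda_m-x^2}+\sqrt{\lambda_n-x^2}+k\beta x^{\beta-1}$ and is killed by a single integration by parts. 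The delicate one is
\[
\Psi_{mn}(x)=\zeta_m(x)-\zeta_n(x)+kx^\beta,\qquad \Psi'_{mn}(x)=\sqrt{\lambda_m-x^2}-\sqrt{\lambda_n-x^2}+k\beta x^{\beta-1},
\]
which may vanish. Here I would partition the bulk according to the size of successive derivatives of $\Psi_{mn}$: where $|\Psi'_{mn}|\gtrsim|k|\beta x^{\beta-1}$ one integration by parts yields the $|k|^{-1}$ factor; otherwise one passes to
\[
\Psi'''_{mn}(x)=(\text{Hermite piece})+k\beta(\beta-1)(\beta-2)x^{\beta-3}
\]
and invokes van der Corput's third-derivative test to obtain the $|\beta(\beta-1)(\beta-2)k|^{-1/3}$ factor in the range $1<\beta<2$. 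The factor $|k|^{1/(4-2\beta)}$ arises in the low-$|k|$ regime, where the $kx^\beta$ term contributes negligibly to the oscillation and one uses a direct size estimate on a short subinterval whose length is tuned to $|k|$. The case $\beta=2$ is special because the cubic cancellation vanishes, forcing an additional splitting based on $|\Psi''_{mn}|$; this is exactly what pins $\nu_2=5/9$.

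For $\beta>2$ I would take the cleaner route indicated by the authors: on the bulk region the term $k\beta x^{\beta-1}$ dominates the Hermite contribution to $\Psi'_{mn}$, so Corollary 3.2 of \cite{KT2005} combined with Lemma \ref{oscillatory integral lemma} directly yields the factor $|\beta k|^{-1}$, and the remaining strata are bounded by the Airy/exponential pointwise estimates times their interval length. The principal obstacle is the case analysis in the bulk region for $1<\beta\le2$: one must partition according to a hierarchy of lower bounds on the derivatives of $\Psi_{mn}$ while carrying the weighted prefactor $\la x\ra^\mu(\lambda_m-x^2)^{-1/4}(\lambda_n-x^2)^{-1/4}$, and verify that summing all subregions and strata gives exactly the exponent $l_*=l(\beta,\mu)$ from (\ref{lbetamu}). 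The balance between the van der Corput gain, the blow-up of the prefactor near the turning points, and the length $X_m^{\nu_2}$ of the transition window is what determines $l_*$, so the whole argument is really a quantitative bookkeeping exercise once the correct case split on $\Psi_{mn}$ is in place.
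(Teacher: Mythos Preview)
Your overall scheme matches the paper's, but there is one concrete gap tied to an omission in your decomposition. The factor $|k|^{1/(4-2\beta)}$ does \emph{not} come from a low-$|k|$ tuning argument: since $4-2\beta>0$ for $1<\beta<2$, this is the term of $C_{k,\beta}$ that dominates for \emph{large} $|k|$. In the paper it appears only in the regime $X_n>2X_m$, which you do not isolate. There the bulk interval is $[0,X_m-X_m^{-1/3}]$ and one splits according to whether $k\le X_n^{2-\beta}/8$. If so, the phase derivative satisfies $g(x)=\sqrt{X_n^2-x^2}-\sqrt{X_m^2-x^2}-\beta kx^{\beta-1}\ge X_n/4$ and a single first-order van der Corput step suffices; if not, then $n\le C\,k^{2/(2-\beta)}$ is bounded in terms of $k$, so the \emph{trivial} bound $\bigl|\int\bigr|\le CX_m^\mu$ already yields $(mn)^{-l_*}\cdot|k|^{1/(4-2\beta)}$. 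Your proposed mechanism (a short subinterval of $k$-dependent length in the low-$|k|$ regime) would not produce this constant.

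More generally, the paper treats $X_n>2X_m$ and $X_m\le X_n\le 2X_m$ separately, with \emph{different} transition-window exponents near $X_m$ (namely $\nu_1=-1/3$ versus $\nu_2=1-\beta/3$ for $1<\beta<2$, and $\nu_1=2/3$ versus $\nu_2=5/9$ for $\beta=2$), and the oscillatory analysis on the bulk differs accordingly. A single uniform cut at $X_m\pm X_m^{\nu_2}$ will not balance correctly in the well-separated case; you need that second case split, and it is exactly where the $|k|^{1/(4-2\beta)}$ factor enters. A minor stylistic point: the paper uses the Hankel-function Langer form $\psi_1^{(m)}\sim(X_m^2-x^2)^{-1/4}e^{\rmi\zeta_m}f_m$, so the leading product carries a \emph{single} phase $\zeta_m-\zeta_n+kx^\beta$ plus a lower-order remainder $\psi_2^{(m)}$; your four-term $J_{1/3}$ expansion is equivalent but adds bookkeeping you do not need.
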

\begin{remark}
In fact  $l(1,\mu)= \frac{1}{12}-\frac{\mu}{4}$ has been proved in \cite{LiangLuo2021}. 
\end{remark}

\indent In the end we review some relative results. Eliasson-Kuksin \cite{EK2009} initiated to prove the reducibility for PDEs in high dimension. See  \cite{GP2019} and \cite{LiangW2019} for higher-dimensional QHO with bounded potential. The  first reducibility result for n-D QHO was proved   in \cite{BGMR2018} by Bambusi-Gr\'ebert-Maspero-Robert.  Towards other PDEs with unbounded perturbations see the reducibility results by Montalto \cite{Mon19}  for linear wave equations  on $\mathbb T^d$ and Bambusi, Langella and Montalto \cite{BLM18}  for transportation equations(\cite{FGiMP19}).  Feola and Gr\'ebert \cite{FGr19} set up a reducibility result for  a linear Schr\"odinger equation on the sphere  $S^n$  with unbounded potential(\cite{FGN19}).\\
 \indent The reducibility results usually imply the boundedness of Sobolev norms. Delort \cite{Del2014} constructed a $t^{s/2}$- polynomial growth for 1-D QHO with certain time periodic  perturbation(\cite{Mas2018}). Basing on a Mourre estimate,  Maspero  \cite{Mas2021} proved similar results for 1-D QHO and half - wave equation on $\T$ and the instability is stable in some sense. For a polynomial periodic or quasi-periodic perturbations relative with 1-D QHO we refer to \cite{BGMR2018},  \cite{GrYa2000},  \cite{LZZ2020}  and \cite{LLZ2022}. For 2-D QHO with perturbation which is decaying in $t$, Faou-Rapha\"el \cite{FaRa2020} constructed a solution whose $\mathcal{H}^1-$norm presents logarithmic growth with t. For 2-D Schr\"odinger operator Thomann \cite{Th2020}  constructed explicitly  a traveling wave whose Sobolev norm presents polynomial growth with $t$, based on the study in \cite{ScTh2020} for linear Lowest Landau equations(LLL) with a time-dependent potential.  There are also many literatures, e.g. \cite{BLM2021, BGMR2019, BM2019, Bou99a, Bou99b, FZ12, MR2017, WWM2008}, which are closely relative to the upper growth bound of the solution in Sobolev space. \\
 \indent Our article is organized as follows: in Section 2 we state the  reducibility theorem, i.e. Theorem \ref{ReduTheorem}.  In Section 3, through checking all the assumptions in Theorem \ref{ReduTheorem} we prove Theorem \ref{MainTheorem}.  In Section 4 we prove Lemma \ref{CriticalLemma} for $1<\beta\leq 2$ and the case  for $\beta>2$ is delayed in Section 5.   Some auxiliary lemmas are presented in the Appendix.

 \textbf{Notation:}  We use the notations $\N_0=\{0,1,2,\cdots\}$, $\N=\{1,2,\cdots\}$, $\mathbb T^n=\mathbb R^n/2\pi \mathbb Z^n$ and  $\mathbb T^n_\sigma=\{a+b {\rm i}\in\mathbb C^n/2\pi\mathbb Z^n:|b|<\sigma\}$.
For Hilbert spaces $\mathcal H_1,\mathcal H_2$ we denote by $\mathfrak L(\mathcal H_1,\mathcal H_2)$ the space of bounded linear operators from $\mathcal H_1$ to $\mathcal H_2$ and write $\mathfrak{L}(\mathcal H_1,\mathcal H_1)$ as $\mathfrak{L}(\mathcal H_1)$ for simplicity.

\section{A KAM theorem}
Following \cite{EK2010, GT2011} we introduce the KAM Theorem from \cite{LiangW2019} especially for 1-D case.\\
\subsection{Setting} 
 \indent
Linear spaces. For $p\ge0$ we define $X_p:=\ell_p^2\times\ell_p^2=\{\zeta=(\zeta_a=(\xi_a,\eta_a)\in\mathbb C^2)_{a\in\mathbb N},  \|\zeta\|_p<\infty\}$ with $\|\zeta\|_p^2=\sum\limits_{a\in\mathbb N}a^p(|\xi_a|^2+|\eta_a|^2)$.  We equip the space with the symplectic structure $\rmi\sum\limits_{a\in\mathbb N}d\xi_a\wedge\eta_a$. \\ \indent 
Infinite matrices. Denote by $\mathcal M_\alpha$ the set of infinite matrices $A:\mathbb N\times\mathbb N\mapsto\mathbb C$ with the norm $|A|_\alpha:=\sup\limits_{a,b\in\mathbb N}(ab)^\alpha|A_a^b|<\infty$. We also denote $\mathcal M_\alpha^+$ be the subspace of $\mathcal M_\alpha$ satisfying that an infinite matrix $A\in\mathcal M_\alpha^+$ if $ |A|_{\alpha+}:=\sup\limits_{a,b\in\mathbb N}(ab)^\alpha(1+|a-b|)|A_a^b|<\infty$. \\ \indent
In fact one can prove that for all $\alpha>0$, a matrix in $\mathcal M_\alpha^+$ defines a bounded operator on $\ell_0^2$. However, when $\alpha\in(0,\frac12)$, we can't insure that  $\mathcal M_{\alpha}\subset \mathfrak{L}(\ell_0^2,\ell_s^2)$ for any $s\in \mathbb R$. 
This means that  $Px$ makes no sense when  the perturbation  operator $P\in\mathcal M_\alpha$ and $x\in\ell_0^2$. 
Fortunately, from Lemma 2.1 in \cite{GP2019} or Lemma 2.2 in \cite{LiangW2019} one can show $\mathcal M_{\alpha}\subset\mathfrak{L}(\ell_1^2,\ell_{-1}^2)$  and thus  the reducibility in $\mathcal{H}^1$ can be built up  in Theorem \ref{MainTheorem} instead of $L^2$.\\
\indent
Parameters. In this paper $\omega$ will play the role of a parameter belonging to $D_0=[0,2\pi]^n$. All the constructed maps will depend on $\omega$ with $\mathcal C^1$ regularity. When a map is only defined on a Cantor subset of $D_0$ the regularity is understood in Whitney sense. \\ \indent
A class of quadratic Hamiltonians. Let $D\subset D_0,\alpha>0$ and $\sigma>0$. We denote by ${\mathcal M}_\alpha(D,\sigma)$ the set of mappings as $\mathbb T^n_\sigma\times D\ni(\theta,\omega)\mapsto Q(\theta,\omega)\in\mathcal M_\alpha$ which is real analytic on $\theta\in\mathbb T^n_\sigma$ and $\mathcal C^1$ continuous on $\omega\in D$. And we endow this space with the norm
$[Q]_\alpha^{D,\sigma}:=\sup\limits_{\substack{\omega\in D,|\Im\theta|<\sigma\\ |k|=0,1}}|\partial_\omega^kQ(\theta,\omega)|_\alpha.$
The subspace of $\mathcal M_\alpha(D,\sigma)$ formed by $F(\theta,\omega)$ such that $\partial_\omega^kF(\theta,\omega)\in\mathcal M_\alpha^+,|k|=0,1$, is denoted by $\mathcal M_\alpha^+(D,\sigma)$ and endowed with the norm $[F]_{\alpha+}^{D,\sigma}:=\sup\limits_{\substack{\omega\in D,|\Im\theta|<\sigma\\ |k|=0,1}}|\partial_\omega^kF(\theta,\omega)|_{\alpha+}$. Besides, the subspace of $M_\alpha(D,\sigma)$ that are independent of $\theta$ will be denoted by $\mathcal M_\alpha(D)$ and for $N\in\mathcal M_\alpha(D)$, 
$$[N]_\alpha^D:=\sup\limits_{\omega\in D,|k|=0,1}|\partial_\omega^kN(\omega)|_\alpha. $$

\subsection{The reducibility theorem}
In this section we present an abstract reducibility theorem for a quadratic Hamiltonian quasiperiodic in time of the form
\begin{equation}\label{Hamiltonian}
H(t,\xi,\eta)=\langle\xi,N\eta\rangle+\epsilon\langle\xi,P(\omega t)\eta\rangle,\quad(\xi,\eta)\in X_1\subset X_0,
\end{equation}
and the corresponding Hamiltonian system
\begin{equation}
\label{fangcheng2}
\begin{cases}
\dot\xi=-\rmi N\xi-\rmi\epsilon P^T(\omega t)\xi,\\
\dot\eta=\rmi N\eta+\rmi\epsilon P(\omega t)\eta,
\end{cases}
\end{equation}
where $N=\text{diag}\{\lambda_a,a\in\mathbb N\}$ satisfies the following spectrum assumptions:\\ \indent
\textbf{Hypothesis A1-Asymptotics}. There exist positive constants $c_0,c_1,c_2$ such that
\[
c_1a\ge\lambda_a\ge c_2a\text{ and }|\lambda_a-\lambda_b|\ge c_0|a-b|,~\forall~a,b\in\mathbb N.
\]
\indent
\textbf{Hypothesis A2-Second Melnikov condition in measure estimates}. There exist positive constants $\alpha_1,\alpha_2$ and $c_3$ such that the following holds: for each $0<\kappa<\frac14$ and $K>0$ there exists a closed subset $D'=D'(\kappa,K)\subset D_0$ with $\text{Meas}(D_0\setminus D')\le c_3K^{\alpha_1}\kappa^{\alpha_2}$ such that for all $\omega\in D',k\in\mathbb Z^n$ with $0<|k|\le K$ and $a,b\in\mathbb N$ we have $|k\cdot\omega+\lambda_a-\lambda_b|\ge\kappa(1+|a-b|)$.
\\ \indent Then we can state our reducibility results.
\begin{Theorem}\label{ReduTheorem}
Given a nonautonomous Hamiltonian \eqref{Hamiltonian}, we assume that $(\lambda_a)_{a\in\mathbb N}$ satisfies Hypothesis A1-A2 and $P(\theta)\in\mathcal M_{\alpha}(D_0,\sigma)$ with $\alpha,\sigma>0$. Let $\gamma_1=\max\{\alpha_1,n+3\}$ and $\gamma_2=\frac{\alpha\alpha_2}{2\alpha\alpha_2+5}$, then there exists $\epsilon_*>0$ such that for all $0\le\epsilon<\epsilon_*$ there are
\\ \indent
(i) a Cantor set $D_\epsilon\subset D_0$ with $\text{Meas}(D_0\setminus D_\epsilon)\le C\epsilon^{\frac{3\delta\alpha}{2\alpha+1}}$ for a $\delta\in(0,\frac{\gamma_2}{24})$;\\ \indent
(ii) a $\mathcal C^1$ family in $\omega\in D_\epsilon$(in Whitney sense), linear, unitary, analytically dependent on $\theta\in\mathbb T^n_{\sigma/2}$ and symplectic coordinate transformation $\Phi_\omega^\infty(\theta): X_0\mapsto X_0,(\omega,\theta)\in D_\epsilon\times\mathbb T^n_{\sigma/2}$, of the form 
\[
(\xi_+,\eta_+)\mapsto(\xi,\eta)=\Phi_\omega^\infty(\theta)(\xi_+,\eta_+)=(\overline M_\omega(\theta)\xi_+,M_\omega(\theta)\eta_+),
\]
where $\Phi_\omega^\infty(\theta)-\Id$ satisfies for $0\le s'\le1$
\[
\|\Phi_\omega^\infty(\theta)-\Id\|_{\mathfrak{L}(X_0,X_{2\alpha})},\|\Phi_\omega^\infty(\theta)-\Id\|_{\mathfrak{L} (X_{s'})}\le C\epsilon^{\frac13};
\]
\indent
(iii) a $\mathcal C^1$ family of autonomous quadratic Hamiltonian in normal forms
\[
H_\infty(\xi_+,\eta_+)=\langle\xi_+,N_\infty(\omega)\eta_+\rangle=\sum_{m\ge1}\lambda_m^\infty\xi_{+,m}\eta_{+,m},\quad\omega\in D_\epsilon,
\]
where $N_\infty(\omega)=\text{diag}\{\lambda^{\infty}_m,m\in \mathbb N \}$ is diagonal and is close to $N$ in the sense of
\[
[N_\infty(\omega)-N]_\alpha^{D_\epsilon}\le C\epsilon,
\]
such that 
\[
H(t,\Phi_\omega^\infty(\omega t)(\xi_+,\eta_+))=H_\infty(\xi_+,\eta_+),~t\in\mathbb R,~(\xi_+,\eta_+)\in X_1,~\omega\in D_\epsilon.
\]
\end{Theorem}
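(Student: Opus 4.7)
The plan is to carry out a standard super-exponentially convergent KAM iteration, in the spirit of \cite{EK2010, GT2011, LiangW2019}, adapted to the unbounded matrix class $\mathcal M_\alpha$. At the $n$-th step I would have a Hamiltonian $H_n(\theta,\xi,\eta) = \langle\xi, N_n(\omega)\eta\rangle + \epsilon_n\langle\xi, P_n(\theta,\omega)\eta\rangle$ with $N_n$ diagonal, $P_n \in \mathcal M_\alpha(D_n, \sigma_n)$ and $\epsilon_n\searrow 0$ super-exponentially. Parameters $\sigma_n\searrow\sigma/2$, $K_n\nearrow\infty$ (Fourier cutoff), $\kappa_n\searrow 0$ (small-divisor threshold) are chosen consistently with $\epsilon_n$, and Cantor sets $D_0\supset D_1\supset\cdots$ are progressively shrunk so as to enforce the second Melnikov conditions on the shifted frequencies.

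The heart of one KAM step is the resolution of the homological equation. I would look for a symplectic transformation $\Phi_n=\exp(\epsilon_n X_{F_n})$ generated by $F_n(\theta,\omega)\in\mathcal M_\alpha^+(D_{n+1},\sigma_{n+1})$ whose coefficients satisfy
\[
F_{n,ab}^{(k)}=\frac{P_{n,ab}^{(k)}}{\rmi\bigl(k\cdot\omega + \lambda_a^{(n)} - \lambda_b^{(n)}\bigr)},\qquad 0<|k|\le K_n,
\]
with the complementary resonant block ($k=0$ and $a=b$) absorbed into the new normal form $N_{n+1}=N_n+\epsilon_n\langle P_n\rangle_{\mathrm{res}}$. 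Hypothesis A2, applied at the perturbed eigenvalues $\lambda_a^{(n)}$ (which by the induction hypothesis differ from $\lambda_a$ by $O(\sum_j\epsilon_j)=O(\epsilon)$), yields a set $D_{n+1}$ on which $|k\cdot\omega + \lambda_a^{(n)} - \lambda_b^{(n)}|\ge\kappa_n(1+|a-b|)$; the crucial factor $(1+|a-b|)$ promotes $F_n$ from $\mathcal M_\alpha$ into the finer $\mathcal M_\alpha^+$ with $[F_n]_{\alpha+}\lesssim \kappa_n^{-1}[P_n]_\alpha$. Cauchy estimates in $\theta$ together with the fact that $\mathcal M_\alpha^+$ behaves as a Banach algebra then yield the new perturbation $\epsilon_{n+1}P_{n+1}$ with $\epsilon_{n+1}\lesssim \kappa_n^{-1}\epsilon_n^{2} + \epsilon_n e^{-K_n(\sigma_n-\sigma_{n+1})}$, which after optimizing $K_n, \kappa_n, \sigma_n$ gives quadratic-type decay $\epsilon_n\to 0$.

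For the limit, I would show that the partial compositions $\Phi_1\circ\cdots\circ\Phi_n$ converge on $\mathbb T^n_{\sigma/2}\times D_\epsilon$ to a unitary symplectic transformation $\Phi_\omega^\infty$. Using the embedding $\mathcal M_\alpha^+\hookrightarrow\mathfrak L(X_0, X_{2\alpha})$ (cf.\ Lemma 2.1 in \cite{GP2019} or Lemma 2.2 in \cite{LiangW2019}), one inherits $\|\Phi_\omega^\infty-\Id\|_{\mathfrak L(X_0, X_{2\alpha})}\lesssim \sum_n\epsilon_n\lesssim\epsilon^{1/3}$; unitarity on $X_0$ together with interpolation yields the corresponding bound in $\mathfrak L(X_{s'})$ for $0\le s'\le 1$. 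Convergence of the diagonal entries $\lambda_m^{(n)}\to\lambda_m^\infty$ follows from $\sum\epsilon_n[P_n]_\alpha<\infty$, giving $[N_\infty-N]_\alpha^{D_\epsilon}\le C\epsilon$. The measure of the excluded set is estimated by $\sum_n\text{Meas}(D_n\setminus D_{n+1})\le c_3\sum_n K_n^{\alpha_1}\kappa_n^{\alpha_2}$, and by choosing $\kappa_n$ and $K_n$ as suitable powers of $\epsilon_n$ the geometric sum is bounded by $C\epsilon^{3\delta\alpha/(2\alpha+1)}$ for any $\delta<\gamma_2/24$.

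The main obstacle is the functional-analytic subtlety that, for $\alpha\in(0,\tfrac12)$, matrices in $\mathcal M_\alpha$ are only bounded from $\ell_1^2$ to $\ell_{-1}^2$ and do \emph{not} act continuously on $\ell_0^2$; the iterates $\Phi_n$ would therefore fail to map $X_0$ to itself unless the generators are obtained in the finer class $\mathcal M_\alpha^+$. Converting the second Melnikov lower bound $\kappa_n(1+|a-b|)$ into an $\mathcal M_\alpha^+$ bound on $F_n$, and then propagating this "+" structure along the iteration in the presence of eigenvalue drifts $\lambda_a^{(n)}-\lambda_a$, is the only genuinely delicate point. Once that stability is established, everything else — convergence of the transformations, closeness of $N_\infty$ to $N$, and the Whitney-$\mathcal C^1$ dependence on $\omega$ — reduces to careful bookkeeping along the KAM scheme.
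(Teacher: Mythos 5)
Your proposal follows essentially the same route as the source of this theorem: the paper does not prove Theorem \ref{ReduTheorem} itself but imports it from \cite{LiangW2019} (following \cite{EK2010, GT2011, GP2019}), where the proof is exactly the KAM scheme you describe — homological equation solved in the finer class $\mathcal M_\alpha^+$ via the second Melnikov condition with the $(1+|a-b|)$ gain, quadratic iteration, and the $\mathfrak L(X_0,X_{2\alpha})$ embedding to make sense of the limit transformation on $X_0$. Your identification of the $\alpha\in(0,\tfrac12)$ unboundedness issue and the need to propagate the ``$+$'' structure as the one delicate point matches the emphasis in Section 2 of the paper, so the plan is sound and consistent with the cited proof.
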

\section{Application to the quantum harmonic oscillator}
\indent In this section we will prove Theorem \ref{MainTheorem} by applying Theorem \ref{ReduTheorem} to the original equation \eqref{QHO}.  Following the strategies in \cite{EK2009}, we expand $u$ on the Hermite basis $(h_m)_{m\ge1}$ as well as $\bar u$ by the following formula
\[
u=\sum_{m\ge1}\xi_mh_m,\quad\bar{u}=\sum_{m\ge1}\eta_m\bar{h}_m.
\]
Therefore the equation \eqref{QHO} is equivalent to a nonautonomous Hamiltonian system 
\begin{equation}\label{Hamiltonianapp}
\begin{cases}
\dot\xi_m=-\rmi\frac{\partial H}{\partial\eta_m}=-\rmi(2m-1)\xi_m-
\rmi\epsilon\left(P^T(\omega t)\xi\right)_m,\\
\dot\eta_m=\rmi\frac{\partial H}{\partial\xi_m}=\rmi(2m-1)\eta_m+\rmi\epsilon\left(P(\omega t)\eta\right)_m,
\end{cases}\quad m\ge1,
\end{equation}
where 
\begin{equation*}
H(t,\xi,\eta)=\langle\xi,N\eta\rangle+\epsilon\langle\xi,P(\omega t)\eta\rangle,\quad(\xi,\eta)\in X_1\subset X_0,
\end{equation*}
and $N=\text{diag}\{2m-1,m\ge1\}$ and 
\begin{equation}\label{DefinitionP}
P_m^n(\omega t)=\sum\limits_{k\in \Lambda} a_k(\omega t) \int_{\mathbb R} \langle x\rangle^\mu\sin k|x|^\beta h_m(x)\overline{h_n(x)}dx +
\sum\limits_{k\in \Lambda} b_k(\omega t) \int_{\mathbb R} \langle x\rangle^\mu\cos  k |x|^\beta h_m(x)\overline{h_n(x)}dx,
\end{equation}
where the frequencies $\omega\in D_0=[0,2\pi]^n$ are  the external parameters. 
\\ \indent
The spectrum assumptions can be easily checked by the following two lemmas. 
\begin{lemma}\label{A1}
When $\lambda_a=2a-1,a\in\mathbb N$, Hypothesis A1 holds true with $c_0=c_2=1$ and $c_1=2$.
\end{lemma}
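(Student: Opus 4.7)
The statement is an elementary verification of the three inequalities in Hypothesis A1 for the explicit sequence $\lambda_a = 2a-1$, so the plan is simply to substitute and check each one separately.

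First I would verify the upper asymptotic bound $c_1 a \ge \lambda_a$: since $\lambda_a = 2a - 1 \le 2a$ for all $a \in \mathbb N$, this holds with $c_1 = 2$. Next I would verify the lower asymptotic bound $\lambda_a \ge c_2 a$: since $2a - 1 \ge a$ is equivalent to $a \ge 1$, which is exactly the range of $a$, this holds with $c_2 = 1$. Finally I would verify the separation condition: for any $a, b \in \mathbb N$,
\[
|\lambda_a - \lambda_b| = |(2a-1) - (2b-1)| = 2|a - b| \ge |a - b|,
\]
giving $c_0 = 1$ (indeed one could take $c_0 = 2$, but the stated constant $c_0 = 1$ suffices).

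There is no real obstacle; each bound reduces to a one-line inequality. The only mild care needed is to note that $\mathbb N$ starts at $1$ in this paper's convention (as declared in the \textbf{Notation} paragraph), which is precisely what makes the lower bound $\lambda_a \ge a$ hold without modification.
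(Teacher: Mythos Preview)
Your verification is correct. The paper does not give an explicit proof of this lemma---it is stated without proof as one of two lemmas that ``can be easily checked''---and your direct substitution into each of the three inequalities of Hypothesis~A1 is exactly the intended argument.
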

\begin{lemma}\label{A2}
	When $\lambda_a=2a-1,a\in\mathbb N$, Hypothesis A2 holds true with $\alpha_1=n+1,\alpha_2=1,c_3=c(n)$ and $ D_0=[0,2\pi]^n$,
\[
 D'=\{\omega\in[0,2\pi]^n:|k\cdot\omega+j|\ge\kappa(1+|j|),~\forall~j\in\mathbb Z,~k\in\mathbb Z^n\setminus\{0\}\}.
\]
\end{lemma}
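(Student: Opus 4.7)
The plan is to verify Hypothesis A2 directly from the arithmetic spectrum $\lambda_a=2a-1$. I would first substitute $\lambda_a-\lambda_b=2(a-b)$ into the second Melnikov bound: putting $j=2(a-b)\in\mathbb Z$ gives $1+|a-b|=1+|j|/2\le 1+|j|$, so the condition $|k\cdot\omega+j|\ge\kappa(1+|j|)$ defining $D'$ in the statement — quantified over all $j\in\mathbb Z$ and all $k\in\mathbb Z^n\setminus\{0\}$ with (implicitly) $0<|k|\le K$ — is strictly stronger than the required bound $|k\cdot\omega+\lambda_a-\lambda_b|\ge\kappa(1+|a-b|)$. Hence verifying A2 reduces to a measure estimate on the complement of this truncated set inside $D_0=[0,2\pi]^n$.

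The core step is the usual slab estimate. For each $(k,j)\in(\mathbb Z^n\setminus\{0\})\times\mathbb Z$, I would consider the resonant slab
\[
R_{k,j}:=\{\omega\in[0,2\pi]^n:|k\cdot\omega+j|<\kappa(1+|j|)\}
\]
and apply Fubini by slicing along the coordinate direction in which some component of $k$ achieves $|k|_\infty$. Taking $|k|=|k|_\infty$, the slice has length at most $2\kappa(1+|j|)/|k|$, which combined with the $(2\pi)^{n-1}$ factor from the transverse directions gives $\text{Meas}(R_{k,j})\le C(n)\kappa(1+|j|)/|k|$. Because $|k\cdot\omega|\le 2\pi n|k|$ on $D_0$, the slab is empty unless $|j|\le 2\pi n|k|+\kappa(1+|j|)$, which for $\kappa<1/4$ forces $|j|\le C(n)|k|$.

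Summing the slab bound over this finite range of $j$ yields $\sum_{|j|\le C(n)|k|}C(n)\kappa(1+|j|)/|k|\le C(n)\kappa|k|$ per $k$, and then summing over $0<|k|\le K$ using $\#\{k:|k|_\infty=\ell\}\lesssim\ell^{n-1}$ produces
\[
\text{Meas}(D_0\setminus D')\le\sum_{\ell=1}^{K}C(n)\kappa\,\ell\cdot\ell^{n-1}\le c(n)\kappa K^{n+1},
\]
which is exactly the Hypothesis A2 bound with $\alpha_1=n+1$, $\alpha_2=1$, $c_3=c(n)$. The argument is entirely standard small-divisor bookkeeping; the only points needing attention are the truncation $|k|\le K$ (so that the outer sum converges) and choosing the $|\cdot|_\infty$ norm on $k$ so that Fubini gives the cleanest constant. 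I do not anticipate any genuine obstacle.
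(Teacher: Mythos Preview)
Your argument is correct and is exactly the standard small-divisor measure estimate one expects here; the reduction $\lambda_a-\lambda_b=2(a-b)=:j$ with $1+|a-b|\le 1+|j|$, the Fubini slab bound $\text{Meas}(R_{k,j})\le C(n)\kappa(1+|j|)/|k|$, the cutoff $|j|\le C(n)|k|$ forced by $\kappa<\tfrac14$, and the double sum giving $c(n)\kappa K^{n+1}$ are all fine. The paper itself gives no proof of this lemma---it simply states it as something that ``can be easily checked''---so your write-up supplies precisely the routine verification the authors omit.
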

\indent
The following lemma is a direct corollary of Lemma \ref{CriticalLemma}.
\begin{lemma}\label{Malpha}
Assume that $a_k(\theta)$ and $b_k(\theta)$ are  analytic on $\mathbb T^n_\sigma$ for any nonzero $k\in \Lambda$ with $\sigma>0$ and $\beta>1$ and $\mu$ satisfies \eqref{muRange}, then there exists $\alpha=l(\beta,\mu)>0$ such that the matrix function $P(\theta)$ defined by \eqref{DefinitionP} is analytic from $\mathbb T^n_\sigma$ into $\mathcal M_\alpha$.
\end{lemma}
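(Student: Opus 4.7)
The plan is to reduce the claim to a direct application of Lemma \ref{CriticalLemma} combined with the analyticity of the coefficient functions $a_k(\theta), b_k(\theta)$. First I observe that the range condition \eqref{muRange} on $\mu$ is precisely what makes $l(\beta,\mu) > 0$ in all three regimes of $\beta$, so setting $\alpha := l(\beta,\mu)$ one has a legitimate positive index for the matrix space $\mathcal M_\alpha$.

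Next, for each fixed pair $(m,n)\in \N\times \N$ and each $k\in\Lambda$, the integrals
\[
I_k^{\sin}(m,n) := \int_{\mathbb R}\la x\ra^\mu \sin(k|x|^\beta)\, h_m(x)\overline{h_n(x)}\,dx,\qquad I_k^{\cos}(m,n) := \int_{\mathbb R}\la x\ra^\mu \cos(k|x|^\beta)\, h_m(x)\overline{h_n(x)}\,dx
\]
do not depend on $\theta$. Writing sine and cosine in terms of $e^{\pm \rmi k|x|^\beta}$ and invoking Lemma \ref{CriticalLemma} (applied to both signs of $k$, which gives the same bound), I obtain
\[
\big|I_k^{\sin}(m,n)\big|+\big|I_k^{\cos}(m,n)\big| \;\le\; C\,C_{k,\beta}\,(mn)^{-l(\beta,\mu)},\qquad m,n\ge 1.
\]
Since $\Lambda$ is finite and each $k\in\Lambda$ is nonzero, the constant $M := \sum_{k\in\Lambda} C_{k,\beta}$ is finite.

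Now $P_m^n(\theta) = \sum_{k\in\Lambda} \bigl(a_k(\theta) I_k^{\sin}(m,n) + b_k(\theta) I_k^{\cos}(m,n)\bigr)$. By analyticity, $A := \sup_{k\in\Lambda}\sup_{\theta\in\mathbb T^n_\sigma}\bigl(|a_k(\theta)| + |b_k(\theta)|\bigr) < \infty$, so for every $\theta\in\mathbb T^n_\sigma$
\[
(mn)^{\alpha}\big|P_m^n(\theta)\big| \;\le\; C\,A\,M,\qquad m,n\ge 1.
\]
Taking the supremum over $m,n$ shows $|P(\theta)|_\alpha \le CAM$ uniformly in $\theta\in\mathbb T^n_\sigma$, so $P$ maps $\mathbb T^n_\sigma$ into $\mathcal M_\alpha$ with a uniform bound.

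Finally, analyticity of $\theta\mapsto P(\theta)$ as a map into the Banach space $\mathcal M_\alpha$ follows because each entry $P_m^n(\theta)$ is a finite linear combination (with $\theta$-independent coefficients) of the analytic functions $a_k(\theta)$ and $b_k(\theta)$ on $\mathbb T^n_\sigma$, and the estimate above provides a uniform majorant so that termwise differentiation in $\theta$ remains inside $\mathcal M_\alpha$. The only real content of the lemma is the decay estimate supplied by Lemma \ref{CriticalLemma}; everything else is bookkeeping. The main obstacle in the paper is therefore not this lemma itself but the underlying Lemma \ref{CriticalLemma}, whose proof is deferred to Sections 4–5 and requires the Langer turning point analysis and oscillatory integral estimates for the degenerate phases $\Psi_{mn}(x)=\zeta_m(x)-\zeta_n(x)+kx^\beta$.
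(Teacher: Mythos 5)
Your proof is correct and is exactly the argument the paper intends: the paper states this lemma with no proof at all, calling it ``a direct corollary of Lemma \ref{CriticalLemma}'', and your write-up (entrywise application of Lemma \ref{CriticalLemma} to $e^{\pm\rmi k|x|^\beta}$, finiteness of $\Lambda$, boundedness of the coefficients, and analyticity of a finite sum of scalar analytic functions times fixed matrices in $\mathcal M_\alpha$) is precisely the omitted bookkeeping. The only point worth flagging is that your uniform bound over all of $\mathbb T^n_\sigma$ tacitly uses that $a_k,b_k$ are bounded on the open strip, which is the standard convention here and is what the norm $[\,\cdot\,]_\alpha^{D_0,\sigma}$ requires anyway.
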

\noindent Proof of Theorem \ref{MainTheorem}. Expanding the Hermite basis $(h_m)_{m\ge1}$, the Schr\"odinger equation \eqref{QHO} becomes Hamiltonian system \eqref{Hamiltonianapp}, which is the form of equation \eqref{fangcheng2} with $\lambda_a=2a-1$. By lemmas above, we can apply Theorem \ref{ReduTheorem} to \eqref{Hamiltonianapp} with $\gamma_1=n+3,\gamma_2=\frac\alpha{2\alpha+5}$ and $\delta=\frac{\gamma_2}{48}$. This follows Theorem \ref{MainTheorem}.
\\ \indent
More precisely, in new variables given in Theorem \ref{ReduTheorem}, $(\xi,\eta)=(\overline M_\omega\xi_+,M_\omega\eta_+)$, system \eqref{Hamiltonianapp} is conjugated into an autonomous system of the form:
\[
\begin{cases}
\dot\xi_{+,a}=-\rmi\lambda_a^\infty(\omega)\xi_{+,a},\\
\dot\eta_{+,a}=\rmi\lambda_a^\infty(\omega)\eta_{+,a},
\end{cases}\quad a\in\mathbb N.
\]
Therefore the solution subject to the initial datum $(\xi_+(0),\eta_+(0))$ reads
\[
(\xi_+(t),\eta_+(t))=(e^{-\rmi tN_\infty}\xi_+(0),e^{\rmi tN_\infty}\eta_+(0)),\quad t\in\mathbb R,
\]
where $N_\infty=\text{diag}\{\lambda^{\infty}_a, \ a\geq 1\}$. Then the solution of \eqref{QHO} with the initial datum $u_0(x)=\sum_{a\ge1}\xi_a(0)h_a(x)\in\mathcal H^1$ is formulated by $u(t,x)=\sum_{a\ge1}\xi_a(t)h_a(x)$ with $\xi(t)=\overline M_\omega(\omega t)e^{-\rmi tN_\infty}M_\omega^T(0)\xi(0)$, where we use the fact $\left(\overline M_\omega\right)^{-1}=M_\omega^T$ since $M$ is unitary.
\\ \indent
Now we define the coordinate transformation $\Psi_\omega^{\infty}(\theta)$ by
\[
\Psi_\omega^{\infty}(\theta)\left(\sum_{a\ge1}\xi_ah_a(x)\right):=\sum_{a\ge1}\left(M_\omega^T(\theta)\xi\right)_ah_a(x)=\sum_{a\ge1}\xi_{+,a}h_a(x).
\]
Then we have $u(t,x)$ satisfies \eqref{QHO} if and only if $v(t,x)=\Psi_\omega^{\infty}(\omega t)u(t,x)$ satisfies the autonomous equation $\rmi\partial_tv=-v_{xx}+x^2v+\epsilon Q(v)$, where 
\[
\epsilon Q\left(\sum_{a\ge1}\xi_ah_a(x)\right)=\sum_{a\ge1}((N_\infty-N_0)\xi)_ah_a(x)=\sum_{a\ge1}(\lambda_a^\infty-\lambda_a)\xi_ah_a(x).
\]
The rest estimates are standard (see Lemma 3.4 in \cite{LiangLuo2021} for the details). \qed

\section{Proof of Lemma \ref{CriticalLemma} when  $1<\beta\leq 2$}\label{S4}
For reader's convenience, we will use the notations in \cite{LiangLuo2021}.  In the whole section we will always suppose $\mu\geq 0$ and don't point it out in the following lemmas. \\
\indent The eigenfunction of the quantum oscillator operator $T$ is 
$h_n(x)={(n! 2^n \pi^{\frac12})^{-\frac12}}{e^{-\frac12 x^2} H_n(x)}$, where $H_n(x)$ is the $n-th$ Hermite polynomial. Since $h_n(x)$ is an even (or odd) function when $n$ is odd (or even), we only need to estimate 
\begin{equation}\label{int0infty}
	\int_0^{+\infty} \la x\ra^{\mu} e^{{\rm i}kx^{\beta}}h_m(x)\overline{h_n(x)}dx,\quad 1\leq m\leq n.
\end{equation}
By Lemma 4.4 and Remarks 4.5, 4.6 in \cite{LiangLuo2021}, when $m>m_0$,
\begin{eqnarray*}
h_m(x)&=&
(\lambda_m-x^2)^{-\frac{1}{4}}(\frac{\pi\zeta_m}{2})^{\frac{1}{2}}H_{\frac{1}{3}}^{(1)}(\zeta_m) +(\lambda_m-x^2)^{-\frac{1}{4}}(\frac{\pi\zeta_m}{2})^{\frac{1}{2}}H_{\frac{1}{3}}^{(1)}(\zeta_m)O (\frac{1}{\lambda_m}) \nonumber\\
&:=& \psi^{(m)}_1(x) +\psi^{(m)}_2(x),
\end{eqnarray*}
where $\zeta_m(x)=\displaystyle\int_{X_{m}}^x\sqrt{\lambda_m-t^2}dt$ with $X_m^2=\lambda_m(X_m>0)$.
Otherwise, when $m\le m_0$, then 
$h_m(x)=\psi_1^{(m)}(x)+\psi_2^{(m)}(x)$ for $x>2X_{m_0}$, where $\psi_1^{(m)}(x)=(\lambda_m-x^2)^{-\frac14}(\frac{\pi\zeta_m}{2})^\frac12 H_\frac13^{(1)}(\zeta_m)$ and $|\psi_2^{(m)}(x)|\le\frac{C}{x^2}|\psi_1^{(m)}(x)|$. Following the same strategies in \cite{LiangLuo2021} we distinguish 3 cases to estimate (\ref{int0infty}): \\
I. $m,n< C_0: = 2^8m_0^3$;\\
II. $m\le m_0$ and $n\ge C_0$;\\
III. $m,n>m_0$.
\subsection{the estimates for Case I and Case II}

\begin{lemma}
When $n, m<C_0$,
$$\left|\int_0^{+\infty} \la x\ra^{\mu} e^{{\rm i}kx^{\beta}}h_m(x)\overline{h_n(x)}dx\right|\le \frac{C}{(mn)^{\frac14(\frac{\beta}{6}-\mu)}}. $$
\end{lemma}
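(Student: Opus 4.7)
The key observation is that Case I restricts both indices to the bounded range $m,n<C_0=2^8m_0^3$, which turns the claim into a purely finite-index estimate. Since hypothesis \eqref{muRange} forces $\frac{\beta}{6}-\mu>0$ throughout $1<\beta\leq 2$, the right-hand side satisfies $(mn)^{-\frac14(\frac{\beta}{6}-\mu)}\geq C_0^{-\frac12(\frac{\beta}{6}-\mu)}$, an absolute positive constant. Hence it suffices to bound the modulus of the integral by an absolute constant, uniformly in the finite range $1\leq m,n<C_0$.

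To produce such a uniform bound, the plan is to discard the oscillation via $|e^{{\rm i}kx^\beta}|=1$ and then apply the Cauchy--Schwarz inequality together with $\|h_n\|_{L^2}=1$:
\[
\left|\int_0^{+\infty}\la x\ra^\mu e^{{\rm i}kx^\beta}h_m(x)\overline{h_n(x)}\,dx\right|\leq\int_0^{+\infty}\la x\ra^\mu|h_m(x)||h_n(x)|\,dx\leq\|\la x\ra^\mu h_m\|_{L^2(\R)}.
\]
Each Hermite function $h_m$ is a Schwartz function, so the weighted $L^2$-norm on the right is finite for every individual $m$; taking the maximum over the finite set $\{m:1\leq m<C_0\}$ therefore produces an absolute constant depending only on $C_0$ and $\mu$, as required.

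The main obstacle: there is essentially no genuine obstacle in Case I -- the boundedness of both indices reduces the assertion to a compactness argument over finitely many Schwartz functions, and no quantitative decay in $m$ or $n$ needs to be extracted. The real difficulties of Lemma \ref{CriticalLemma} arise in Cases II and III, where one must exploit the Langer expansion $h_m=\psi_1^{(m)}+\psi_2^{(m)}$ together with precise oscillatory integral bounds in order to obtain the algebraic decay $(mn)^{-l(\beta,\mu)}$.
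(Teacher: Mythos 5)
Your proposal is correct. The reduction you make at the outset -- since $1\le m,n<C_0$ and the exponent $\frac14(\frac\beta6-\mu)$ is positive under \eqref{muRange}, the right-hand side is bounded below by a constant depending only on $C_0$, so an absolute bound on the integral suffices -- is exactly the point of Case I, and it is also what the paper's proof implicitly relies on. Where you differ is in how the uniform bound is produced. The paper splits the integral at a fixed threshold $X_0$ depending only on $C_0$: on $[0,X_0]$ it uses $\la x\ra^\mu\le\la X_0\ra^\mu$ together with Cauchy--Schwarz and $\|h_m\|_{L^2}=\|h_n\|_{L^2}=1$, and on $[X_0,\infty)$ it invokes the Langer/Hankel representation and the exponential decay $|\zeta_m|\ge x-X_0$ beyond the turning point to get $\int_{X_0}^\infty\la x\ra^\mu e^{-2(x-X_0)}dx\le Ce^{2X_0}$. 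You instead apply Cauchy--Schwarz globally, bound the integral by $\|\la x\ra^\mu h_m\|_{L^2(\R)}$, note this is finite for each fixed $m$ because $h_m$ is Schwartz, and take the maximum over the finitely many admissible $m$. Your argument is more elementary (no turning-point asymptotics needed) at the price of being a pure compactness argument over the finite index set; the paper's version yields a constant with an explicit dependence on $X_0$ and, unlike yours, would survive without the restriction to finitely many indices. For the lemma as stated both are perfectly adequate.
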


\begin{proof}
When $x\le X_0$, from $\rm{H\ddot{o}lder}$ inequality and $n,m<C_0$, we have 
$$\left|\int_0^{X_0} \la x\ra^{\mu} e^{{\rm i}kx^{\beta}}h_m(x)\overline{h_n(x)}dx\right|\le X_0^\mu\le  \frac{C}{(mn)^{\frac14(\frac{\beta}{6}-\mu)}}.$$
where $X_0$ is a positive constant depending on $C_0$ only. 
When $x>X_0$, $|X_m^2-x^2|^{-\frac14}<1$, we have $\left|\sqrt{\frac{\pi \zeta_m}{2}}H^{(1)}_\frac{1}{3}(\zeta_m)\right|\leq e^{-\left| \zeta_m\right|}$ by  Lemma 5.4 in \cite{LiangLuo2021}.  By Lemma 5.5 in \cite{LiangLuo2021} we have 
$\left|\zeta_m\right| \geq\frac{2\sqrt2}{3}X_m^\frac12(x-X_m)^\frac32\geq x-X_0$ for  $x> X_0$. 
Thus
\begin{align*}
   \left|\int_{X_0}^{+\infty} \la x\ra^{\mu} e^{{\rm i}kx^{\beta}}h_m(x)\overline{h_n(x)}dx\right|
\le \int_{X_0}^{+\infty} \la x\ra^{\mu}e^{-2(x-X_0)}dx\le Ce^{2X_0}\le
\frac{C}{(mn)^{\frac14(\frac{\beta}{6}-\mu)}}. 
\end{align*}
\end{proof}

\begin{lemma}
For $m\le m_0$ and $n\ge C_0$ and $\mu\geq 0$, 
$$\left|\int_0^{+\infty} \la x\ra^{\mu} e^{{\rm i}kx^{\beta}}h_m(x)\overline{h_n(x)}dx\right|\le \frac{C}{(mn)^{\frac14(\frac{\beta}{6}-\mu)}}.$$
\end{lemma}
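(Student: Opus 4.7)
The plan is to split the integral at $x = 2X_{m_0}$—the cutoff above which the Langer decomposition of $h_m$ becomes available—and estimate each piece by pointwise bounds on $h_m$ and $h_n$. Because the target decay $n^{-\frac14(\beta/6 - \mu)}$ is quite weak ($\beta \leq 2$ and $\mu \geq 0$ force $\frac14(\beta/6 - \mu) \leq \frac{1}{12}$), there is no need to exploit the oscillation of $e^{\rmi kx^\beta}$; the classical uniform bound $\|h_n\|_\infty \leq C n^{-1/12}$, attained near the turning point $X_n$, will already suffice. Boundedness of $m \leq m_0$ absorbs the $m$-factor in the target into the constant, so effectively the task is to produce the rate $n^{-1/12}$.

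On the compact piece $[0, 2X_{m_0}]$, since $m \leq m_0$ we have $|h_m(x)| \leq C_{m_0}$ and $\la x \ra^\mu \leq C_{m_0}$, so combining with $|h_n(x)| \leq C n^{-1/12}$ yields an integral bounded by $C n^{-1/12}$. On the tail $[2X_{m_0}, \infty)$ I would use the given decomposition $h_m(x) = \psi_1^{(m)}(x) + \psi_2^{(m)}(x)$ with $|\psi_2^{(m)}(x)| \leq C x^{-2} |\psi_1^{(m)}(x)|$. Since $x > 2X_{m_0} > X_m$ puts us strictly in the classically forbidden region for $h_m$, the quantity $\zeta_m(x) = \int_{X_m}^x \sqrt{\lambda_m - t^2}\, dt$ is purely imaginary and $|\zeta_m(x)|$ grows essentially like $x^2/2$ for large $x$; the Hankel factor $H^{(1)}_{1/3}(\zeta_m)$ then contributes exponential decay, giving $|h_m(x)| \leq C e^{-c x^2}$. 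Combined with $|h_n(x)| \leq C n^{-1/12}$, the tail integral is bounded by
\[
C n^{-1/12} \int_{2X_{m_0}}^\infty \la x \ra^\mu e^{-c x^2}\, dx \leq C' n^{-1/12}.
\]

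Summing the two pieces produces a uniform bound of $C n^{-1/12}$, which is dominated by $C(mn)^{-\frac14(\beta/6-\mu)}$ since the inequality reduces to $\beta - 6\mu \leq 2$ and holds throughout the range $\beta \leq 2$, $\mu \geq 0$; boundedness of $m \leq m_0$ absorbs the $m$-factor into the constant. The only mildly delicate point is confirming that the exponential decay of $|h_m(x)|$ on the tail dominates the polynomial weight $\la x \ra^\mu$, but this is immediate from the explicit integral expression for $\zeta_m$ together with the standard asymptotics of $H^{(1)}_{1/3}$ at imaginary argument. No oscillatory-integral machinery is needed here, which is why this lemma admits a much shorter argument than the Case III analysis ($m, n > m_0$) that occupies the bulk of the section.
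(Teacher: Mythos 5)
Your proof is correct, and it takes a slightly different route from the paper's. The paper splits at the $n$-dependent point $X_n^{1/3}$: on $[0,X_n^{1/3}]$ it bounds $h_n$ by the turning-point estimate $(X_n^2-x^2)^{-1/4}\approx X_n^{-1/2}$ and integrates over the short interval to get $CX_n^{-1/6+\mu/3}$, while on $[X_n^{1/3},\infty)$ it combines the exponential decay of $h_m$ with Cauchy--Schwarz and $\|h_n\|_{L^2}=1$ to get a bound $e^{-CX_n^{1/3}}$. You instead split at the fixed point $2X_{m_0}$ and control $h_n$ everywhere by the sup bound $\|h_n\|_{L^\infty}\le CX_n^{-1/6}\le Cn^{-1/12}$ (which is exactly the $p=\infty$ case of the Koch--Tataru lemma quoted in the paper's appendix), using the Gaussian decay of the finitely many $h_m$, $m\le m_0$, to handle the tail. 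Both arguments land on the rate $n^{-1/12}$ (yours even avoids the $\mu$-loss in the paper's inner estimate), and both correctly observe that $\frac14(\beta/6-\mu)\le\frac1{12}$ for $\beta\le2$, $\mu\ge0$, with the $m$-factor absorbed into the constant. Your version trades the paper's interval-length bookkeeping for a single global $L^\infty$ bound, which makes the argument a bit shorter; the paper's version has the minor advantage of not invoking the $L^p$ eigenfunction bounds in Section 4 (it reserves them for the $\beta>2$ analysis).
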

\begin{proof}
We divide the integral into two parts. 
$$\int_0^{+\infty} \la x\ra^{\mu} e^{{\rm i}kx^{\beta}}h_m(x)\overline{h_n(x)}dx=\int_0^{X_n^\frac13}+\int_{X_n^\frac13}^{+\infty}.$$
Since $x>2X_{m_0}$, we have 
$$|h_m(x)|\le 2(x^2-X_m^2)^{-\frac14}|\sqrt{\frac{\pi \zeta_m}{2}}H^{(1)}_\frac{1}{3}(\zeta_m)|\le 2 e^{-|\zeta_m|}.$$
On the other hand, for $x\in [0,X_n^\frac13]$, one has $|h_n(x)|\le C(X_n^2-x^2)^{-\frac14}$.  Note $1<\beta<2$, it follows 
\begin{eqnarray*}
\left|\int_0^{X_n^\frac13}\la x\ra^{\mu} e^{{\rm i}kx^{\beta}}h_m(x)\overline{h_n(x)}dx\right|& \le&  C\int_0^{X_n^\frac13}\la x\ra^{\mu}(X_n^2-x^2)^{-\frac14}dx \le CX_n^{-\frac16+\frac\mu3}\\
& \le & \frac{C}{(mn)^{\frac14(\frac{\beta}{6}-\mu)}}.
\end{eqnarray*}
When $x\ge X_n^\frac13\ge2X_{m_0}$, from Lemma 5.5 in \cite{LiangLuo2021}, $e^{-|\zeta_m|}\le e^{-C(x-X_m)}$. Note $\|h_n(x)\|_{L^2}=1$, from $\rm{H\ddot{o}lder}$ inequality, 
$$\left|\int_{X_n^\frac13}^{+\infty}\la x\ra^{\mu} e^{{\rm i}kx^{\beta}}h_m(x)\overline{h_n(x)}dx\right|\le C\left(\int_{X_n^\frac13}^{+\infty}\la x\ra^{2\mu}e^{-Cx}dx\right)^\frac12\le e^{-CX_n^\frac13}.$$
\end{proof}
\subsection{The estimate for Case III}
In the following we will turn to the complicated case when $m,n>m_0$. We divide the integral into two parts $\int_0^{+\infty} \la x\ra^{\mu} e^{{\rm i}kx^{\beta}}h_m(x)\overline{h_n(x)}dx=\int_0^{X_n}+\int_{X_n}^{+\infty}$. 
We first go to the latter case  $\int_{X_n}^{+\infty}$.  \\
\subsubsection{The integral on $[X_n, +\infty)$}
\begin{lemma}\label{Xntowuqiong}
For $m_0<m\le n$, 
$$\displaystyle\left|\int_{X_n}^{+\infty}\la x\ra^\mu e^{{\rm i}kx^{\beta}}h_m(x)\overline{h_n(x)}dx\right|\leq\frac{C}{m^{\frac{1}{12}-\frac\mu4} n^{\frac{1}{12}-\frac\mu4}}\leq \frac{C}{(mn)^{\frac14(\frac{\beta}{6}-\mu)}}.$$
\end{lemma}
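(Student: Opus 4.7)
The plan is to bypass the oscillation entirely on $[X_n,+\infty)$---the phase $e^{\rmi kx^\beta}$ gains no leverage here because both $h_m$ and $h_n$ decay exponentially past their turning points---and reduce the problem to a weighted $L^2$ Cauchy--Schwarz. Splitting the weight evenly,
\[
\left|\int_{X_n}^{+\infty}\la x\ra^\mu e^{\rmi kx^\beta}h_m(x)\overline{h_n(x)}\,dx\right|\le \|\la x\ra^{\mu/2}h_m\|_{L^2(X_n,+\infty)}\cdot\|\la x\ra^{\mu/2}h_n\|_{L^2(X_n,+\infty)},
\]
so it suffices to show each factor is bounded by $C j^{\mu/4-1/6}$ for $j\in\{m,n\}$. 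The product is then at most $C m^{\mu/4-1/6}n^{\mu/4-1/6}\le C m^{\mu/4-1/12}n^{\mu/4-1/12}$ since $m,n\ge1$; this is the first estimate of the lemma (in fact slightly sharper). The second estimate of the lemma reduces to $\tfrac{1}{12}\ge\tfrac{\beta}{24}$, which holds since $\beta\le 2$.

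For the pointwise control I would use the decomposition $h_j=\psi_1^{(j)}+\psi_2^{(j)}$ recalled at the beginning of the section. Since $\psi_2^{(j)}=\psi_1^{(j)}\cdot O(\lambda_j^{-1})$, we have $|h_j(x)|\le C|\psi_1^{(j)}(x)|$ for $j>m_0$, covering both $j=m$ and $j=n$. For $x>X_j$ the Langer variable $\zeta_j(x)$ is purely imaginary, and Lemma~5.4 of \cite{LiangLuo2021} yields $|\sqrt{\pi\zeta_j/2}\,H_{1/3}^{(1)}(\zeta_j)|\le e^{-|\zeta_j(x)|}$, giving
\[
|h_j(x)|\le C(x^2-X_j^2)^{-1/4}e^{-|\zeta_j(x)|},\qquad x>X_j.
\]
Since the integrand is non-negative I may enlarge the domain of the $h_m$ norm from $(X_n,+\infty)$ to $(X_m,+\infty)$, after which the two factors decouple.

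The remaining task is the Airy-type tail bound
\[
\int_{X_j}^{+\infty}\la x\ra^\mu(x^2-X_j^2)^{-1/2}e^{-2|\zeta_j(x)|}\,dx\le C X_j^{\mu-2/3}\sim j^{\mu/2-1/3}.
\]
I would substitute $y=x-X_j$ and then rescale $u=X_j^{1/3}y$: near the turning point $\la x\ra^\mu\sim X_j^\mu$, $(x^2-X_j^2)^{-1/2}\sim(2u)^{-1/2}X_j^{-1/3}$, the Jacobian contributes another $X_j^{-1/3}$, and Lemma~5.5 of \cite{LiangLuo2021} gives $|\zeta_j(x)|\ge c X_j^{1/2}y^{3/2}=c u^{3/2}$; the rescaled integrand is thus $X_j^{\mu-2/3}(2u)^{-1/2}e^{-cu^{3/2}}$, whose $u$-integral is absolutely convergent. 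The tail $x\ge 2X_j$ is negligible because $|\zeta_j|$ grows at least linearly there and dominates any polynomial weight. I do not anticipate a real obstacle: unlike the inner integrals on $[0,X_n]$ treated next, no stationary-phase or Langer turning-point asymptotics are required here, only the exponential decay of Hermite functions beyond their classical turning points.
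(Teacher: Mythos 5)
Your argument is correct, and it takes a genuinely different route from the paper. The paper never invokes Cauchy--Schwarz here: it splits $[X_n,+\infty)$ into $[X_n,2X_n]$ and $[2X_n,+\infty)$ (the latter contributing $e^{-Cn}$), and on $[X_n,2X_n]$ it bounds the product $|\psi_1^{(m)}\overline{\psi_1^{(n)}}|$ directly, using $(x^2-\lambda_m)^{-\frac14}(x^2-\lambda_n)^{-\frac14}\le(x^2-\lambda_n)^{-\frac12}$ together with a further split at $X_n+X_n^{1/3}$, arriving at $CX_n^{\mu-\frac13}\sim n^{\frac\mu2-\frac16}$ and then trading part of the $n$-decay for $m$-decay via $m\le n$ (a step that implicitly uses $\mu\le\frac13$). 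Your decoupling via $\|\la x\ra^{\mu/2}h_m\|_{L^2(X_n,\infty)}\|\la x\ra^{\mu/2}h_n\|_{L^2(X_n,\infty)}$, followed by the Airy rescaling $u=X_j^{1/3}(x-X_j)$ of each tail norm, is clean and I verified the arithmetic: each factor is $O(j^{\frac\mu4-\frac16})$ (the region $x\ge 2X_j$ being killed by $|\zeta_j|\gtrsim x^2$ there), so you obtain the symmetric bound $(mn)^{\frac\mu4-\frac16}$, which is slightly sharper than the paper's $(mn)^{\frac\mu4-\frac1{12}}$ and dominates it for $mn\ge1$ without any restriction on the ratio $n/m$. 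The only points worth making explicit in a write-up are (i) that the pointwise bound $|h_j(x)|\le C(x^2-X_j^2)^{-\frac14}e^{-|\zeta_j(x)|}$ is square-integrable across the turning point because the singularity is only of order $(x-X_j)^{-\frac12}$ after squaring, and (ii) the quantitative lower bound for $|\zeta_j|$ on $[2X_j,\infty)$ justifying that the tail is $O(e^{-c\lambda_j})$; both are supplied by the same Lemmas 5.4 and 5.5 of \cite{LiangLuo2021} that the paper uses, so there is no gap.
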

We first estimate the integral on $[2X_n, +\infty]$.  The following result is clear from \cite{LiangLuo2021}. 
\begin{lemma}\label{2.2}
For $m_0<m\le n$,
$$\displaystyle\left|\int_{2X_n}^{+\infty}\la x\ra^\mu e^{{\rm i}kx^\beta}h_m(x) \overline{h_n(x)}dx\right|\leq e^{-C n}.$$
\end{lemma}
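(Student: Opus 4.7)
The plan is to discard the oscillatory factor entirely (pass to absolute values), because on the interval $[2X_n,+\infty)$ both Hermite functions are already past their classical turning points and decay exponentially quickly; no stationary phase or integration by parts is needed. The goal is to show that this decay dominates both the weight $\la x\ra^\mu$ and the range of integration, yielding the $e^{-Cn}$ bound.

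First I would combine the Langer-type representation recalled just before Lemma \ref{Xntowuqiong}, namely $h_m(x)=\psi_1^{(m)}(x)+\psi_2^{(m)}(x)$ with $\psi_1^{(m)}(x)=(\lambda_m-x^2)^{-\frac14}\bigl(\tfrac{\pi\zeta_m}{2}\bigr)^{\frac12}H_{\frac13}^{(1)}(\zeta_m)$, together with Lemma 5.4 of \cite{LiangLuo2021} (which gives $\bigl|\sqrt{\pi\zeta/2}\,H_{1/3}^{(1)}(\zeta)\bigr|\le e^{-|\zeta|}$ past the turning point) and Lemma 5.5 of \cite{LiangLuo2021} (which gives $|\zeta_m(x)|\ge\frac{2\sqrt2}{3}X_m^{1/2}(x-X_m)^{3/2}$ for $x>X_m$). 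Since $m\le n$ forces $X_m\le X_n$ and $\lambda_m\le\lambda_n\le x^2/4$ for $x\ge2X_n$, the prefactor satisfies $|\lambda_m-x^2|^{-\frac14}\le Cx^{-\frac12}$, and an inspection of the defining integral $\zeta_m(x)=\int_{X_m}^x\sqrt{\lambda_m-t^2}\,dt$ shows $|\zeta_m(x)|\ge|\zeta_n(x)|$ on $x\ge X_n$. Therefore, pointwise on $[2X_n,\infty)$,
\[
|h_m(x)\overline{h_n(x)}|\le C\,x^{-1}\,e^{-|\zeta_m(x)|-|\zeta_n(x)|}\le C\,x^{-1}\,e^{-2|\zeta_n(x)|}.
\]

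The remaining step is to exploit that $|\zeta_n(x)|$ is uniformly large on the integration domain and still has genuine decay in $x$. Specializing Lemma 5.5 of \cite{LiangLuo2021} at $x=2X_n$ gives $|\zeta_n(2X_n)|\ge\frac{2\sqrt2}{3}X_n^{1/2}\cdot X_n^{3/2}=\frac{2\sqrt2}{3}\lambda_n\ge c\,n$. For $x\ge 2X_n$ one also has $\sqrt{t^2-\lambda_n}\ge\frac{\sqrt3}{2}t$ on $t\ge 2X_n$, so $|\zeta_n(x)|-|\zeta_n(2X_n)|\ge c(x^2-4X_n^2)$. I would then split the exponent in half and write
\[
\int_{2X_n}^{+\infty}\la x\ra^\mu |h_m(x)\overline{h_n(x)}|\,dx\le e^{-cn}\int_{2X_n}^{+\infty}\la x\ra^{\mu-1}e^{-|\zeta_n(x)|}\,dx,
\]
using one factor $e^{-|\zeta_n(x)|}\le e^{-cn}$ as a uniform prefactor and keeping the other to drive convergence. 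The surviving integral converges by the $x^2$ growth of $|\zeta_n(x)|$, with a bound at worst polynomial in $n$; that polynomial is absorbed by $e^{-cn}$, yielding the claimed $e^{-Cn}$.

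There is no real obstacle here — everything past the turning point is exponentially small, and the only mild point is ensuring the polynomial weight $\la x\ra^\mu$ does not spoil the tail, which is why I split the exponential into a uniform factor giving $e^{-cn}$ and a $x^2$-type factor that produces an integrable tail. This is exactly the "clear from \cite{LiangLuo2021}" content signalled by the authors; the substantive work of the section is deferred to the complementary range $[X_n^{2/3},X_n-X_n^{\nu_2}]$, where the oscillatory integral structure of $e^{ikx^\beta}$ is genuinely needed.
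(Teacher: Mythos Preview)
Your proposal is correct and matches the paper's intent: the paper gives no proof of its own but simply declares the result ``clear from \cite{LiangLuo2021}'', and your argument---discarding the oscillation, invoking Lemmas~5.4 and~5.5 of \cite{LiangLuo2021} for the exponential bound on $\psi_1^{(m)},\psi_1^{(n)}$ past the turning point, and splitting $e^{-2|\zeta_n|}$ into a uniform $e^{-cn}$ factor times an integrable tail---is exactly the content being cited.
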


For the integral on $[X_n,2X_n]$, we prove that
\begin{lemma}\label{2.3}
For $m_0<m\le n$,
$$\displaystyle\left|\int^{2X_n}_{X_n}\la x\ra^\mu e^{{\rm i}kx^{\beta}}h_m(x) \overline{h_n(x)}dx\right|\leq  \frac{C}{m^{\frac{1}{12}-\frac\mu4} n^{\frac{1}{12}-\frac\mu4}}.$$
\end{lemma}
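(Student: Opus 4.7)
The plan is to exploit the fact that on $[X_n,2X_n]$ both Hermite functions lie past their respective turning points $X_m\le X_n$ and decay super-polynomially, so that the oscillatory factor $e^{\rmi k x^\beta}$ plays no role and a simple Cauchy–Schwarz argument suffices. The estimate reduces to a clean size bound; no integration by parts or stationary phase is needed here.

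First I would observe $\la x\ra^\mu\le CX_n^\mu\sim Cn^{\mu/2}$ on $[X_n,2X_n]$ and apply Cauchy–Schwarz, giving
$$\left|\int_{X_n}^{2X_n}\la x\ra^\mu e^{\rmi kx^\beta}h_m(x)\overline{h_n(x)}\,dx\right|\le Cn^{\mu/2}\,\|h_m\|_{L^2[X_n,2X_n]}\,\|h_n\|_{L^2[X_n,2X_n]}.$$
Next I would bound each $L^2$ factor via the Langer form recalled in the excerpt. For $j>m_0$ and $x>X_j$, $|h_j(x)|\le C|x^2-\lambda_j|^{-1/4}e^{-|\zeta_j(x)|}$, and Lemma 5.5 of \cite{LiangLuo2021} gives $|\zeta_j(x)|\ge cX_j^{1/2}(x-X_j)^{3/2}$. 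Squaring and changing variables $u=X_j^{1/3}(x-X_j)$ turns the turning-point singularity $(x^2-\lambda_j)^{-1/2}\sim(2X_j(x-X_j))^{-1/2}$ into $CX_j^{-2/3}u^{-1/2}$ with $dx=X_j^{-1/3}du$, so
$$\|h_j\|_{L^2[X_j,\infty)}^2\le CX_j^{-2/3}\int_0^{\infty}u^{-1/2}e^{-c'u^{3/2}}\,du\le CX_j^{-2/3}\le Cj^{-1/3}.$$
Since $m\le n$ implies $[X_n,2X_n]\subset[X_m,\infty)$, both restricted $L^2$ norms satisfy $\|h_m\|_{L^2[X_n,2X_n]}\le Cm^{-1/6}$ and $\|h_n\|_{L^2[X_n,2X_n]}\le Cn^{-1/6}$. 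The lower-order $\psi_2^{(j)}$ term in the Langer decomposition obeys the same bound up to a factor $O(1/\lambda_j)$ and is harmlessly absorbed.

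Combining the three estimates yields a bound $Cn^{\mu/2}m^{-1/6}n^{-1/6}$. Under the standing hypothesis $\mu<\beta/6\le 1/3$ for $1<\beta\le 2$, the elementary inequalities $-\tfrac16\le\tfrac\mu4-\tfrac1{12}$ (equivalent to $\mu\ge -\tfrac13$) and $\tfrac\mu2-\tfrac16\le\tfrac\mu4-\tfrac1{12}$ (equivalent to $\mu\le\tfrac13$) both hold, so the bound is no larger than $Cm^{\mu/4-1/12}n^{\mu/4-1/12}$, which is exactly the claim. I expect no serious obstacle: the only points of care are the integrability of $u^{-1/2}e^{-c'u^{3/2}}$ at $u=0$ (immediate) and the check that the Langer error term from $\psi_2^{(j)}$ preserves the same asymptotics.
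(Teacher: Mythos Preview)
Your argument is correct, and it takes a genuinely different route from the paper. The paper works pointwise with the product $\psi_1^{(m)}\overline{\psi_1^{(n)}}$: it bounds $|\psi_1^{(m)}(x)\overline{\psi_1^{(n)}(x)}|\le C(x^2-\lambda_n)^{-1/2}$ on $[X_n,2X_n]$ (using $m\le n$), splits at $X_n+X_n^{1/3}$, integrates the turning-point singularity on the inner piece to get $CX_n^{-1/3}$, and uses $|\zeta_n|\ge cX_n$ on the outer piece to get an exponentially small contribution; the final bound is $CX_n^{\mu-1/3}\sim Cn^{\mu/2-1/6}$. You instead separate the two factors by Cauchy--Schwarz and reduce to the single-function tail estimate $\|h_j\|_{L^2[X_j,\infty)}\le Cj^{-1/6}$, which you derive by the same Langer input packaged through the scaling substitution $u=X_j^{1/3}(x-X_j)$. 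Your route is a bit cleaner and even yields the slightly stronger bound $Cm^{-1/6}n^{\mu/2-1/6}$ (an extra $m^{-1/6}$ gain that is then thrown away); the paper's route stays closer to the pointwise Langer machinery used throughout Section~\ref{S4}. Two small points worth tightening in your write-up: the relation $(x^2-\lambda_j)^{-1/2}\le(2X_j(x-X_j))^{-1/2}$ holds as an \emph{inequality} for all $x>X_j$ (not just as an asymptotic near the turning point), which is exactly what makes your substitution legitimate on the whole half-line; and the lower bound $|\zeta_j(x)|\ge \tfrac{2\sqrt2}{3}X_j^{1/2}(x-X_j)^{3/2}$ follows directly from $t+X_j\ge 2X_j$ in the integrand, so it is indeed valid for every $x>X_j$, not only past $X_j+X_j^{1/3}$.
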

\begin{proof} As \cite{LiangLuo2021},  we only need to estimate the following integral $I: = \int^{2X_n}_{X_n}\la x\ra^\mu e^{{\rm i}kx^{\beta}}\psi_1^{(m)}(x) \overline{\psi_1^{(n)}(x)}dx$
since the rest three ones are higher order.  
 $I$ can be divided into two parts as 
\begin{align*}
&|I|=\displaystyle\left|\Bigg(\int^{2X_n}_{X_n+X_n^{\frac{1}{3}}}+\int^{X_n+X_n^{\frac{1}{3}}}_{X_n}\Bigg)\la x\ra^\mu e^{{\rm i}kx^{\beta}}\psi_1^{(m)}(x) \overline{\psi_1^{(n)}(x)}dx\right|\\
\leq& CX_n^\mu \Big(\int^{2X_n}_{X_n+X_n^{\frac{1}{3}}}+\int^{X_n+X_n^{\frac{1}{3}}}_{X_n}\Big)\left|\psi_1^{(m)}(x) \overline{\psi_1^{(n)}(x)}\right|dx.
\end{align*}
From Lemma 5.5 in \cite{LiangLuo2021}, when $x\geq X_n+X_n^{\frac{1}{3}}$,
$\left|\zeta_n\right| \geq\frac{2\sqrt2}{3}X_n^\frac{1}{2}(x-X_n)^\frac{3}{2}\geq\frac{2\sqrt2}{3}X_n.$
Thus 
\begin{align*}
          &\int^{2X_n}_{X_n+X_n^{\frac{1}{3}}}\left|\psi_1^{(m)}(x) \overline{\psi_1^{(n)}(x)}\right| dx\\
\leq & C\int^{2X_n}_{X_n+X_n^{\frac{1}{3}}}(x^2-\lambda_m)^{-\frac{1}{4}}(x^2-\lambda_n)^{-\frac{1}{4}} e^{-\left| \zeta_n\right|}dx \\
\leq & Ce^{-\frac{2\sqrt2}{3}X_n} \int^{2X_n}_{X_n+X_n^{\frac{1}{3}}}(x^2-\lambda_n)^{-\frac{1}{2}}dx\le  Ce^{-\frac{2\sqrt2}{3}X_n}.
\end{align*}
For the second part, 
\begin{align*}
          &\int^{X_n+X_n^{\frac{1}{3}}}_{X_n}\left|\psi_1^{(m)}(x) \overline{\psi_1^{(n)}(x)}\right|dx\\
\leq & C\int^{X_n+X_n^{\frac{1}{3}}}_{X_n}(x^2-\lambda_m)^{-\frac{1}{4}}(x^2-\lambda_n)^{-\frac{1}{4}}dx \leq C\int^{X_n+X_n^{\frac{1}{3}}}_{X_n} (x^2-\lambda_n)^{-\frac{1}{2}}dx \\
\leq &CX_n^{-\frac{1}{2}} \int^{X_n+X_n^{\frac{1}{3}}}_{X_n}(x-X_n)^{-\frac{1}{2}}dx \leq CX_n^{-\frac{1}{3}}.
\end{align*}
It follows 
$|I|\leq CX_n^{\mu-\frac13} \leq\frac{C}{m^{\frac{1}{12}-\frac\mu4}n^{\frac{1}{12}-\frac\mu4}}$.
\end{proof}
Combining with the above two lemmas we finish Lemma \ref{Xntowuqiong}. 

In the following we will estimate the integral on $[0,X_n]$, which is the most complicated case. Note $m_0< m\leq n$, the following two cases have to be considered respectively:  I. $X_n> 2X_m$; II. $X_m\leq X_n \leq 2X_m$.

\subsubsection{the integral estimate on $[0, X_n]$ when $X_n> 2X_m$}
Our aim in this part is to build the following 
\begin{lemma}\label{lemma2.4}
For $k \neq 0$, if $X_n> 2X_m$ and $1<\beta\leq 2$, then 
$$\displaystyle\left|\int^{X_n}_0 \la x\ra^\mu e^{{\rm i}kx^{\beta}}h_m(x)\overline{h_n(x)}dx\right|\leq
\frac{C (|k|^\iota\vee 1)}{m^{\frac18-\frac\mu4} n^{\frac{1}{12}-\frac\mu4}},$$
where $m_0< m\leq n$ and $\iota=\begin{cases}
\frac1{4-2\beta},&1<\beta<2,\\
0,&\beta=2.
\end{cases}$
\end{lemma}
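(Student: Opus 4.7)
The plan is to split the interval $[0,X_n]$ into several sub-intervals and to treat each piece either by a pointwise bound on $h_m\overline{h_n}$ or by an oscillatory-integral estimate on the phase $\Psi_{mn}(x):=\zeta_m(x)-\zeta_n(x)+kx^\beta$, in the spirit of the corresponding step in \cite{LiangLuo2021}. A natural decomposition is
\[
[0,X_n]=[0,X_m^{2/3}]\cup[X_m^{2/3},X_m-X_m^{\nu_2}]\cup[X_m-X_m^{\nu_2},X_m+X_m^{1/3}]\cup[X_m+X_m^{1/3},X_n-X_n^{1/3}]\cup[X_n-X_n^{1/3},X_n],
\]
with $\nu_2=1-\beta/3$ when $1<\beta<2$ and $\nu_2=5/9$ when $\beta=2$. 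On the two turning-point windows around $X_m$ and $X_n$, and on the initial piece $[0,X_m^{2/3}]$, the Langer pointwise bounds recalled in Section \ref{S4} for $h_m,h_n$, combined with the trivial factor $\la x\ra^\mu\le CX_n^\mu$, already yield contributions of the desired size. On the transition segment $[X_m+X_m^{1/3},X_n-X_n^{1/3}]$ the factor $h_m$ is exponentially damped past its turning point (as in the proof of Lemma \ref{Xntowuqiong}), which both absorbs any polynomial loss and accounts for the improvement of the $m$-exponent from $1/12$ to $1/8$ on the right-hand side of the statement.

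The only genuinely oscillatory piece is
\[
I_\ast:=\int_{X_m^{2/3}}^{X_m-X_m^{\nu_2}}\la x\ra^\mu e^{\rmi k x^\beta}\psi_1^{(m)}(x)\,\overline{\psi_1^{(n)}(x)}\,dx,
\]
the remaining $\psi_2$ contributions being of lower order as in \cite{LiangLuo2021}. Its two relevant derivatives are
\[
\Psi'_{mn}(x)=\sqrt{\lambda_m-x^2}-\sqrt{\lambda_n-x^2}+\beta k x^{\beta-1},\qquad \Psi''_{mn}(x)=-\frac{x}{\sqrt{\lambda_m-x^2}}+\frac{x}{\sqrt{\lambda_n-x^2}}+\beta(\beta-1)k x^{\beta-2}.
\]
Because $X_n>2X_m$, the first two terms of $\Psi'_{mn}$ are comparable to $-\sqrt{\lambda_n-\lambda_m}\sim -X_n$ on most of this interval, but the extra oscillation $\beta k x^{\beta-1}$ may cancel them and create a stationary point. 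The strategy is therefore to split $I_\ast$ according to whether $|\beta k x^{\beta-1}|\le\tfrac12|\sqrt{\lambda_m-x^2}-\sqrt{\lambda_n-x^2}|$ or not. On the first region $|\Psi'_{mn}|\gtrsim X_n$, so that integration by parts against $e^{\rmi\Psi_{mn}}$ yields the decay directly; on the second region we appeal to $\Psi''_{mn}$, where the term $\beta(\beta-1)k x^{\beta-2}$ now dominates and is of size $|k|x^{\beta-2}$ for $1<\beta<2$ (respectively of size $|k|$ for $\beta=2$), so that Lemma \ref{oscillatory integral lemma} gives a van der Corput bound of order $|k|^{-1/2}x^{(2-\beta)/2}$.

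The main obstacle will be the case analysis of the last step coupled with the correct choice of the cut-off $\nu_2$: it has to be tuned so that the turning-point window $[X_m-X_m^{\nu_2},X_m+X_m^{1/3}]$ and the loss of the second-derivative van der Corput bound near $X_m$ balance, and the values $\nu_2=1-\beta/3$ and $\nu_2=5/9$ are precisely the ones that do this in the ranges $1<\beta<2$ and $\beta=2$ respectively. Once this balance is achieved, putting together the Langer bounds $|\psi_1^{(m)}(x)|\lesssim(\lambda_m-x^2)^{-1/4}$, $|\psi_1^{(n)}(x)|\lesssim(\lambda_n-x^2)^{-1/4}$, the factor $\la x\ra^\mu\le CX_n^\mu$, and the oscillatory estimates above, and using $X_m\sim\sqrt m$, $X_n\sim\sqrt n$, yields the stated inequality, the factor $|k|^\iota$ arising exactly as the van der Corput gain on the second region.
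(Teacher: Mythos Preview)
Your decomposition and case analysis are borrowed from the harder regime $X_m\le X_n\le 2X_m$ (that is where $\nu_2$ and the left cut at $X_m^{2/3}$ appear in the paper), whereas for $X_n>2X_m$ the paper uses a coarser splitting with $\nu_1=-1/3$ (for $1<\beta<2$) or $\nu_1=2/3$ (for $\beta=2$), and the oscillatory piece is simply $[0,X_m-X_m^{\nu_1}]$. The point is that when $X_n>2X_m$ the difference $\sqrt{\lambda_n-x^2}-\sqrt{\lambda_m-x^2}$ is uniformly of size $\sim X_n$ on $[0,X_m)$, so one does not need your two-region split: for $1<\beta<2$ the paper shows directly that if $k\le X_n^{2-\beta}/8$ then $|\Psi'_{mn}|\gtrsim X_n$ on the whole interval and applies first-order van der Corput, while if $k>X_n^{2-\beta}/8$ then $n\lesssim k^{2/(2-\beta)}$ and the \emph{trivial} bound already gives $C|k|^{1/(4-2\beta)}(mn)^{-1/8+\mu/4}$. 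This is where $|k|^\iota$ actually comes from; it is a loss absorbed through the smallness of $n$, not a ``van der Corput gain'' as you write. For $\beta=2$ the paper instead uses the $k$-independent lower bound $\Psi''_{mn}(x)\ge 1/X_m-1/X_n\ge(2X_m)^{-1}$ on $[0,X_m-X_m^{2/3}]$.

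Beyond the mislabelling of $|k|^\iota$, your second-region argument has a real gap: you assert that there $\Psi''_{mn}$ is dominated by $\beta(\beta-1)kx^{\beta-2}$, but on your interval the term $-x(\lambda_m-x^2)^{-1/2}$ can be as large as $X_m^{(1-\nu_2)/2}=X_m^{\beta/6}$ near the right endpoint, while from $|k|x^{\beta-1}\sim X_n$ one only gets $|k|x^{\beta-2}\sim X_n/x\ge 2$, so the $k$-term does not dominate and the two can cancel (for $1<\beta<2$ and $k>0$ they even have opposite signs). Thus the second-derivative lower bound you need is not available without further work, and the intended van der Corput step on the second region does not go through as stated.
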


 As \cite{LiangLuo2021} we will use the following notation in the remained parts. We denote 
$f_m(x)=\int_0^{\infty}e^{-t}t^{-\frac{1}{6}}\lrp{1+\frac{{\rm i}t}{2\zeta_m}}^{-\frac{1}{6}}dt$
and 
$f_n(x)=\int_0^{\infty}e^{-t}t^{-\frac{1}{6}}\lrp{1+\frac{{\rm i}t}{2\zeta_n}}^{-\frac{1}{6}}dt$. 
When $x\in [0,X_{m}]$, from a straightforward computation we have 
\begin{align*}
\psi_1^{(m)}(x) &= (X_m^2-x^2)^{-\frac{1}{4}}\sqrt{\frac{\pi\zeta_m}{2}} H_\frac{1}{3}^{(1)}(\zeta_m)\\
&= (X_m^2-x^2)^{-\frac{1}{4}} \frac{e^{{\rm i}\lrp{\zeta_m-\frac{\pi}{6}-\frac{\pi}{4}}}}{\Gamma{\lrp{\frac{5}{6}}}}
\int_0^{\infty}e^{-t}t^{-\frac{1}{6}}\lrp{1+\frac{{\rm i}t}{2\zeta_m}}^{-\frac{1}{6}}dt\\
&=C(X_m^2-x^2)^{-\frac{1}{4}}e^{{\rm i}\zeta_m(x)}f_m(x).
\end{align*}
Similarly, 
$
\overline{\psi_1^{(n)}(x)} =C(X_n^2-x^2)^{-\frac{1}{4}}e^{-{\rm i}\zeta_n(x)}\overline{f_n(x)}.
$
For $x\in [0,X_m]$,  denote $\Psi(x)=(X_m^2-x^2)^{-\frac{1}{4}} (X_n^2-x^2)^{-\frac{1}{4}}\cdot f_m(x)\overline{f_n(x)}$
and 
$g(x)=(\zeta_n(x)-\zeta_m(x)-kx^{\beta})^\prime=\sqrt{X_n^2-x^2}-\sqrt{X_m^2-x^2}-k\beta x^{\beta-1}$,then 
\begin{align*}
\Psi^\prime(x)=&\frac{1}{2}x(X_m^2-x^2)^{-\frac{5}{4}} (X_n^2-x^2)^{-\frac{1}{4}}\cdot f_m(x)\overline{f_n(x)}\\
+&\frac{1}{2}x(X_m^2-x^2)^{-\frac{1}{4}} (X_n^2-x^2)^{-\frac{5}{4}}\cdot f_m(x)\overline{f_n(x)}\\
+&(X_m^2-x^2)^{-\frac{1}{4}} (X_n^2-x^2)^{-\frac{1}{4}}\cdot \left(f_m^\prime(x)\overline{f_n(x)}+f_m(x)\overline{f_n^\prime(x)}\right).
\end{align*}
When $x\in [0, X_m]$,  
 $\left|f_m(x)\right| \leq \Gamma(\frac{5}{6})$ and $\left|f_n(x)\right| \leq \Gamma(\frac{5}{6})$.  Thus, 
\begin{Corollary}\label{coro2.5}
For $x\in [0,X_m)$ and $m\leq n$, 
\begin{align*}
\left| \Psi^\prime(x)\right|\leq & C\Big(x(X_m^2-x^2)^{-\frac{5}{4}}(X_n^2-x^2)^{-\frac{1}{4}} +x(X_m^2-x^2)^{-\frac{1}{4}} (X_n^2-x^2)^{-\frac{5}{4}}+\\
& \frac{(X_m^2-x^2)^{\frac{1}{4}} (X_n^2-x^2)^{-\frac{1}{4}}}{X_m(X_m-x)^3} +
\frac{(X_m^2-x^2)^{-\frac{1}{4}} (X_n^2-x^2)^{\frac{1}{4}}}{X_n (X_n-x)^3}\Big)\\
=&C\big(J_1+J_2+J_3+J_4\big)\leq C(J_1+J_3).
\end{align*}
\end{Corollary}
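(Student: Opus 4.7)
The plan is to bound each of the three summands of $\Psi'(x)$ displayed just above the statement separately, and then to observe that two of the four resulting terms are dominated by the others when $m\le n$. The first two summands contain no derivative of $f_m$ or $\overline{f_n}$, so the pointwise inequalities $|f_m(x)|,|f_n(x)|\le\Gamma(5/6)$ recorded just above the statement give immediately the contributions $J_1$ and $J_2$.

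The third summand involves $f_m'(x)\overline{f_n(x)}+f_m(x)\overline{f_n'(x)}$, and the core task is to prove
\[
|f_m'(x)|\le\frac{C(X_m^2-x^2)^{1/2}}{X_m(X_m-x)^3},\qquad x\in[0,X_m),
\]
with the analogous bound for $f_n'$. Differentiating under the integral sign in $f_m(x)=\int_0^\infty e^{-t}t^{-1/6}(1+\rmi t/(2\zeta_m(x)))^{-1/6}dt$ produces the explicit formula
\[
f_m'(x)=\frac{\rmi\,\zeta_m'(x)}{12\,\zeta_m(x)^2}\int_0^\infty e^{-t}t^{5/6}\Bgp{1+\tfrac{\rmi t}{2\zeta_m(x)}}^{-7/6}dt.
\]
Using $|\zeta_m'(x)|=\sqrt{X_m^2-x^2}$ on $[0,X_m)$, the turning point lower bound $|\zeta_m(x)|\ge c\,X_m^{1/2}(X_m-x)^{3/2}$ (the oscillatory-side analogue of Lemma~5.5 in \cite{LiangLuo2021}), and the fact that the branch of $\zeta_m$ chosen in the Langer representation keeps $\rmi t/(2\zeta_m(x))$ in the half-plane where $|1+\rmi t/(2\zeta_m(x))|\ge1$ (so that the $t$-integral is bounded by $\Gamma(11/6)$), one reaches the claimed estimate. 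Multiplying by the prefactor $(X_m^2-x^2)^{-1/4}(X_n^2-x^2)^{-1/4}$ and by the uniform bound on $|f_n|$ yields $J_3$, and symmetrically $J_4$.

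For the final reduction $J_1+J_2+J_3+J_4\le C(J_1+J_3)$ I would use only that $0<X_m\le X_n$. On $[0,X_m)$, the inequality $X_m^2-x^2\le X_n^2-x^2$ gives $J_2/J_1\le 1$; and the identity
\[
\frac{J_4}{J_3}=\frac{X_m}{X_n}\bgp{\frac{X_m-x}{X_n-x}}^{5/2}\sqrt{\frac{X_n+x}{X_m+x}},
\]
combined with $(X_m-x)/(X_n-x)\le X_m/X_n$ and $(X_n+x)/(X_m+x)\le X_n/X_m$, produces $J_4/J_3\le(X_m/X_n)^3\le 1$. Adding the four bounds completes the proof.

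The main technical obstacle is the derivative estimate on $f_m$ near the turning point: one must convert the apparent $|\zeta_m|^{-2}$ growth generated by differentiating under the integral into the stronger $(X_m-x)^{-3}$ bound, which forces one to invoke the full cubic turning-point asymptotic of $|\zeta_m|$ rather than any cruder estimate, and which in turn requires that the chosen branch of $\zeta_m$ be consistent with the one used in the Langer representation of $h_m$ so that the $t$-integral remains absolutely convergent uniformly in $x$.
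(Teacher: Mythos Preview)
Your proposal is correct and follows exactly the approach the paper intends: the paper merely records the formula for $\Psi'(x)$ and the bounds $|f_m|,|f_n|\le\Gamma(5/6)$ and then states the corollary, deferring the remaining details (the estimate of $|f_m'|$ and the comparison $J_2\le J_1$, $J_4\le J_3$) to \cite{LiangLuo2021}. You have accurately filled in those deferred steps---in particular the differentiation under the integral sign together with the turning-point lower bound $|\zeta_m(x)|\ge cX_m^{1/2}(X_m-x)^{3/2}$ yielding $|f_m'(x)|\le C(X_m^2-x^2)^{1/2}/\bigl(X_m(X_m-x)^3\bigr)$, and the elementary monotonicity argument giving $J_2\le J_1$ and $J_4\le J_3$---so your write-up is in fact more self-contained than the paper's.
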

We first estimate the integral on $[0, X_m-X_m^{-\frac{1}{3}}]$. 

\begin{lemma}\label{lemma2.6}
For $k\neq 0,1<\beta<2$, if $X_n> 2X_m$, then
$$\left|\int_0^{X_m-X_m^{-\frac{1}{3}}} \la x\ra^\mu e^{{\rm i}kx^{\beta}}h_m(x)\overline{h_n(x)}dx\right|\leq \frac{C(|k|^\frac1{4-2\beta}\vee 1)}{m^{\frac{1}{8}-\frac\mu4} n^{\frac{1}{8}-\frac\mu4}}, $$
where $m_0< m\leq n$.
\end{lemma}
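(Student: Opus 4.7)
The plan is to use the Langer asymptotic expansion together with oscillatory-integral techniques. First I would replace $h_m$ and $\overline{h_n}$ by their leading terms $\psi_1^{(m)}(x)=C(X_m^2-x^2)^{-1/4}e^{{\rm i}\zeta_m(x)}f_m(x)$ and $\overline{\psi_1^{(n)}(x)}$ on $[0,X_m)$; the remainders $\psi_2^{(\cdot)}$ carry an extra factor $O(\lambda^{-1})$ and, by $\rm H\ddot older$ applied as in the earlier cases, contribute only a subdominant error. The task then reduces to bounding
\begin{equation*}
I_{\mathrm{main}}=\int_0^{X_m-X_m^{-1/3}}\langle x\rangle^\mu\,\Psi(x)\,e^{{\rm i}\Phi(x)}\,dx,
\end{equation*}
where $\Psi(x)=(X_m^2-x^2)^{-1/4}(X_n^2-x^2)^{-1/4}f_m(x)\overline{f_n(x)}$ and $\Phi(x)=\zeta_m(x)-\zeta_n(x)+kx^\beta$, so that $\Phi'(x)=\sqrt{X_m^2-x^2}-\sqrt{X_n^2-x^2}+k\beta x^{\beta-1}$. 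The key input, which is where the hypothesis $X_n>2X_m$ enters, is the uniform lower bound $|\sqrt{X_n^2-x^2}-\sqrt{X_m^2-x^2}|\geq(\sqrt{3}-1)X_m$ on $[0,X_m]$, together with the pointwise bound $(X_n^2-x^2)^{-1/4}\leq cX_n^{-1/2}$ there.

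I would then split the argument on the size of $|k|$ relative to $X_m^{2-\beta}$. If $|k|\beta X_m^{\beta-1}\leq \tfrac12(\sqrt{3}-1)X_m$, i.e., the non-oscillatory part of $\Phi'$ dominates everywhere, a single integration by parts gives
\begin{equation*}
I_{\mathrm{main}}=\Big[\frac{\langle x\rangle^\mu\Psi(x)}{{\rm i}\Phi'(x)}e^{{\rm i}\Phi(x)}\Big]_0^{X_m-X_m^{-1/3}}-\int_0^{X_m-X_m^{-1/3}}\Big(\frac{\langle x\rangle^\mu\Psi(x)}{{\rm i}\Phi'(x)}\Big)'e^{{\rm i}\Phi(x)}\,dx,
\end{equation*}
which, combined with $|\Phi'|\geq cX_m$, the inequality $(X_n^2-x^2)^{-1/4}\leq cX_n^{-1/2}$, and Corollary~\ref{coro2.5} for $\Psi'$, yields a bound of order $X_m^{\mu-1/2}X_n^{-1/2}$, strictly stronger than the desired $m^{-1/8+\mu/4}n^{-1/8+\mu/4}$. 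The case $k<0$ with $|k|$ large is handled identically, since then both summands of $\Phi'$ have the same sign and $|\Phi'(x)|\geq c(X_m+|k|x^{\beta-1})$.

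The delicate subcase is $k>0$ with $|k|\geq cX_m^{2-\beta}$, when a unique stationary point $x_0\in(0,X_m)$ appears, determined by $\sqrt{X_n^2-x_0^2}-\sqrt{X_m^2-x_0^2}=k\beta x_0^{\beta-1}$, forcing $x_0\sim(X_m/k)^{1/(\beta-1)}$ and, after verifying that the $k x^{\beta-2}$ term dominates in $\Phi''$ at $x_0$, $|\Phi''(x_0)|\sim k^{1/(\beta-1)}X_m^{(\beta-2)/(\beta-1)}$. I would introduce a cutoff $\delta$ and split the integration at $[x_0-\delta,x_0+\delta]$: on the neighborhood van der Corput's second-derivative lemma together with Corollary~\ref{coro2.5} produces a contribution of order $|\Phi''(x_0)|^{-1/2}\cdot\langle x_0\rangle^\mu X_m^{-1/2}X_n^{-1/2}$; on the complement $|\Phi'(x)|\geq c|\Phi''(x_0)||x-x_0|$ and integration by parts is available, with boundary and remainder terms of size $(|\Phi''(x_0)|\delta)^{-1}$ times a bounded amplitude integral. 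Balancing these two contributions in $\delta$ yields exactly the factor $|k|^{1/(4-2\beta)}$ in the stated estimate.

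The principal obstacle lies in this stationary-point subcase: one must verify that $\Phi''$ really is dominated by $k\beta(\beta-1)x^{\beta-2}$ near $x_0$ (equivalently, $|k|x_0^{3-\beta}\gg X_m$, which is automatic once $|k|\geq cX_m^{2-\beta}$), carefully track how the amplitude, the $\langle x\rangle^\mu$ weight, and the $\Psi'$ bound from Corollary~\ref{coro2.5} combine in the optimization over $\delta$ so that the exponent is exactly $1/(4-2\beta)$ and not something weaker, and confirm that the derivative-integral from integration by parts is absorbed by the boundary term. The truncation at $X_m-X_m^{-1/3}$ (rather than $X_m$) is precisely what keeps $(X_m^2-x^2)^{-1/4}\leq X_m^{-1/6}$ manageable in the boundary contribution of the integration by parts.
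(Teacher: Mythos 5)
Your treatment of the non-stationary regime is sound and essentially matches the paper's Case 1 (the paper uses the threshold $k\le\frac{X_n^{2-\beta}}{8}$, under which $g(x)\ge\frac{X_n}{4}$ on the whole interval, rather than your more restrictive $|k|\beta X_m^{\beta-1}\le\frac12(\sqrt3-1)X_m$; note that even for $X_m^{2-\beta}\lesssim k\le\frac{X_n^{2-\beta}}{8}$ there is still no stationary point, since a critical point requires $k\beta x_0^{\beta-1}=\sqrt{X_n^2-x_0^2}-\sqrt{X_m^2-x_0^2}\ge X_n-X_m\ge\frac{X_n}{2}$, so part of what you route through stationary phase is still handled by a first-derivative bound). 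The real divergence, and the gap, is in the large-$k$ regime. The paper does no stationary-phase analysis at all: once $k>\frac{X_n^{2-\beta}}{8}$, the condition $\beta<2$ forces $2n\le(8k)^{\frac{2}{2-\beta}}+1$, so $(mn)^{1/8}\le Ck^{\frac{1}{4-2\beta}}$ and the trivial Cauchy--Schwarz bound $\left|\int\right|\le CX_m^\mu$ already yields the claim; the factor $|k|^{\frac{1}{4-2\beta}}$ in the statement exists precisely to absorb this trivial bound, not as the output of an oscillatory estimate.

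Your stationary-point analysis, as sketched, does not close. First, the claim that $\Phi''(x_0)$ is dominated by $k\beta(\beta-1)x_0^{\beta-2}$ ``automatically once $|k|\ge cX_m^{2-\beta}$'' is false: $\Phi''$ contains the Langer term $-x/\sqrt{X_m^2-x^2}$, which is large and of the opposite sign as $x_0$ approaches the turning point (it reaches size $\sim X_m^{2/3}$ at the endpoint $X_m-X_m^{-1/3}$), and for $k$ near the threshold at which the stationary point first enters the interval the two terms can cancel, producing a degenerate critical point for which the second-derivative van der Corput lemma gives nothing. Second, the assertion that optimizing over $\delta$ ``yields exactly $|k|^{1/(4-2\beta)}$'' is not derived, and a direct computation of $|\Phi''(x_0)|^{-1/2}$ times the amplitude produces powers of $X_n$ (for instance $X_n^{\frac{3-2\beta}{2(\beta-1)}}$ when $\mu=0$, which is a positive power for $\beta<\frac32$) whose compatibility with the target $(mn)^{-\frac18+\frac\mu4}$ requires exactly the kind of constraint between $k$, $X_m$, $X_n$ that the paper's elementary observation $2n\le(8k)^{\frac{2}{2-\beta}}+1$ encodes. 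You should replace the entire stationary-point subcase by that observation.
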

\begin{proof}
First we estimate the main term of the integral
\begin{align*}
\int^{X_m-X_m^{-\frac{1}{3}}}_{0} \la x\ra^\mu e^{{\rm i}kx^{\beta}}\psi_1^{(m)}(x)\overline{\psi_1^{(n)}(x)}dx= C\int^{X_m-X_m^{-\frac{1}{3}}}_{0} \la x\ra^\mu e^{{\rm i}(\zeta_m-\zeta_n+kx^{\beta})}\Psi(x)dx,\\
\end{align*}
by method of oscillating integral estimate, where
$\Psi(x)=(X_m^2-x^2)^{-\frac{1}{4}} (X_n^2-x^2)^{-\frac{1}{4}}\cdot f_m(x)\overline{f_n(x)}.$
We discuss two different cases. \\
Case 1:  $k\leq\frac{X_n^{2-\beta}}{8}$. In this case, we have
\begin{align*}
g(x)\geq\sqrt{X_n^2-x^2}-\sqrt{\frac{X_n^2}{4}-x^2}-\beta kX_n^{\beta-1}
\geq \frac{X_n}{2}-2\cdot\frac{X_n^{2-\beta}}{8}X_n^{\beta-1}\geq\frac{X_n}{4}.
\end{align*}
Thus, by Lemma \ref{oscillatory integral lemma},
\begin{align*}
&\left|\int^{X_m-X_m^{-\frac{1}{3}}}_{0}e^{{\rm i}\frac{\zeta_m-\zeta_n+kx^\beta}{X_n}X_n}\la x\ra^\mu\Psi(x)dx\right|\\
\leq& CX_n^{-1}\left(\left|\left(\la x\ra^\mu\Psi\right)(X_m-X_m^{-\frac{1}{3}})\right|+\int_0^{X_m-X_m^{-\frac{1}{3}}}\left|\left(\la x\ra^\mu\Psi\right)^\prime(x)\right|dx\right)\\
\leq& CX_n^{-1}\left(X_m^\mu\left|\Psi(X_m-X_m^{-\frac{1}{3}})\right| +\int_0^{X_m-X_m^{-\frac{1}{3}}}\left(2\la x\ra^\mu\left(J_1+J_3\right)+\mu\la x\ra^{\mu-1}\frac{ x}{\la x\ra}\left|\Psi(x)\right|\right)dx\right)\\
\leq& CX_n^{-1}X_m^\mu\left(\left|\Psi(X_m-X_m^{-\frac{1}{3}})\right| +\int_0^{X_m-X_m^{-\frac{1}{3}}}\left(J_1+J_3\right)dx\right)+C\mu X_n^{-1}\int_0^{X_m-X_m^{-\frac{1}{3}}}\la x\ra^{\mu-1}\left|\Psi(x)\right|dx.
\end{align*}
Clearly,
\begin{align*}
X_m^\mu\left|\Psi(X_m-X_m^{-\frac{1}{3}})\right| &\leq CX_m^\mu\left(X_m^2-(X_m-X_m^{-\frac{1}{3}})^2\right)^{-\frac{1}{4}}
 \left(X_n^2-(X_m-X_m^{-\frac{1}{3}})^2\right)^{-\frac{1}{4}}\\
&\leq C X_m^{-\frac{1}{3}+\mu},
\end{align*}
and
\begin{align*}
 & \int_0^{X_m-X_m^{-\frac13}}\mu \la x\ra^{\mu-1}\left|\Psi(x)\right|dx
\leq C\left(X_m^2-(X_m-X_m^{-\frac13})^2\right)^{-\frac14}
 \left(X_n^2-(X_m-X_m^{-\frac13})^2\right)^{-\frac14}\int_0^{X_m}\mu x^{\mu-1}dx\\
\leq& C\left(X_m^2-(X_m-X_m^{-\frac13})^2\right)^{-\frac14}
 \left(X_n^2-(X_m-X_m^{-\frac13})^2\right)^{-\frac14}X_m^{\mu}\leq CX_m^{-\frac13+\mu},
\end{align*}
together with
\begin{align*}
\int_0^{X_m-X_m^{-\frac{1}{3}}}J_1dx &\leq C \int_0^{X_m-X_m^{-\frac{1}{3}}} x(X_m^2-x^2)^{-\frac{5}{4}}(X_m^2-x^2)^{-\frac{1}{4}}dx
 \leq C X_m^{-\frac{1}{3}},
\end{align*}
and
\begin{align*}
\int_0^{X_m-X_m^{-\frac{1}{3}}}J_3dx&\leq CX_m^{-1}\int_0^{X_m-X_m^{-\frac{1}{3}}}(X_m-x)^{-3}dx \leq C X_m^{-\frac{1}{3}}.
\end{align*}
So we obtain
$\left|\int^{X_m-X_m^{-\frac{1}{3}}}_{0} \la x\ra^\mu e^{{\rm i}kx^\beta}\psi_1^{(m)}(x)\overline{\psi_1^{(n)}(x)}dx\right|\leq CX_m^{-\frac{1}{3}+\mu}X_n^{-1}.$
Now we turn to remained three terms. Since $m_0< m\le n$,
\begin{align*}
\left|\int^{X_m-X_m^{-\frac{1}{3}}}_{0} \la x\ra^\mu e^{{\rm i}kx^\beta}\psi_2^{(m)}(x)\overline{\psi_1^{(n)}(x)}dx\right|
&\leq C\int_0^{X_m-X_m^{-\frac{1}{3}}}X_m^{-2+\mu}(X_m^2-x^2)^{-\frac{1}{4}}(X_n^2-x^2)^{-\frac{1}{4}}dx\\
&\leq CX_m^{-\frac{3}{2}+\mu}X_n^{-\frac12}\le Cn^{-\frac14+\frac\mu2}.
\end{align*}
Similarly, when $m_0< m\le n$, we have
$$ \left|\int^{X_m-X_m^{-\frac13}}_{0} \la x\ra^\mu e^{{\rm i}kx^\beta}\psi_1^{(m)}(x)\overline{\psi_2^{(n)}(x)}dx\right|\leq Cn^{-1+\frac\mu2} $$
and
$$ \left|\int^{X_m-X_m^{-\frac13}}_{0} \la x\ra^\mu e^{{\rm i}kx^\beta}\psi_2^{(m)}(x)\overline{\psi_2^{(n)}(x)}dx\right|\leq Cn^{-1+\frac\mu2}.$$
Thus,
$$\left|\int^{X_m-X_m^{-\frac13}}_{0} \la x\ra^\mu e^{{\rm i}kx^\beta}h_m(x)\overline{h_n(x)}dx\right|\leq \frac{C}{n^{\frac14-\frac\mu2}}\leq \frac{C}{m^{\frac18-\frac\mu4}n^{\frac18-\frac\mu4}},\quad m_0<m\le n. $$
Case 2: $k>\frac{X_n^{2-\beta}}{8}>0$.\\
Since $m\leq n$, we have $2n\leq (8k)^{\frac{2}{2-\beta}}+1$. It follows that
$$
\left|\int_0^{X_m-X_m^{-\frac13}} \la x\ra^\mu e^{ {\rm i}kx^\beta}h_m(x)\overline{h_n(x)}dx\right| \leq CX_m^\mu\leq CX_m^\mu\frac{m^{\frac18} n^{\frac18}}{m^{\frac18}n^{\frac18}}\leq \frac{Ck^\frac{1}{4-2\beta}}{m^{\frac18-\frac\mu4} n^{\frac18-\frac\mu4}}.
$$
Combining with these two cases we finish the proof.
\end{proof}

\begin{lemma}\label{lemma2.65}
	For $k\neq 0$, if $X_n>2X_m$, then
	$$\left|\int_0^{X_m-X_m^{\frac{2}{3}}} \la x\ra^\mu e^{{\rm i}kx^{2}}h_m(x)\overline{h_n(x)}dx\right|\leq \frac{C}{m^{\frac{1}{8}-\frac\mu4} n^{\frac{1}{8}-\frac\mu4}}, $$
	where $m_0< m\leq n$.
\end{lemma}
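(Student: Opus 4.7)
The proof closely parallels that of Lemma \ref{lemma2.6}, with the $k$-case split adapted to the critical exponent $\beta=2$. As in Lemma \ref{lemma2.6}, I decompose $h_m=\psi_1^{(m)}+\psi_2^{(m)}$ and similarly for $h_n$; the three cross-terms containing a $\psi_2^{(\cdot)}$ factor are lower order and bounded exactly as there by $Cn^{-1/4+\mu/2}$, which is dominated by the target $Cm^{-1/8+\mu/4}n^{-1/8+\mu/4}$. The work reduces to estimating the principal oscillatory integral
\[
I\;=\;C\int_0^{X_m-X_m^{2/3}} \la x\ra^\mu\, e^{\rmi\phi(x)}\,\Psi(x)\,dx,\quad \phi(x)=\zeta_m(x)-\zeta_n(x)+kx^2,
\]
with $\Psi=(X_m^2-x^2)^{-1/4}(X_n^2-x^2)^{-1/4}f_m\overline{f_n}$ and $\phi'(x)=-g(x)$, $g(x)=\sqrt{X_n^2-x^2}-\sqrt{X_m^2-x^2}-2kx$.

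For $|k|\le 1/8$ I imitate Case 1 of Lemma \ref{lemma2.6} specialized to $\beta=2$, where the threshold $X_n^{2-\beta}/8$ degenerates to $1/8$. The hypothesis $X_n>2X_m$ gives $\sqrt{X_n^2-x^2}-\sqrt{X_m^2-x^2}\ge\frac{\sqrt 3-1}{2}X_n$ on $[0,X_m]$, and $|2kx|\le X_n/8$, so $|g(x)|\gtrsim X_n$ uniformly. Lemma \ref{oscillatory integral lemma} combined with the $L^\infty$ and bounded-variation controls from Corollary \ref{coro2.5} (the $J_1+J_3$ integrals being $O(X_m^{-1/3})$) yields $|I|\le CX_m^{-1/3+\mu}X_n^{-1}$, which easily beats the target.

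For $|k|>1/8$ the first-derivative bound can fail because the linear term $2kx$ may cancel $\sqrt{X_n^2-x^2}-\sqrt{X_m^2-x^2}$. The decisive feature of the limit $\beta=2$ is that the second derivative
\[
\phi''(x)\;=\;2k-\frac{x}{\sqrt{X_m^2-x^2}}+\frac{x}{\sqrt{X_n^2-x^2}}
\]
is essentially $2k$: on $[0,X_m-X_m^{2/3}]$ one has $\sqrt{X_m^2-x^2}\ge X_m^{5/6}$, so $x/\sqrt{X_m^2-x^2}\le X_m^{1/6}$, while $X_n>2X_m$ gives $x/\sqrt{X_n^2-x^2}\le 1/\sqrt 3$. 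As soon as $|k|\ge 2X_m^{1/6}$, $|\phi''(x)|\ge |k|$ holds uniformly, and the second-derivative van der Corput estimate (Corollary 3.2 of \cite{KT2005}) supplies a decay factor $|k|^{-1/2}\le CX_m^{-1/12}$; combined with $\sup|\la x\ra^\mu\Psi|\le CX_m^{-5/12+\mu}X_n^{-1/2}$ and a matching bounded-variation bound, this gives $|I|\le CX_m^{-1/2+\mu}X_n^{-1/2}=Cm^{\mu/2-1/4}n^{-1/4}$, which is dominated by $Cm^{-1/8+\mu/4}n^{-1/8+\mu/4}$ since $\mu<1/2$. The intermediate range $1/8<|k|<2X_m^{1/6}$ is then handled by isolating the (at most two) critical points of $\phi$ in the interval — their existence and multiplicity being controlled by the strict convexity of $g''(x)=\frac{X_m^2}{(X_m^2-x^2)^{3/2}}-\frac{X_n^2}{(X_n^2-x^2)^{3/2}}>0$, which makes $g$ U-shaped — and applying the standard stationary-phase localization: a $\delta$-neighborhood of each critical point is bounded trivially by $\delta\sup|\la x\ra^\mu\Psi|$, while on the complement the first-derivative oscillatory lemma applies with $|\phi'(x)|\ge c|k|\delta$; optimizing in $\delta$ recovers the same $|k|^{-1/2}$ decay.

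The main obstacle I foresee is the sharp tracking of the $L^\infty$ norm and bounded variation of $\la x\ra^\mu\Psi$ up to the cutoff $x=X_m-X_m^{2/3}$, where the factor $(X_m^2-x^2)^{-1/4}$ is singular of order $X_m^{-5/12}$. The choice of cutoff at $X_m^{2/3}$ — contrasted with $X_m^{-1/3}$ used in Lemma \ref{lemma2.6} for $1<\beta<2$ — is dictated precisely by the need to balance this singularity against the van der Corput gain $|k|^{-1/2}$ available only in the limit $\beta=2$; arranging the bookkeeping so that the product of these two competing scales matches the exponent $\frac18-\frac{\mu}{4}$ is the crux of the argument.
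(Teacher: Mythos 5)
Your small-$|k|$ and large-$|k|$ cases are sound, but the intermediate range $1/8<|k|<2X_m^{1/6}$ contains a genuine gap. You claim that outside $\delta$-neighborhoods of the critical points of $\phi$ one has $|\phi'(x)|\ge c|k|\delta$, and that optimizing in $\delta$ recovers $|k|^{-1/2}$. This lower bound is false in the degenerate configuration where the convex function $g(x)=\sqrt{X_n^2-x^2}-\sqrt{X_m^2-x^2}-2kx$ has its minimum at (or near) zero: then $\phi$ has a critical point $x_*$ at which $\phi''(x_*)=-g'(x_*)=0$, and near $x_*$ one only has $|\phi'(x)|\approx\frac12|g''(x_*)|\,|x-x_*|^2\gtrsim |x-x_*|^2/X_m$, with no factor of $|k|$. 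Carrying out the excision/optimization correctly in that case yields $\delta\sim X_m^{1/3}$ and a gain of $X_m^{1/3}$ rather than $|k|^{-1/2}$ --- which happens to still be sufficient for the stated bound, but is not what you wrote, and your argument as stated does not close. (Also, the second-derivative van der Corput estimate is Lemma \ref{oscillatory integral lemma} from Stein, not Corollary 3.2 of \cite{KT2005}, which is an $L^p$ eigenfunction bound.)

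The paper's proof avoids the case split on $k$ entirely, and this is precisely the point of the exponent $\beta=2$: the third derivative of the phase, $g''(x)=\frac{X_m^2}{(X_m^2-x^2)^{3/2}}-\frac{X_n^2}{(X_n^2-x^2)^{3/2}}$, does not see $k$ at all, and since $g'''\ge0$ one has $g''(x)\ge g''(0)=\frac1{X_m}-\frac1{X_n}\ge\frac12X_m^{-1}$ uniformly on $[0,X_m-X_m^{2/3}]$ for every $k\ne0$. A single application of Lemma \ref{oscillatory integral lemma} with the third derivative then gives the factor $CX_m^{1/3}$, which combined with the sup and bounded-variation bounds $\sup|\langle x\rangle^\mu\Psi|+\int|(\langle x\rangle^\mu\Psi)'|\le CX_m^{\mu-5/12}X_n^{-1/2}$ on this interval yields $CX_m^{\mu-1/12}X_n^{-1/2}\le C(X_mX_n)^{\mu/2-1/4}=C(mn)^{\mu/4-1/8}$. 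You correctly identified the role of the cutoff $X_m-X_m^{2/3}$ in controlling the singularity of $(X_m^2-x^2)^{-1/4}$, but the mechanism that makes the estimate uniform in $k$ is the $k$-independence of the third derivative, not a $|k|^{-1/2}$ stationary-phase gain.
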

\begin{proof}
	We first estimate the main part of the integral. By the oscillating integral estimate,
	\begin{align*}
	\int^{X_m-X_m^{\frac{2}{3}}}_{0} \la x\ra^\mu e^{{\rm i}kx^{2}}\psi_1^{(m)}(x)\overline{\psi_1^{(n)}(x)}dx= C\int^{X_m-X_m^{\frac{2}{3}}}_{0} \la x\ra^\mu e^{{\rm i}(\zeta_m-\zeta_n+kx^{2})}\Psi(x)dx,\\
	\end{align*}
	where
	$\Psi(x)=(X_m^2-x^2)^{-\frac{1}{4}} (X_n^2-x^2)^{-\frac{1}{4}}\cdot f_m(x)\overline{f_n(x)}.$
	Since
	$
	g''(x)\geq g''(0)=\frac{1}{X_m}-\frac{1}{X_n}\geq\frac12X_m^{-1},
	$
	 by Lemma \ref{oscillatory integral lemma},
	\begin{align*}
	&\left|\int^{X_m-X_m^{\frac{2}{3}}}_{0}e^{{\rm i}\frac{\zeta_m-\zeta_n+kx^2}{X_n}X_n}\la x\ra^\mu\Psi(x)dx\right|\\
	\leq& C{X_m^\frac13}\left(\left|\left(\la x\ra^\mu\Psi\right)(X_m-X_m^{\frac{2}{3}})\right|+\int_0^{X_m-X_m^{\frac{2}{3}}}\left|\left(\la x\ra^\mu\Psi\right)^\prime(x)\right|dx\right)\\
	\leq& C{X_m^{\frac13}}\left(X_m^\mu\left|\Psi(X_m-X_m^{\frac{2}{3}})\right| +\int_0^{X_m-X_m^{\frac{2}{3}}}\left(2\la x\ra^\mu\left(J_1+J_3\right)+\mu\la x\ra^{\mu-1}\frac{ x}{\la x\ra}\left|\Psi(x)\right|\right)dx\right)\\
	\leq& C{X_m^{\frac13}}X_m^\mu\left(\left|\Psi(X_m-X_m^{\frac{2}{3}})\right| +\int_0^{X_m-X_m^{\frac{2}{3}}}\left(J_1+J_3\right)dx\right)+C\mu {X_m^\frac13}\int_0^{X_m-X_m^{\frac{2}{3}}}\la x\ra^{\mu-1}\left|\Psi(x)\right|dx.
	\end{align*}
	The estimate comes from three terms. Clearly,  for $x\in[0,X_m-X_m^\frac23]$ we have 
	\[
	|\Psi(x)|\le C(X_m^2-x^2)^{-\frac14}(X_n^2-x^2)^{-\frac14}\le C(X_mX_n)^{-\frac14}(X_m-x)^{-\frac14}(X_n-x)^{-\frac14}\le CX_m^{-\frac5{12}}X_n^{-\frac12}.
	\]
It follows that 
\[ 
	X_m^\mu\left|\Psi(X_m-X_m^{\frac{2}{3}})\right| \le CX_m^{\mu-\frac5{12}}X_n^{-\frac12},
\]
	and
\[
	 \int_0^{X_m-X_m^{\frac23}}\mu \la x\ra^{\mu-1}\left|\Psi(x)\right|dx
	\leq CX_m^{-\frac5{12}}X_n^{-\frac12}\int_0^{X_m}\langle x\rangle^{\mu-1}dx\leq CX_m^{\mu-\frac5{12}}X_n^{-\frac12},
\]
	together with
\[
	\int_0^{X_m-X_m^{\frac{2}{3}}}J_1dx \leq C \int_0^{X_m-X_m^{\frac{2}{3}}} x(X_m^2-x^2)^{-\frac{5}{4}}(X_n^2-x^2)^{-\frac{1}{4}}dx
	\leq C X_m^{-\frac5{12}}X_n^{-\frac12},
\]
	and
\[
	\int_0^{X_m-X_m^{\frac{2}{3}}}J_3dx\leq CX_m^{-\frac34}X_n^{-\frac12}\int_0^{X_m-X_m^{\frac{2}{3}}}(X_m-x)^{-\frac{11}4}dx \leq C X_m^{-\frac5{12}}X_n^{-\frac12},
\]
	we obtain
	$ \left|\int^{X_m-X_m^{\frac{2}{3}}}_{0} \la x\ra^\mu e^{{\rm i}kx^2}\psi_1^{(m)}(x)\overline{\psi_1^{(n)}(x)}dx\right|\leq CX_m^{-\frac1{12}+\mu}X_n^{-\frac12}\le C(X_mX_n)^{\frac\mu2-\frac14}.$
	The estimate of rest parts of the integral is similar with Lemma \ref{lemma2.6}. Thus, 
	$$\left|\int^{X_m-X_m^{\frac23}}_{0} \la x\ra^\mu e^{ {\rm i}kx^2}h_m(x)\overline{h_n(x)}dx\right|\leq \frac{C}{m^{\frac18-\frac\mu4}n^{\frac18-\frac\mu4}},\quad m_0<m\le n. $$

\end{proof}

\begin{lemma}\label{lemma2.7}
If $X_n>2X_m$ and $1<\beta\leq 2$,  then
$$\left|\int_{X_m-X_m^{\nu_1}}^{X_m} \la x\ra^\mu e^{{\rm i}kx^\beta}h_m(x) \overline{h_n(x)}dx\right| \leq
\frac{C}{m^{\frac{3}{16}-\frac{3}{16}\nu_1-\frac\mu4} n^{\frac{3}{16}-\frac{3}{16}\nu_1-\frac\mu4}}, $$
where $m_0<m\leq n$ and 
\[
\nu_1=
\begin{cases}
	-1/3,& 1<\beta<2,\\
	2/3, &\beta=2.
\end{cases}
\]
\end{lemma}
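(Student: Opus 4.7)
The plan is to avoid oscillatory-integral methods on this window (where the phase derivative degenerates near the turning point of $h_m$) and instead apply Cauchy--Schwarz, isolating $h_m$ from $h_n$. Writing $I$ for the integral, one has $|I|^2\le A\cdot B$ with
\[
A:=\int_{X_m-X_m^{\nu_1}}^{X_m}\la x\ra^{2\mu}|h_m(x)|^2\,dx,\qquad
B:=\int_{X_m-X_m^{\nu_1}}^{X_m}|h_n(x)|^2\,dx.
\]
Under the hypothesis $X_n>2X_m$ the entire window lies in the deep oscillatory region of $h_n$: the standard amplitude bound $|h_n(x)|\le C(X_n^2-x^2)^{-1/4}$ sharpens to $|h_n(x)|\le CX_n^{-1/2}$, so $B\le CX_n^{-1}X_m^{\nu_1}$.

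To bound $A$ I would split the window at the Langer/Airy scale $X_m-X_m^{-1/3}$. On the Airy piece $[X_m-X_m^{-1/3},X_m]$ I use the uniform pointwise bound $|h_m(x)|\le CX_m^{-1/6}$, contributing $CX_m^{-2/3}$ to $\int|h_m|^2$. On the oscillatory piece $[X_m-X_m^{\nu_1},X_m-X_m^{-1/3}]$ (empty when $\nu_1=-1/3$, of size $X_m^{2/3}$ when $\nu_1=2/3$) the WKB-type bound $|h_m(x)|^2\le C(X_m^2-x^2)^{-1/2}$ together with the substitution $u=X_m-x$ yields a contribution of order $CX_m^{-1/2}\int_{X_m^{-1/3}}^{X_m^{\nu_1}}u^{-1/2}\,du\le CX_m^{\nu_1/2-1/2}$. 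This oscillatory contribution dominates when $\nu_1>-1/3$ and matches the Airy one when $\nu_1=-1/3$, so in both subcases $A\le CX_m^{2\mu+\nu_1/2-1/2}$ after absorbing $\la x\ra^{2\mu}\le CX_m^{2\mu}$.

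Combining gives $|I|\le CX_m^{\mu+3\nu_1/4-1/4}X_n^{-1/2}$. To recover the symmetric target $X_m^{-3/8+3\nu_1/8+\mu/2}X_n^{-3/8+3\nu_1/8+\mu/2}$ (i.e.\ $m^{-3/16+3\nu_1/16+\mu/4}n^{-3/16+3\nu_1/16+\mu/4}$), I would exploit the gap $X_n>2X_m$ through the identity
\[
X_m^{\mu+3\nu_1/4-1/4}X_n^{-1/2}=X_m^{-3/8+3\nu_1/8+\mu/2}X_n^{-3/8+3\nu_1/8+\mu/2}\Big(\tfrac{X_m}{X_n}\Big)^{\mu/2+3\nu_1/8+1/8},
\]
noting that the final exponent is nonnegative in both cases ($\mu/2$ when $\nu_1=-1/3$, $\mu/2+3/8$ when $\nu_1=2/3$), so the correction factor is $\le 1$. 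Translating $X_m^2\sim m$, $X_n^2\sim n$ then gives the claimed bound. The main obstacle I anticipate is exactly this last symmetrization: the raw estimate is asymmetric (only $X_n^{-1/2}$ emerges from $h_n$), and without the gap $X_n>2X_m$ it cannot be redistributed in $m,n$. The case $\beta=2$ is also the harder half, since the window extends a distance $X_m^{2/3}$ well into the oscillatory regime of $h_m$ and a crude uniform bound on $h_m$ would fail to close; the $L^2$-refinement of the WKB amplitude bound is essential there.
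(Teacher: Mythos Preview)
Your proof is correct, but it takes a somewhat more elaborate route than the paper. The paper does not use Cauchy--Schwarz or an Airy-scale splitting at all: it simply invokes the pointwise amplitude bound coming from the Langer representation,
\[
|\psi_1^{(m)}(x)\,\overline{\psi_1^{(n)}(x)}|\le C(X_m^2-x^2)^{-1/4}(X_n^2-x^2)^{-1/4},
\]
bounds $\langle x\rangle^\mu\le CX_m^\mu$ and $(X_n^2-x^2)^{-1/4}\le CX_n^{-1/2}$ on the window (using $X_n>2X_m$), and integrates the remaining factor $X_m^{-1/4}(X_m-x)^{-1/4}$ directly. Because the $-1/4$ singularity is integrable, this yields at once
\[
|I|\le C\,X_m^{\mu-1/4+3\nu_1/4}X_n^{-1/2},
\]
which is exactly your intermediate bound, and the symmetrization step is the same as yours. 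The Cauchy--Schwarz route forces you to square, turning the $-1/4$ singularity into a $-1/2$ one; since $u^{-1/2}$ is still integrable, your Airy splitting for $A$ is in fact unnecessary (the WKB bound $|h_m|^2\le C(X_m^2-x^2)^{-1/2}$, read as an upper bound, persists to $x=X_m$), though including it does no harm. Your approach has the mild advantage of working directly with $h_m,h_n$ rather than the auxiliary pieces $\psi_j^{(m)},\psi_j^{(n)}$; the paper's approach is shorter.
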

\begin{proof}
First,
\begin{align*}
&\left|\int_{X_m-X_m^{\nu_1}}^{X_m} \la x\ra^\mu e^{{\rm i}kx^\beta}\psi_1^{(m)}(x) \overline{\psi_1^{(n)}(x)}dx\right|\\
\leq&C\int_{X_m-X_m^{\nu_1}}^{X_m} \la x\ra^\mu(X_m^2-x^2)^{-\frac{1}{4}}(X_n^2-x^2)^{-\frac{1}{4}}dx\\
\leq& C X_m^{-\frac{1}{4}+\mu} (X_n^2-X_m^2)^{-\frac{1}{4}}\int_{X_m-X_m^{\nu_1}}^{X_m}(X_m-x)^{-\frac{1}{4}}dx\\
\leq& C X_m^{-\frac{1}{4}+\mu} (X_n^2-\frac{X_n^2}{4})^{-\frac{1}{4}}X_m^{\frac{3}{4}\nu_1}\leq C X_m^{-\frac{3}{8}+\frac{3}{8}\nu_1+\frac\mu2} X_n^{-\frac{3}{8}+\frac{3}{8}\nu_1+\frac\mu2}.
\end{align*}
Similarly,
$$\left|\int_{X_m-X_m^{\nu_1}}^{X_m} \la x\ra^\mu e^{{\rm i}kx^\beta}\psi_{j_1}^{(m)}(x) \overline{\psi_{j_2}^{(n)}(x)}dx\right| \leq C X_m^{-\frac{3}{8}+\frac{3}{8}\nu_1+\frac\mu2} X_n^{-\frac{3}{8}+\frac{3}{8}\nu_1+\frac\mu2},\  \ j_1,j_2\in\{1,2\}.$$
Thus we finish the proof.
\end{proof}

\begin{lemma}\label{lemma2.8}
When $X_n > 2X_m$ and $1<\beta\leq 2$, 
$$\left|\int_{X_m}^{X_n}\la x\ra^\mu e^{{\rm i}kx^\beta}h_m(x)\overline{h_n(x)}dx\right|\leq \frac{C}{m^{\frac{1}{8}-\frac\mu4}n^{\frac{1}{12}-\frac\mu4}}, $$
where $m_0<m\leq n$. 
\end{lemma}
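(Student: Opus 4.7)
The plan is to estimate the integral purely by bounding the integrand in absolute value; because $|e^{{\rm i}kx^\beta}|=1$, both $k$ and $\beta$ drop out of the estimate, consistently with the form of the stated bound (which involves neither). Following the pattern of the preceding lemmas in Section \ref{S4}, I decompose $h_m=\psi_1^{(m)}+\psi_2^{(m)}$ and $h_n=\psi_1^{(n)}+\psi_2^{(n)}$; the three cross-products containing a $\psi_2$ carry an additional factor $O(\lambda_m^{-1})$ or $O(\lambda_n^{-1})$ from Remark 4.5 in \cite{LiangLuo2021} and are handled in the same way with a strictly better power, so I focus on the main term $\int_{X_m}^{X_n}\la x\ra^\mu e^{{\rm i}kx^\beta}\psi_1^{(m)}(x)\overline{\psi_1^{(n)}(x)}\,dx$. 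On $[X_m,X_n]$ the point $x$ lies past the turning point of $h_m$ but still inside the turning point of $h_n$, so Lemmas 5.4--5.5 in \cite{LiangLuo2021} give
\[
|\psi_1^{(m)}(x)|\le C(x^2-X_m^2)^{-1/4}e^{-|\zeta_m(x)|},\qquad |\psi_1^{(n)}(x)|\le C(X_n^2-x^2)^{-1/4},
\]
together with $|\zeta_m(x)|\ge \tfrac{2\sqrt{2}}{3}X_m^{1/2}(x-X_m)^{3/2}$ and the uniform near-turning-point bound $|\psi_1^{(m)}(x)|\le CX_m^{-1/6}$ on $[X_m,X_m+X_m^{-1/3}]$ coming from the small-argument asymptotics of $\sqrt{\zeta_m/2}H^{(1)}_{1/3}(\zeta_m)$.

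Next I split $[X_m,X_n]=A\cup B$ with $A=[X_m,X_m+X_m^{-1/3}]$ and $B=[X_m+X_m^{-1/3},X_n]$. On the boundary layer $A$ the hypothesis $X_n>2X_m$ gives $X_n^2-x^2\ge X_n^2-(X_m+X_m^{-1/3})^2\ge X_n^2/2$ (for $m\ge m_0$), hence $|\psi_1^{(n)}(x)|\le CX_n^{-1/2}$. Combined with $|\psi_1^{(m)}(x)|\le CX_m^{-1/6}$, $\la x\ra^\mu\le CX_m^\mu$, and the length $X_m^{-1/3}$ of $A$, this contributes
\[
\int_A\le CX_m^{\mu-1/2}X_n^{-1/2}\ \sim\ m^{\mu/2-1/4}n^{-1/4},
\]
which is dominated by $C m^{\mu/4-1/8}n^{\mu/4-1/12}$ for any $\mu\le 1/2$, in particular in our admissible range.

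On $B$ I exploit the full exponential decay $e^{-|\zeta_m(x)|}$. After substituting $y=x-X_m$ and setting $L=X_n-X_m$, and bounding $(x+X_m)^{-1/4}\le X_m^{-1/4}$ as well as $(X_n+x)^{-1/4}\le X_n^{-1/4}$, the problem reduces to controlling
\[
I:=\int_{X_m^{-1/3}}^{L}y^{-1/4}(L-y)^{-1/4}e^{-cX_m^{1/2}y^{3/2}}\,dy.
\]
The hypothesis $X_n>2X_m$ enters crucially here through $L\ge X_n/2$. Splitting the $y$-integral at $L/2$: on $[X_m^{-1/3},L/2]$ one has $(L-y)^{-1/4}\le (L/2)^{-1/4}\le CX_n^{-1/4}$, and the remaining $\int_0^\infty y^{-1/4}e^{-cX_m^{1/2}y^{3/2}}\,dy$ equals $CX_m^{-1/4}$ after the change of variables $u=cX_m^{1/2}y^{3/2}$; on $[L/2,L]$ one has $y\ge L/2\ge X_m/2$, so $e^{-cX_m^{1/2}y^{3/2}}\le e^{-cX_m^2}$ and this part is super-exponentially small in $m$. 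Hence $I\le CX_m^{-1/4}X_n^{-1/4}$, and multiplying by $\la x\ra^\mu\le CX_n^\mu$ gives $\int_B\le CX_m^{-1/2}X_n^{\mu-1/2}\sim m^{-1/4}n^{\mu/2-1/4}\le C m^{\mu/4-1/8}n^{\mu/4-1/12}$; summing with Part A yields the desired bound.

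The principal obstacle is the competition between the integrable singularity $(X_n-x)^{-1/4}$ of $\psi_1^{(n)}$ near $X_n$ and the $(x-X_m)^{-1/4}$ singularity of $\psi_1^{(m)}$ near $X_m$: the exponential factor $e^{-|\zeta_m(x)|}$ dampens the latter but on its own is too weak to absorb the former. The hypothesis $X_n>2X_m$ resolves this by forcing the gap $L=X_n-X_m\ge X_n/2$, which is precisely what permits the extraction of the extra $X_n^{-1/4}$ factor on the bulk of the $y$-integral where the exponential has not yet activated; without this gap the $n$-decay would fall short by a factor $n^{1/12}$.
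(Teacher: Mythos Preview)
Your proof is correct and follows essentially the same strategy as the paper's: bound the integrand by absolute value (discarding the phase $e^{\rmi kx^\beta}$) and exploit the exponential decay of $\psi_1^{(m)}$ past its turning point. The paper splits $[X_m,X_n]$ into four subintervals $[X_m,X_m+X_m^{-1/3}]$, $[X_m+X_m^{-1/3},\tfrac32X_m]$, $[\tfrac32X_m,X_n-X_n^{-1/3}]$, $[X_n-X_n^{-1/3},X_n]$, using only the weaker bound $|\zeta_m|\ge c(x-X_m)$ and absorbing $\la x\ra^\mu$ into the gamma integral on the third piece; you instead use two intervals $A\cup B$, the sharper exponent $e^{-cX_m^{1/2}y^{3/2}}$, the uniform Airy-type bound $|\psi_1^{(m)}|\le CX_m^{-1/6}$ on $A$, and a secondary split of the $y$-integral at $L/2$. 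These are minor technical variations; the core mechanism and the role of the hypothesis $X_n>2X_m$ (ensuring $L\ge X_n/2$ so that the $(L-y)^{-1/4}$ singularity is separated from the region where the exponential has not yet kicked in) are the same in both arguments.
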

\begin{proof}
When $X_n>2X_{m_0}$ , $X_m+X_m^{-\frac{1}{3}}\leq  \frac{X_n}{2}+1\leq\frac{3}{4}X_n$. It follows 
\begin{align*}
&\left|\int^{X_m+X_m^{-\frac{1}{3}}}_{X_m}\la x\ra^\mu e^{{\rm i}kx^\beta}\psi_1^{(m)}(x)\overline{\psi_1^{(n)}(x)}dx\right|\\
\leq &CX_m^\mu\int^{X_m+X_m^{-\frac{1}{3}}}_{X_m}(x^2-X_m^2)^{-\frac{1}{4}} (X_n^2-x^2)^{-\frac{1}{4}}dx\\
\leq& CX_m^{-\frac{1}{4}+\mu}\left(X_n^2-(X_m+X_m^{-\frac{1}{3}})^2\right)^{-\frac{1}{4}} \int_{X_m}^{X_m+X_m^{-\frac{1}{3}}}(x-X_m)^{-\frac{1}{4}}dx\\
\leq& CX_m^{-\frac12+\frac\mu2}X_n^{-\frac12+\frac\mu2}.
\end{align*}
From $X_n> 2X_m$, we have $X_n-X_n^{-\frac13}\geq\frac32X_m$, together with Lemma 5.5 in \cite{LiangLuo2021}, thus 
\begin{align*}
&\left|\int_{X_m+X_m^{-\frac{1}{3}}}^{\frac32X_m} \la x\ra^\mu e^{{\rm i}kx^\beta}\psi_1^{(m)}(x) \overline{\psi_1^{(n)}(x)}dx\right|\\
\leq& CX_m^\mu\int_{X_m+X_m^{-\frac{1}{3}}}^{\frac32X_m} (x^2-X_m^2)^{-\frac{1}{4}} (X_n^2-x^2)^{-\frac{1}{4}}e^{{\rm i}\zeta_m}dx\\
\leq&C X_m^{-\frac{1}{4}+\mu}\left(X_n^2-(X_n-X_n^{-\frac{1}{3}})^2\right)^{-\frac{1}{4}} \int_{X_m+X_m^{-\frac{1}{3}}}^{\frac32X_m} (x-X_m)^{-\frac{1}{4}} e^{-(x-X_m)}dx\\
\leq&C X_m^{-\frac{1}{4}+\mu}X_n^{-\frac{1}{6}}\int_0^\infty t^{-\frac{1}{4}}e^{-t}dt\leq C X_m^{-\frac{1}{4}+\frac\mu2}X_n^{-\frac{1}{6}+\frac\mu2}.
\end{align*}
When $x\geq \frac32X_m$, $x-X_m\geq\frac13x$, it follows 
\begin{align*}
&\left|\int_{\frac32X_m}^{X_n-X_n^{-\frac{1}{3}}} \la x\ra^\mu e^{{\rm i}kx^\beta}\psi_1^{(m)}(x) \overline{\psi_1^{(n)}(x)}dx\right|\\
\leq& C\int_{\frac32X_m}^{X_n-X_n^{-\frac{1}{3}}} \la x\ra^\mu(x^2-X_m^2)^{-\frac{1}{4}} (X_n^2-x^2)^{-\frac{1}{4}}e^{{\rm i}\zeta_m}dx\\
\leq&C X_m^{-\frac{1}{4}}\left(X_n^2-(X_n-X_n^{-\frac{1}{3}})^2\right)^{-\frac{1}{4}} \int_{\frac32X_m}^{X_n-X_n^{-\frac{1}{3}}} (x-X_m)^{-\frac{1}{4}+\mu} e^{-(x-X_m)}dx\\
\leq&C X_m^{-\frac{1}{4}}X_n^{-\frac{1}{6}}\int_0^\infty t^{-\frac{1}{4}+\mu}e^{-t}dt\leq C X_m^{-\frac{1}{4}}X_n^{-\frac{1}{6}}.
\end{align*}
Thus, 
\begin{align*}
&\left|\int_{X_n-X_n^{-\frac{1}{3}}}^{X_n}\la x\ra^\mu e^{{\rm i}kx^\beta}\psi_1^{(m)}(x) \overline{\psi_1^{(n)}(x)}dx\right|\\
\leq& C\int_{X_n-X_n^{-\frac{1}{3}}}^{X_n}x^\mu (x^2-X_m^2)^{-\frac{1}{4}}(X_n^2-x^2)^{-\frac{1}{4}}e^{{\rm i}\zeta_m}dx\\
\leq& C  \left((X_n-X_n^{-\frac{1}{3}})^2-X_m^2\right)^{-\frac{1}{4}}X_n^{-\frac{1}{4}} \int_{X_n-X_n^{-\frac{1}{3}}}^{X_n} (X_n-x)^{-\frac{1}{4}}x^\mu e^{-\frac13x}dx\\
\leq& C X_m^{-\frac12}X_n^{-\frac{1}{4}} \int_{X_n-X_n^{-\frac{1}{3}}}^{X_n} (X_n-x)^{-\frac{1}{4}}dx\\
\leq&
 CX_m^{-\frac{1}{2}}X_n^{-\frac{1}{2}} \leq C(X_mX_n)^{-\frac12+\frac\mu2}.
\end{align*}
Combining with all the above, we have 
$\left|\int_{X_m}^{X_n}\la x\ra^\mu e^{{\rm i}kx^\beta}\psi_1^{(m)}(x) \overline{\psi_1^{(n)}(x)} dx \right|\leq \frac{C}{m^{\frac{1}{8}-\frac\mu4}n^{\frac{1}{12}-\frac\mu4}}$.  The rest estimates are similar as above.  
\end{proof}
Combining with Lemma \ref{lemma2.6}, \ref{lemma2.65}, \ref{lemma2.7}, \ref{lemma2.8}, we finish the proof of Lemma \ref{lemma2.4}.

\subsubsection{The integral estimate on $[0, X_n]$ when  $X_m\leq X_n\leq 2X_m$.}\label{sub4.2.3}
 One can split the integral into
$$\int^{X_n}_0 \la x\ra^\mu e^{{\rm i}kx^{\beta}}h_m(x)\overline{h_n(x)}dx = \left(\int^{X_m^\frac23}_0+\int_{X_m^\frac23}^{X_m-X_m^{\nu_2}}+ \int_{X_m-X_m^{\nu_2}}^{X_n}\right) \la x\ra^\mu e^{{\rm i}kx^\beta}h_m(x)\overline{h_n(x)}dx,$$
and estimate them respectively, where 
\[
\nu_2=
\begin{cases}
	1-\frac{\beta}{3},& 1<\beta<2,\\
	\frac59, &\beta=2.
\end{cases}
\]
Our main aim in this part  is to build the following two lemmas. 
\begin{lemma}\label{4.2.3}
For $X_m \leq X_n \leq 2X_m$ and $k\neq 0$, 
$$\displaystyle\left|\int_0^{X_n} \la x\ra^\mu e^{{\rm i}kx^2}h_m(x)\overline{h_n(x)}dx\right|\leq \frac{ C(|k|^{-1}\vee 1)}{m^{\frac{1}{18}-\frac\mu4} n^{\frac{1}{18}-\frac\mu4}},$$
where $C>0$, $m_0< m\leq n$.
\end{lemma}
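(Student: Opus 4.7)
Following Section \ref{sub4.2.3}, I split the integral at $X_m^{2/3}$ and $X_m - X_m^{5/9}$. As in Lemma \ref{lemma2.6}, the lower-order contributions from $\psi_2^{(m)}$ and $\psi_2^{(n)}$ are handled by direct estimates, so the main task is to control the principal term
\[
\int_0^{X_n} \la x\ra^\mu e^{\rmi(\zeta_m - \zeta_n + kx^2)}\Psi(x)\,dx,
\]
where $\Psi(x) = (X_m^2 - x^2)^{-1/4}(X_n^2 - x^2)^{-1/4} f_m(x)\overline{f_n(x)}$ and the phase derivative is $g(x) = \sqrt{X_n^2 - x^2} - \sqrt{X_m^2 - x^2} - 2kx$.

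For the outermost piece $[X_m - X_m^{5/9}, X_n]$, I would abandon oscillation and rely on pointwise bounds on the Hermite functions: $|h_j(x)| \le C(X_j^2 - x^2)^{-1/4}$ in the oscillatory region below $X_j$, Airy-type bounds of size $X_j^{-1/6}$ in a transition window of width $X_j^{-1/3}$ about $X_j$, and exponential decay past $X_j$ as in Lemma \ref{Xntowuqiong}. Splitting the interval at $X_m$ and $X_m + X_m^{-1/3}$ and integrating directly, using $X_m \le X_n \le 2X_m$, should produce a contribution of size $(mn)^{-1/18 + \mu/4}$. For the innermost piece $[0, X_m^{2/3}]$, both amplitudes are far from their turning points, so $\Psi$ and $\Psi'$ are uniformly $O(X_m^{-1/2})$. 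Applying Lemma \ref{oscillatory integral lemma} together with a case split on the size of $|k|$, large $|k|$ being absorbed into the factor $|k|^{-1}$ and small $|k|$ being controlled by the lower bound $g(0) = X_n - X_m \geq 0$ and the explicit form of $g'$, yields the required decay.

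The main obstacle, flagged in the introduction, is the middle piece $[X_m^{2/3}, X_m - X_m^{5/9}]$, where $\Psi$ blows up like $(X_m - x)^{-1/4}$ near the upper endpoint while $g$ may vanish inside the interval since $X_n - X_m$ can be small. My plan is a multi-case argument on the sign and size of $k$ combined with the location of any critical point of the phase. For $k \le 0$, the bracket in
\[
g'(x) = x\bigl[(X_m^2 - x^2)^{-1/2} - (X_n^2 - x^2)^{-1/2}\bigr] - 2k
\]
is nonnegative, so $g$ is nondecreasing and stays $\ge g(0) = X_n - X_m \ge 0$; applying the first-derivative form of Lemma \ref{oscillatory integral lemma} and comparing the boundary term at $x = X_m - X_m^{5/9}$ with the amplitude-derivative contribution shows that the exponent $1/18$ is exactly saturated, which is why $\nu_2 = 5/9$ is the correct threshold. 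For $k > 0$ I would further distinguish a large-$k$ regime, where $|k|^{-1}$ alone yields the stated bound via a trivial estimate, from an intermediate regime, where the strict monotonicity of the bracket above isolates at most one critical point $x_0$ of $g$; sub-intervals away from $x_0$ are handled by the first-derivative form of Lemma \ref{oscillatory integral lemma}, while a neighborhood of $x_0$ is treated by the second-derivative version after noting that $g''$ has controlled size and definite sign on the interval. The hardest point to verify is that in every subcase the lower bound on $|g|$ or $|g''|$ beats the $(X_m - x)^{-1/4}$ blow-up of the amplitude integrated up to $X_m - X_m^{5/9}$, and that the boundary term at the upper endpoint does not dominate.
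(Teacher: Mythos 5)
Your decomposition at $X_m^{2/3}$ and $X_m-X_m^{5/9}$ and your treatment of the two endpoint pieces match the paper: Lemma \ref{lemma2.9}, Lemma \ref{lemma2.18} and the final (unlabeled) lemma of Section \ref{sub4.2.3} are all direct absolute-value estimates, and in fact no oscillation is needed on $[0,X_m^{2/3}]$ either, since $|\Psi|\lesssim X_m^{-1}$ there and the interval has length $X_m^{2/3}$. The gap is in the middle piece $[X_m^{2/3},X_m-X_m^{5/9}]$, and it is genuine on three counts. First, for $k\le 0$ the lower bound you invoke, $g\ge g(0)=X_n-X_m\ge 0$, is useless: it vanishes when $m=n$ and is in general incomparable to the quantities you need. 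What makes the first-derivative form of Lemma \ref{oscillatory integral lemma} work is that on this interval $x\ge X_m^{2/3}$, so $g(x)\ge -2kx\ge 2|k|X_m^{2/3}$; this is exactly Lemma \ref{lemma2.16}, and it is the source of the $|k|^{-1}$ in the statement. Second, your plan to absorb large $|k|$ into the factor $|k|^{-1}$ is backwards: $|k|^{-1}\vee 1=1$ for $|k|\ge1$, so large $k$ buys you nothing, and the trivial bound $CX_m^{\mu}$ on the middle piece does not reach $(mn)^{-\frac1{18}+\frac\mu4}$.

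Third, and most importantly, for $k>0$ the critical-point-isolation scheme does not close as described. Near a stationary point $x_0$ of the phase you would need a uniform lower bound on the phase's second derivative, i.e.\ on $|g'|$; but $g'(x)=x\bigl[(X_m^2-x^2)^{-1/2}-(X_n^2-x^2)^{-1/2}\bigr]-2k$ can itself change sign inside the interval (the bracket term grows from nearly $0$ at $x=X_m^{2/3}$ to large near $X_m-X_m^{5/9}$), so ``controlled size and definite sign'' fails in general and you are pushed into yet another localization with no quantitative output specified. The paper's mechanism avoids localizing at $x_0$ entirely via a dichotomy on the size of $X_n^2-X_m^2$ relative to $kX_m^{4/3}$: if $X_n^2-X_m^2\le kX_m^{4/3}$, the bracket term is at most $k/2$ on the whole interval, hence $g'\le-\frac32k$ uniformly and the second-derivative van der Corput estimate applies globally (Lemma \ref{lemma2.10}, contributing $k^{-1/2}$); if $X_n^2-X_m^2\ge kX_m^{4/3}$, then $g''\ge g''(0)=\frac1{X_m}-\frac1{X_n}\ge\frac k6X_m^{-5/3}$ uniformly and the third-derivative estimate applies globally (Lemma \ref{lemma2.11}, contributing $k^{-1/3}X_m^{5/9}$, which against the amplitude variation $\sim X_m^{-\frac79+\mu}$ yields precisely the exponent $\frac1{18}$). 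This comparison of $X_n^2-X_m^2$ with $kX_m^{4/3}$ is the key idea missing from your proposal.
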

\begin{lemma}\label{4.2.3-1}
For $X_m \leq X_n \leq 2X_m$, $k\neq 0$ and $1<\beta<2$, 
$$\displaystyle\left|\int_0^{X_n} \la x\ra^\mu e^{{\rm i}kx^{\beta}}h_m(x)\overline{h_n(x)}dx\right|\leq \frac{C(|k|^{-1}\vee |\beta(\beta-1)(\beta-2)k|^{-\frac13}\vee 1)}{m^{\frac{\beta}{24}-\frac\mu4} n^{\frac{\beta}{24}-\frac\mu4}},$$
where $C>0$, $m_0< m\leq n$.
\end{lemma}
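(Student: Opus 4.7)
The plan is to mirror the three-piece splitting used for Lemma \ref{4.2.3}, namely
\[
\int_0^{X_n} = \int_0^{X_m^{2/3}} + \int_{X_m^{2/3}}^{X_m - X_m^{\nu_2}} + \int_{X_m - X_m^{\nu_2}}^{X_n}, \qquad \nu_2 = 1 - \tfrac{\beta}{3},
\]
and in each piece substitute the Langer WKB decomposition $h_m = \psi_1^{(m)} + \psi_2^{(m)}$ (and similarly for $h_n$), so that the leading contribution is an oscillatory integral with phase $\Phi_\pm(x) = \pm\zeta_m(x) \mp \zeta_n(x) + k x^\beta$ and amplitude $\la x\ra^\mu (X_m^2 - x^2)^{-1/4}(X_n^2 - x^2)^{-1/4} f_m(x)\overline{f_n(x)}$, while the cross terms involving $\psi_2^{(m)}$ or $\psi_2^{(n)}$ are of strictly lower order and controlled by a direct $L^\infty \cdot L^1$ estimate as in Lemmas \ref{lemma2.6}--\ref{lemma2.8}.

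On the near-origin strip $[0, X_m^{2/3}]$, both weights $(X_m^2 - x^2)^{-1/4}$ and $(X_n^2 - x^2)^{-1/4}$ are comparable to $X_m^{-1/2}$. For the resonant phase $\Phi_-$ I would distinguish two regimes in $k$: when $|k|\beta x^{\beta-1}$ dominates the square-root piece of the first-derivative bound, a single integration by parts via Lemma \ref{oscillatory integral lemma} produces the $|k|^{-1}$ factor; when the $\zeta$-contribution and the polynomial piece are of comparable size, the third derivative of $\Phi_-$ is dominated by $\beta(\beta-1)(\beta-2)k x^{\beta-3}$, so the third-derivative van der Corput estimate produces the $|\beta(\beta-1)(\beta-2)k|^{-1/3}$ factor. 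On the tail $[X_m - X_m^{\nu_2}, X_n]$ both $h_m$ and $h_n$ are controlled pointwise by $|X_m^2-x^2|^{-1/4}$ or by exponential Airy decay past the turning points, and the H\"older-type estimate used in Lemma \ref{lemma2.7}, after replacing the exponent $\nu_1$ by $\nu_2 = 1 - \beta/3$, yields exactly the target $X_m^{-\beta/12 + \mu/2}$ bound.

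The main obstacle is the intermediate region $[X_m^{2/3}, X_m - X_m^{\nu_2}]$, where neither factor $(X_{m,n}^2 - x^2)^{-1/4}$ is tame near the upper endpoint, and the first derivative of the phase
\[
g(x) = \sqrt{X_n^2 - x^2} - \sqrt{X_m^2 - x^2} - \beta k x^{\beta-1}
\]
can vanish for an intermediate range of $|k|$ comparable to $X_m^{2-\beta}$. I plan a three-case split according to the dominant term in $g$. When $|\beta k x^{\beta-1}|$ dominates both square-root contributions throughout the interval, a first-derivative integration by parts gives the $|k|^{-1}$ factor. When the square-root and polynomial pieces compete, I differentiate once more and verify that the term $\beta(\beta-1)(\beta-2)k x^{\beta-3}$ dominates the $\zeta$-contribution $x/\sqrt{X_m^2 - x^2} - x/\sqrt{X_n^2 - x^2}$ on $[X_m^{2/3}, X_m - X_m^{\nu_2}]$; the third-derivative van der Corput estimate (Lemma \ref{oscillatory integral lemma}) then delivers the $|\beta(\beta-1)(\beta-2)k|^{-1/3}$ factor. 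When $|k|$ is so small that $\sqrt{X_n^2 - x^2} - \sqrt{X_m^2 - x^2}$ dominates, the unperturbed first-derivative bound drives the estimate and accounts for the $\vee 1$ term. In every sub-case the total variation of $\la x\ra^\mu \Psi(x)$ together with its boundary values at $X_m^{2/3}$ and $X_m - X_m^{\nu_2}$ is bounded by $C X_m^{-\beta/12 + \mu/2}$, which combined with the oscillatory-integral gain yields the claimed $(mn)^{-\beta/24 + \mu/4}$ bound. The choice $\nu_2 = 1 - \beta/3$ is precisely what balances the boundary contribution against the oscillatory gain in the most delicate sub-case, and is the point at which this analysis is genuinely more intricate than the corresponding argument in \cite{LiangLuo2021}.
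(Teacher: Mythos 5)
Your overall architecture is the paper's: the same three-piece splitting at $X_m^{2/3}$ and $X_m-X_m^{\nu_2}$ with $\nu_2=1-\beta/3$, the Langer decomposition isolating the oscillatory main term with amplitude $\la x\ra^\mu\Psi(x)$, and Lemma \ref{oscillatory integral lemma} applied with a first-derivative bound (yielding $|k|^{-1}$) or a third-derivative bound of the phase (yielding $|\beta(\beta-1)(\beta-2)k|^{-1/3}$) on the middle interval; the endpoint bookkeeping $|\Psi(X_m-X_m^{\nu_2})|\le CX_m^{-1+\beta/6}$ against the van der Corput gain $X_m^{1-\beta/3}$ is exactly what fixes $\nu_2$. (Two small corrections: the $\vee\,1$ in the constant comes from the non-oscillatory pieces on $[0,X_m^{2/3}]$, $[X_m-X_m^{\nu_2},X_m]$ and $[X_m,X_n]$, not from a small-$|k|$ regime, and the total variation of $\la x\ra^\mu\Psi$ on the middle interval is of size $X_m^{-1+\beta/6+\mu}$, not $X_m^{-\beta/12+\mu/2}$, though the product with the gain does come out right.)

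The genuine gap is in your trichotomy on $[X_m^{2/3},X_m-X_m^{\nu_2}]$ "according to the dominant term in $g$". Your third case — $|k|$ so small that $\sqrt{X_n^2-x^2}-\sqrt{X_m^2-x^2}$ dominates and "the unperturbed first-derivative bound drives the estimate" — is vacuous precisely in the hardest situation: here $X_m\le X_n\le 2X_m$, so $n-m$ may be $0$ or $O(1)$, in which case $\zeta_n-\zeta_m$ is identically zero or tiny and supplies no oscillation at all; yet the trivial bound on this interval is only $O(X_m^\mu)$, so some oscillation from $kx^\beta$ is indispensable for every $k\ne0$. Your first case also needs monotonicity of $g$ for the $k=1$ van der Corput estimate, which fails in general when $k>0$ since $-\beta(\beta-1)kx^{\beta-2}$ then fights the (nonnegative) $\zeta$-contribution to $g'$. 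The clean repair — and the paper's actual case split — is by the sign of $k$ rather than by dominant term: for $k<0$ all three terms of $g$ are nonnegative and $g'>0$, so the first-derivative estimate with lower bound $|k|\beta X_m^{\frac23(\beta-1)}$ applies (Lemma \ref{lemma2.16}); for $k>0$ both contributions to the third derivative of the phase are nonnegative, so $g''\ge-\beta(\beta-1)(\beta-2)kX_m^{\beta-3}>0$ uniformly and the third-derivative estimate applies with no comparison of term sizes needed (Lemma \ref{lemma2.14}). With that reorganization your argument closes; as written, the case analysis does not cover $n$ close to $m$ with $|k|$ small.
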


From a straightforward computation we have 
\begin{lemma}\label{lemma2.9}
For $X_m \leq X_n \leq 2X_m$ and $1<\beta\leq 2$, 
$$\displaystyle\left|\int_0^{X_m^\frac23} \la x\ra^\mu e^{{\rm i}kx^\beta}h_m(x)\overline{h_n(x)}dx\right|\leq \frac{C}{m^{\frac{1}{12}-\frac\mu4} n^{\frac{1}{12}-\frac\mu4}},$$
where $C>0$, $m_0< m\leq n$.
\end{lemma}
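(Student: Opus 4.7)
My plan is to exploit the fact that the integration window $[0,X_m^{2/3}]$ lies well inside both classical turning points $X_m\le X_n$, so no stationary-phase cancellation is required: pointwise size bounds on $h_m$ and $h_n$ combined with a crude length estimate for $\la x\ra^\mu$ already deliver the claim, uniformly in $k$. Observe first that for $m>m_0$ sufficiently large and any $x\in[0,X_m^{2/3}]$ one has $X_m^2-x^2\ge X_m^2-X_m^{4/3}\ge \frac12 X_m^2$ and hence $X_n^2-x^2\ge\frac12 X_n^2$ (since $X_n\ge X_m$), so $(X_m^2-x^2)^{-1/4}\le CX_m^{-1/2}$ and similarly $(X_n^2-x^2)^{-1/4}\le CX_n^{-1/2}$.

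Next, I would invoke the Langer decomposition $h_m=\psi_1^{(m)}+\psi_2^{(m)}$ recalled in Section \ref{S4}, with $\psi_1^{(m)}(x)=C(X_m^2-x^2)^{-1/4}e^{{\rm i}\zeta_m(x)}f_m(x)$ and $|\psi_2^{(m)}|\le C\lambda_m^{-1}|\psi_1^{(m)}|$. Since $|f_m(x)|\le\Gamma(5/6)$ for $x\in[0,X_m]$ (by the integral representation displayed in Section \ref{S4}) and $|e^{{\rm i}\zeta_m(x)}|=1$ on the real axis, this yields the uniform bound $|h_m(x)|\le C(X_m^2-x^2)^{-1/4}\le CX_m^{-1/2}$, and analogously $|h_n(x)|\le CX_n^{-1/2}$ on the entire interval.

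Combining these bounds with $\int_0^{X_m^{2/3}}\la x\ra^\mu\,dx\le CX_m^{2(\mu+1)/3}$, I obtain
\[
\Bigl|\int_0^{X_m^{2/3}}\la x\ra^\mu e^{{\rm i}kx^\beta}h_m(x)\overline{h_n(x)}\,dx\Bigr|\le CX_m^{-1/2}X_n^{-1/2}\cdot X_m^{2(\mu+1)/3}=CX_m^{1/6+2\mu/3}X_n^{-1/2}.
\]
Using $X_m\sim X_n\sim\sqrt n$ (from $X_m\le X_n\le 2X_m$) this simplifies to $CX_m^{-1/3+2\mu/3}$, which for $\mu\ge 0$ and $X_m\ge 1$ is bounded by $CX_m^{-1/3+\mu}\sim C(mn)^{-(1/12-\mu/4)}$, matching the claimed bound. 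The only non-trivial input is the uniform boundedness of the rescaled Hankel factor (equivalently $f_m$) on the sub-turning-point region, and this is immediate from the integral representation of Section \ref{S4}; everything else is a routine size estimate in which the oscillation from $e^{{\rm i}kx^\beta}$ plays no role, reflecting the $k$-independence of the right-hand side of the lemma.
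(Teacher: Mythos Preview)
Your argument is correct and is precisely the ``straightforward computation'' the paper alludes to: on $[0,X_m^{2/3}]$ one ignores all oscillation and uses only the pointwise bound $|h_m(x)|\le C(X_m^2-x^2)^{-1/4}\le CX_m^{-1/2}$ (and the analogous bound for $h_n$) together with $\int_0^{X_m^{2/3}}\la x\ra^\mu dx\le CX_m^{\frac23(\mu+1)}$, yielding $CX_m^{-1/3+2\mu/3}\le CX_m^{-1/3+\mu}\le C(mn)^{-(\frac1{12}-\frac\mu4)}$. Your invocation of the Langer decomposition and the bound $|f_m|\le\Gamma(5/6)$ is exactly how the paper justifies the pointwise estimate on $|h_m|$ elsewhere in Section~\ref{S4}.
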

\indent Next we estimate the integral on $[X_m^\frac23,X_m-X_m^{\frac59}]$, for which we discuss different cases as the following.
\begin{lemma}\label{lemma2.10}
If $X_m\leq X_n\leq 2X_m$, when $k>0$ and $0\leq X_n^2-X_m^2\leq kX_m^\frac43$, then
$$ \displaystyle\left|\int_{X_m^\frac23}^{X_m-X_m^\frac59} \la x\ra^\mu e^{{\rm i}kx^2}h_m(x)\overline{h_n(x)}dx\right|\leq \frac{Ck^{-\frac12 }}{m^{\frac{7}{36}-\frac\mu4}n^{\frac{7}{36}-\frac\mu4}}, $$
where $m_0< m\leq n$.
\end{lemma}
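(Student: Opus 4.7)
\smallskip
\noindent\textbf{Proposal.} The plan is to apply a second-derivative oscillatory integral estimate (Lemma~\ref{oscillatory integral lemma}) to the phase $\phi(x):=\zeta_m(x)-\zeta_n(x)+kx^2$. After the standard decomposition $h_j=\psi_1^{(j)}+\psi_2^{(j)}$ for $j\in\{m,n\}$, and writing $\psi_1^{(m)}(x)\overline{\psi_1^{(n)}(x)}=Ce^{\rmi(\zeta_m(x)-\zeta_n(x))}\Psi(x)$, the principal piece of the integral takes the form
\begin{equation*}
C\int_{X_m^{2/3}}^{X_m-X_m^{5/9}}\la x\ra^\mu e^{\rmi\phi(x)}\Psi(x)\,dx.
\end{equation*}
The three cross terms involving $\psi_2$ gain an extra $O(X_m^{-2})$ factor and are controlled by direct estimates in the spirit of Lemmas~\ref{lemma2.6} and \ref{lemma2.65}.

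The core step is the uniform lower bound $|\phi''(x)|\ge k$ on $[X_m^{2/3},X_m-X_m^{5/9}]$. A direct computation gives
\begin{equation*}
\phi''(x)=2k-\frac{x\,(X_n^2-X_m^2)}{\sqrt{X_m^2-x^2}\sqrt{X_n^2-x^2}\bigl(\sqrt{X_m^2-x^2}+\sqrt{X_n^2-x^2}\bigr)}.
\end{equation*}
For $x\le X_m-X_m^{5/9}$ one has $X_m^2-x^2\ge 2X_mX_m^{5/9}-X_m^{10/9}\ge CX_m^{14/9}$, so $\sqrt{X_m^2-x^2}\ge CX_m^{7/9}$; since $X_n\ge X_m$ the same bound holds for $\sqrt{X_n^2-x^2}$. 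Together with $x\le X_m$ and the hypothesis $X_n^2-X_m^2\le kX_m^{4/3}$, the subtracted term is bounded by $k/2$, hence $\phi''(x)\ge 3k/2\ge k$. The specific exponent $\nu_2=5/9$ is dictated precisely by this algebraic balance: $X_m\cdot X_m^{4/3}=X_m^{7/3}$ against $(X_m^2-x^2)^{3/2}\ge CX_m^{7/3}$ in the denominator.

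Lemma~\ref{oscillatory integral lemma} then yields the $k^{-1/2}$ factor and reduces the problem to bounding $\|u\|_{L^\infty}+\|u'\|_{L^1}$ for $u:=\la x\ra^\mu\Psi$ on the interval. Near the right endpoint $(X_m^2-x^2)^{-1/4}\le CX_m^{-7/18}$ and likewise for $X_n$, so $\|u\|_{L^\infty}\le CX_m^{\mu-7/9}$. For the $L^1$ bound on the derivative I invoke Corollary~\ref{coro2.5} together with $J_2\le J_1$ and $J_4\le J_3$ (both consequences of $X_n\ge X_m$), and then bound $(X_n^2-x^2)^{-1/4}\le (X_m^2-x^2)^{-1/4}$ to obtain $\int J_1\,dx\le CX_m^{-7/9}$ and $\int J_3\,dx\le CX_m^{-19/9}$; the contribution from differentiating $\la x\ra^\mu$ is of lower order. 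Therefore $\|u\|_{L^\infty}+\|u'\|_{L^1}\le CX_m^{\mu-7/9}=Cm^{\mu/2-7/18}$. The hypothesis $X_m\le X_n\le 2X_m$ forces $m\sim n$, so this rearranges to $C(mn)^{\mu/4-7/36}$, which is the required form once multiplied by $k^{-1/2}$.

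The main obstacle is the second step: extracting a clean $|\phi''|\ge k$ bound requires threading the hypothesis $X_n^2-X_m^2\le kX_m^{4/3}$ through the denominator in $\phi''$, and the interval endpoint $\nu_2=5/9$ is fixed by this algebraic match. Everything afterward is routine amplitude bookkeeping modeled on Lemmas~\ref{lemma2.6} and \ref{lemma2.65}.
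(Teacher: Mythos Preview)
Your proposal matches the paper's proof essentially line by line. Both establish the second-derivative lower bound $|\phi''(x)|\ge\tfrac32 k$ on $[X_m^{2/3},X_m-X_m^{5/9}]$ by bounding the rational correction term by $k/2$ (using $X_n^2-X_m^2\le kX_m^{4/3}$ in the numerator and $X_m^2-x^2\ge X_m^{14/9}$ in the denominator, which is exactly the algebraic balance you identify for $\nu_2=5/9$), apply Lemma~\ref{oscillatory integral lemma} with exponent $2$ to extract the factor $k^{-1/2}$, and then control the amplitude via the same endpoint bound and the $J_1,J_3$ integrals from Corollary~\ref{coro2.5}; the three cross terms involving $\psi_2^{(\cdot)}$ are disposed of identically by the extra $O(X_m^{-2})$ gain.
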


\begin{proof}
We first estimate
$$\left|\displaystyle\int_{X_m^\frac23}^{X_m-X_m^\frac59} \la x\ra^\mu e^{{\rm i}kx^2}\psi^{(m)}_1(x)\overline{\psi^{(n)}_1(x)}dx\right| = \left|C\displaystyle\int_{X_m^\frac23}^{X_m-X_m^\frac59} \la x\ra^\mu e^{{\rm i}(\zeta_m-\zeta_n+kx^2)}\Psi(x)dx\right|.$$
Notice that
\begin{align*}
g'(x)&\leq\frac{kX_m^\frac43X_m}{\sqrt{2X_m^{\frac{14}{9}}-X_m^{\frac{10}{9}}}\sqrt{2X_m^{\frac{14}{9}}-X_m^{\frac{10}{9}}}(\sqrt{2X_m^{\frac{14}{9}}-X_m^{\frac{10}{9}}}+\sqrt{2X_m^{\frac{14}{9}}-X_m^{\frac{10}{9}}})}-2k\\
                      &\leq\frac{k X_m^\frac73}{2X_m^\frac{7}{3}}-2k= -\frac{3}{2}k,
\end{align*}
and by straightforward computation, $g''(x)\geq0$. It follows  $|g'(x)|\geq\frac{3}{2}k$ on $x\in[X_m^\frac23,X_m-X_m^\frac59]$. By Lemma \ref{oscillatory integral lemma},
\begin{align*}
&\left| \int_{X_m^\frac23}^{X_m-X_m^\frac59} \la x\ra^\mu e^{{\rm i}\frac{2}{3k}(\zeta_m-\zeta_n+kx^2)\cdot\frac{3k}{2}} (X_m^2-x^2)^{-\frac{1}{4}} (X_n^2-x^2)^{-\frac{1}{4}}\cdot f_m(x)\overline{f_n(x)}dx\right|\\
\leq& Ck^{-\frac12}\left[\left|(\la x\ra^\mu \Psi)(X_m-X_m^{\frac59})\right| + \int_{X_m^\frac23}^{X_m-X_m^\frac59} \left|(\la x\ra^\mu \Psi)^\prime(x)\right|dx\right]\\
\le&  Ck^{-\frac12}\left[ X_m^{\mu}\left|\Psi(X_m-X_m^{\frac59})\right| + \la X_m\ra^\mu\int_{X_m^\frac23}^{X_m-X_m^\frac59} \left|\Psi^\prime(x)\right|dx +\mu\int_{X_m^\frac23}^{X_m-X_m^\frac59} \la x\ra^{\mu-1}\left|\Psi(x)\right|dx \right].
\end{align*}
We estimate the above terms one by one.  Clearly,  
\begin{align*}
\left|\Psi(X_m-X_m^{\frac59})\right| &\leq C\left(X_m^2-(X_m-X_m^{\frac59})^2\right)^{-\frac{1}{4}} \left(X_n^2-(X_m-X_m^{\frac59})^2\right)^{-\frac{1}{4}} \leq C X_m^{-\frac79},
\end{align*}
and
$\int_{X_m^\frac23}^{X_m-X_m^{\frac59}} \la x\ra^{\mu-1}\left|\Psi(x)\right|dx\le CX_m^{-\frac79+\mu}$.
Besides, we have $|\Psi^\prime(x)|\le C(J_1+J_3)$ and
\begin{align*}
\int_{X_m^\frac23}^{X_m-X_m^\frac59}J_1dx &\leq  C \int_{X_m^\frac23}^{X_m-X_m^\frac59} x(X_m^2-x^2)^{-\frac{5}{4}}(X_n^2-x^2)^{-\frac{1}{4}}dx \leq C X_m^{-\frac79},
\end{align*}
also,
$\int_{X_m^\frac23}^{X_m-X_m^{\frac59}}J_3dx\leq C X_m^{-\frac79}$.
Combining with all the estimates, we obtain
$$\left|\displaystyle\int_{X_m^\frac23}^{X_m-X_m^\frac59} \la x\ra^\mu e^{{\rm i}kx^{2}}\psi^{(m)}_1(x)\overline{\psi^{(n)}_1(x)}dx\right| \leq Ck^{-\frac12} X_m^{-\frac79+\mu}.$$
The estimates for the rest three terms are easier. In fact, by $m > m_0$,
\begin{align*}
&\left|\int_{X_m^\frac23}^{X_m-X_m^\frac59} \la x\ra^\mu e^{{\rm i}kx^2}\psi^{(m)}_2(x)\overline{\psi^{(n)}_1(x)}dx\right| \leq CX_m^{-2+\mu}\int_{X_m^\frac23}^{X_m-X_m^\frac59}(X_m^2-x^2)^{-\frac{1}{2}}dx\\
&\leq CX_m^{-\frac52+\mu} \int_{0}^{X_m-X_m^\frac59}(X_m-x)^{-\frac{1}{2}}dx \leq CX_m^{-\frac79+\mu}.
\end{align*}
The estimates for the other two are similar. Thus, 
\begin{align*}
\left|\displaystyle\int_{X_m^\frac23}^{X_m-X_m^\frac59}\la x\ra^\mu e^{{\rm i}kx^2}h_m(x)\overline{h_n(x)}dx\right|
\leq& \frac{Ck^{-\frac12 }}{m^{\frac{7}{36}-\frac\mu4}n^{\frac{7}{36}-\frac\mu4}}.
\end{align*}
\end{proof}

\begin{lemma}\label{lemma2.11}
For $k>0$, $X_m\leq X_n\leq 2X_m$, if $kX_m^\frac43\leq X_n^2-X_m^2$, then
$$\displaystyle\left|\int_{X_m^\frac23}^{X_m-X_m^\frac59} \la x\ra^\mu e^{{\rm i}kx^2}h_m(x)\overline{h_n(x)}dx\right|\leq \frac{Ck^{-\frac13}}{m^{\frac{1}{18}-\frac\mu4} n^{\frac{1}{18}-\frac\mu4}},$$
where $m_0< m\leq n$.
\end{lemma}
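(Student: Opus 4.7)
The plan is to follow the same pattern as Lemma~\ref{lemma2.10}, but to replace the second-derivative oscillatory integral estimate by a third-derivative one, since in the current regime the bound $|\phi''(x)|\ge \frac{3}{2}k$ that drove Lemma~\ref{lemma2.10} breaks down. Writing $\phi(x)=\zeta_m(x)-\zeta_n(x)+kx^2$, so that $\phi''(x)=2k-A(x)$ with
\[
A(x):=x\Bigl(\frac{1}{\sqrt{X_m^2-x^2}}-\frac{1}{\sqrt{X_n^2-x^2}}\Bigr),
\]
the function $A$ is strictly increasing, and in the regime $kX_m^{4/3}\le X_n^2-X_m^2$ it can cross $2k$ somewhere in $[X_m^{2/3},X_m-X_m^{5/9}]$, so that $\phi''$ may vanish there and the mechanism used in Lemma~\ref{lemma2.10} is no longer available.

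As before, decompose $h_m\overline{h_n}=\psi_1^{(m)}\overline{\psi_1^{(n)}}+(\text{three cross terms})$ and treat the three cross terms by the same crude estimates as in Lemma~\ref{lemma2.10}; they contribute at most the target right-hand side. For the main integral $\int_{X_m^{2/3}}^{X_m-X_m^{5/9}}\la x\ra^\mu e^{\rmi\phi(x)}\Psi(x)\,dx$ the idea is to differentiate once more and use $\phi'''(x)=-A'(x)$ together with the lower bound
\[
A'(x)\ge \frac{1}{\sqrt{X_m^2-x^2}}-\frac{1}{\sqrt{X_n^2-x^2}}=\frac{X_n^2-X_m^2}{\bigl(\sqrt{X_n^2-x^2}+\sqrt{X_m^2-x^2}\bigr)\sqrt{(X_m^2-x^2)(X_n^2-x^2)}}.
\]
Using $X_n\le 2X_m$ and the hypothesis $X_n^2-X_m^2\ge kX_m^{4/3}$, this should yield the uniform lower bound $|\phi'''(x)|\ge c\,kX_m^{-5/3}$ on the whole interval.

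With this in hand, the third-derivative form of Lemma~\ref{oscillatory integral lemma} (which converts $|\phi^{(j)}|\ge\lambda$ into a factor $\lambda^{-1/j}$) bounds the main integral by
\[
C(kX_m^{-5/3})^{-1/3}\Bigl(\|\la x\ra^\mu\Psi\|_{L^\infty}+\int_{X_m^{2/3}}^{X_m-X_m^{5/9}}\bigl|(\la x\ra^\mu\Psi)'(x)\bigr|\,dx\Bigr)=Ck^{-1/3}X_m^{5/9}\,(\cdots).
\]
The amplitude piece is estimated exactly as in Lemma~\ref{lemma2.10}: Corollary~\ref{coro2.5} gives $|\Psi'|\le C(J_1+J_3)$ on the relevant interval, and combining with the endpoint bound on $\Psi$ and the factor $\mu\la x\ra^{\mu-1}|\Psi|$ produces a quantity of order $X_m^{-7/9}(X_mX_n)^{\mu/2}$ after applying $X_n\sim X_m$. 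Assembling, the main integral is bounded by $Ck^{-1/3}X_m^{-2/9+\mu}\le Ck^{-1/3}(mn)^{-1/18+\mu/4}$, which matches the claim.

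The main obstacle I anticipate is the final bookkeeping of the amplitude estimate. The factor $(X_n^2-x^2)^{-1/4}$ can be as small as $(kX_m^{4/3})^{-1/4}$ on the subregime $k\gg X_m^{2/9}$ (where $kX_m^{4/3}\gg X_m^{14/9}$), gaining an extra $k^{-1/4}$; one must check that this extra gain never disrupts the target exponents $-1/18$ and $-\mu/4$ produced by combining the van der Corput factor $X_m^{5/9}$ with the amplitude estimate. Splitting into the two subcases $k\lesssim X_m^{2/9}$ and $k\gtrsim X_m^{2/9}$ and comparing the resulting bounds with the target $Ck^{-1/3}(mn)^{-1/18+\mu/4}$ should close the argument.
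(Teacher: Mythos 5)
Your proposal is correct and follows essentially the same route as the paper: both exploit the hypothesis $kX_m^{4/3}\le X_n^2-X_m^2$ to get the uniform lower bound $|\phi'''(x)|\ge \tfrac{k}{6}X_m^{-5/3}$ on the third derivative of the phase (the paper phrases this as $g'''\ge 0$ hence $g''(x)\ge g''(0)=\tfrac1{X_m}-\tfrac1{X_n}$, you bound $A'(x)$ pointwise, but it is the same computation), then apply the $j=3$ van der Corput estimate to gain $k^{-1/3}X_m^{5/9}$ and reuse the Lemma~\ref{lemma2.10} amplitude bounds to get $X_m^{-7/9+\mu}$, yielding $Ck^{-1/3}X_m^{-2/9+\mu}$. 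Your closing worry is moot: the factor $(X_n^2-x^2)^{-1/4}$ being smaller than assumed only improves the upper bound, so no case split on $k$ versus $X_m^{2/9}$ is needed.
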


\begin{proof}
For $kX_m^\frac43\leq X_n^2-X_m^2$, straightforward computation shows that $g'''(x)\geq0$. So
\begin{align*}
g''(x)\geq g''(0)=\frac{1}{X_m}-\frac{1}{X_n}=\frac{X_n^2-X_m^2}{X_mX_n(X_m+X_n)}\geq\frac{kX_m^\frac43}{3X_mX_mX_n}\geq\frac{k}{6}X_m^{-\frac53}.
\end{align*}

Then by Lemma \ref{oscillatory integral lemma},
\begin{eqnarray*}
&&\Big|\int_{X_m^\frac{2}{3}}^{X_m-X_m^{\frac59}}\la x\ra^\mu e^{{\rm i}\frac{\zeta_m-\zeta_n+kx^2}{kX_m^{-\frac53}} kX_m^{-\frac53}}(X_n^2-x^2)^{-\frac{1}{4}}(X_m^2-x^2)^{-\frac{1}{4}}f_m(x)\overline{f_n(x)}dx\Big| \nonumber \\
&\leq&Ck^{-\frac{1}{3}}X_m^{\frac59}\left[\left|(\la x\ra^\mu\Psi)(X_m-X_m^{\frac59})\right| + \int_{X_m^\frac{2}{3}}^{X_m-X_m^{\frac59}} \left|(\la x\ra^\mu\Psi)^\prime(x)\right|dx\right] \nonumber \\
&\leq&Ck^{-\frac{1}{3}}X_m^{\frac59}\bigg[X_m^\mu\left|\Psi(X_m-X_m^{\frac59})\right| +  X_m^\mu\int_{X_m^\frac{2}{3}}^{X_m-X_m^{\frac59}}\left|\Psi^\prime(x)\right|dx \nonumber \\
&&
+\mu\int_{X_m^\frac{2}{3}}^{X_m-X_m^{\frac59}} \la x\ra^{\mu-1}\left|\Psi(x)\right|dx\bigg].
\end{eqnarray*}
The rest part of the proof is similar with Lemma \ref{lemma2.10}.
\end{proof}

\begin{lemma}\label{lemma2.16}
For $\forall k <  0, X_m\leq X_n\leq 2X_m$, we have
$$\displaystyle\left|\int_{X_m^\frac23}^{X_m-X_m^{\nu_2}} \la x\ra^\mu e^{{\rm i}kx^\beta}h_m(x)\overline{h_n(x)}dx\right|\leq \frac{C(|k|^{-1}\vee 1)}{m^{\frac{1}{6}\beta-\frac{1}{24}+\frac{\nu_2}{8}-\frac\mu4 } n^{\frac{1}{6}\beta-\frac{1}{24}+\frac{\nu_2}{8}-\frac\mu4}},$$
where $m_0<m\leq n$. 
\end{lemma}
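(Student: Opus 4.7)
The plan is to adapt the strategy of Lemma \ref{lemma2.10} and Lemma \ref{lemma2.11} to the case $k<0$, where the sign of $k$ makes the phase analysis markedly simpler than in those lemmas. Following the same decomposition $h_m\overline{h_n}=\sum_{i,j=1,2}\psi_i^{(m)}\overline{\psi_j^{(n)}}$, I would focus on the main term $\psi_1^{(m)}\overline{\psi_1^{(n)}}=Ce^{{\rm i}(\zeta_m(x)-\zeta_n(x))}\Psi(x)$, so the main integral becomes
\[
I=C\int_{X_m^{2/3}}^{X_m-X_m^{\nu_2}}\la x\ra^\mu e^{{\rm i}(\zeta_m-\zeta_n+kx^\beta)}\Psi(x)\,dx.
\]
The three remaining cross products involve $\psi_2^{(\cdot)}$, for which $|\psi_2^{(m)}|\le CX_m^{-2}|\psi_1^{(m)}|$, so they contribute strictly smaller bounds handled exactly as the corresponding pieces in Lemma \ref{lemma2.10}.

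The key observation is that for $k<0$ the phase function has a very tame derivative. Setting $g(x)=\sqrt{X_n^2-x^2}-\sqrt{X_m^2-x^2}-k\beta x^{\beta-1}$, so that the phase derivative equals $-g$, both summands are non-negative when $k<0$ and $X_m\le X_n$, which gives the uniform lower bound
\[
g(x)\ge |k|\beta x^{\beta-1}\ge |k|\beta X_m^{2(\beta-1)/3},\qquad x\in [X_m^{2/3},X_m-X_m^{\nu_2}].
\]
A short differentiation shows $g'(x)\ge 0$ under the same sign assumptions (both $x[(X_m^2-x^2)^{-1/2}-(X_n^2-x^2)^{-1/2}]$ and $|k|\beta(\beta-1)x^{\beta-2}$ are $\ge 0$), so $\phi'=-g$ is monotone. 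Applying the first derivative form of Lemma \ref{oscillatory integral lemma} then yields
\[
|I|\le \frac{C}{|k|\beta X_m^{2(\beta-1)/3}}\left[\la X_m\ra^\mu|\Psi|_{\mathrm{bd}}+\int_{X_m^{2/3}}^{X_m-X_m^{\nu_2}}\left|\left(\la x\ra^\mu\Psi(x)\right)'\right|dx\right],
\]
where $|\Psi|_{\mathrm{bd}}$ denotes the sum of $|\Psi|$ at the two endpoints.

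Using Corollary \ref{coro2.5} together with computations mirroring those in Lemma \ref{lemma2.10}, the dominant contributions are $|\Psi(X_m-X_m^{\nu_2})|\le CX_m^{-(1+\nu_2)/2}$ and $\int J_1\,dx\le CX_m^{-(1+\nu_2)/2}$, while $\int J_3\,dx$ and the $\mu\la x\ra^{\mu-1}|\Psi|$ contribution are strictly lower order. Hence $|I|\le C|k|^{-1}X_m^{\mu-(1+\nu_2)/2-2(\beta-1)/3}$, and a direct check shows that this exponent equals $\mu-1/3-\beta/2$ for $1<\beta<2,\nu_2=1-\beta/3$ and equals $\mu-13/9$ for $\beta=2,\nu_2=5/9$. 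Converting via $X_m\sim m^{1/2}\sim n^{1/2}$ (which is legitimate because $X_m\le X_n\le 2X_m$ forces $m\sim n$) reproduces the claimed exponent $\beta/6-1/24+\nu_2/8-\mu/4$ on both $m$ and $n$, and since $|k|^{-1}\le |k|^{-1}\vee 1$ the full statement follows. The main obstacle will be the careful bookkeeping of the boundary and variation terms of $\Psi$ at $x=X_m-X_m^{\nu_2}$, where the singularities of $(X_m^2-x^2)^{-1/4}$ and $(X_n^2-x^2)^{-1/4}$ must be controlled simultaneously and uniformly across the range $X_m\le X_n\le 2X_m$; the phase analysis itself is in fact easier than in Lemma \ref{lemma2.10} or Lemma \ref{lemma2.11}, since the $k<0$ sign removes any need to split cases based on the relative sizes of $|k|$ and $X_n^2-X_m^2$.
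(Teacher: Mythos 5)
Your proposal is correct and follows essentially the same route as the paper: the same decomposition into $\psi_i^{(m)}\overline{\psi_j^{(n)}}$, the same sign observation that for $k<0$ one has $g(x)\ge |k|\beta X_m^{\frac23(\beta-1)}$ with $g'\ge 0$, the same first-derivative van der Corput estimate, and the same endpoint/variation bookkeeping (which the paper merely delegates to ``similar with Lemma \ref{lemma2.10}'' and which you carry out explicitly, with matching exponents).
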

\begin{proof}
We first estimate
$$\left|\int_{X_m^\frac{2}{3}}^{X_m-X_m^{\nu_2}}
\la x\ra^\mu e^{{\rm i} kx^\beta}\psi_1^{(m)}(x)\overline{\psi_1^{(n)}(x)}dx\right| =\left|C\int_{X_m^\frac{2}{3}}^{X_m-X_m^{\nu_2}}\la x\ra^\mu e^{{\rm i}(\zeta_m-\zeta_n+kx^\beta)}\Psi(x)dx\right|.$$
Notice that
$g(x)=\sqrt{X_n^2-x^2}-\sqrt{X_m^2-x^2}-\beta kx^{\beta-1} \geq - kX_m^{\frac{2}{3}(\beta-1)} $
and $g^\prime(x)>0$,  then by Lemma \ref{oscillatory integral lemma},
\begin{align*}
&\left|\int_{X_m^\frac{2}{3}}^{X_m-X_m^{\nu_2}}\la x\ra^\mu e^{{\rm i}\frac{\zeta_m-\zeta_n+kx^\beta}{|k|X_m^{\frac{2}{3}(\beta-1)}}|k|X_m^{\frac{2}{3}(\beta-1)}} (X_m^2-x^2)^{-\frac{1}{4}}(X_n^2-x^2)^{-\frac{1}{4}}f_m(x)\overline{f_n(x)}dx\right|\\
\leq& \frac{C}{|k|}X_m^{-\frac{2}{3}(\beta-1)}\left[\left|(\la x\ra^\mu \Psi)(X_m-X_m^{\nu_2})\right| +\int_{X_m^\frac{2}{3}}^{X_m-X_m^{\nu_2}}|(\la x\ra^\mu \Psi)^\prime(x)|dx\right]\\
\le& C|k|^{-1}X_m^{-\frac{2}{3}(\beta-1)}\bigg[X_m^\mu \left|\Psi(X_m-X_m^{\nu_2})\right| +X_m^\mu\int_{X_m^\frac23}^{X_m-X_m^{\nu_2}}|\Psi^\prime(x)|dx \\
&+\mu\int_{X_m^\frac23}^{X_m-X_m^{\nu_2}}\la x\ra^{\mu-1}|\Psi(x)|dx\bigg].
\end{align*}
The rest part of the proof is similar with Lemma \ref{lemma2.10}.
\end{proof}

Finally, we have
\begin{lemma}\label{lemma2.14} 
For $k>0, 1<\beta<2, X_m\leq X_n\leq 2X_m$, we have
$$ \displaystyle\left|\int_{X_m^\frac23}^{X_m-X_m^{\nu_2}} \la x\ra^\mu e^{{\rm i}kx^\beta}h_m(x)\overline{h_n(x)}dx\right|\leq \frac{C|\beta(\beta-1)(\beta-2)k|^{-\frac{1}{3}} }{m^{\frac{\beta}{24}-\frac\mu4} n^{\frac{\beta}{24}-\frac\mu4}}, $$
where $m_0<m\leq n$.
\end{lemma}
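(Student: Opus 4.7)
My plan is to parallel the strategy of Lemmas \ref{lemma2.10}--\ref{lemma2.16}. I would first write $h_m=\psi_1^{(m)}+\psi_2^{(m)}$ and $h_n=\psi_1^{(n)}+\psi_2^{(n)}$, so the integral decomposes into four pieces. The three pieces containing at least one $\psi_2^{(j)}$ will be lower order (each costs an extra $m^{-1}$ or $n^{-1}$) and controlled exactly as in Lemma \ref{lemma2.6}. So the real task is to bound the principal oscillatory integral
\[
I_1:=\int_{X_m^{2/3}}^{X_m-X_m^{\nu_2}}\langle x\rangle^\mu e^{{\rm i}\phi(x)}\Psi(x)\,dx,\qquad \phi(x):=\zeta_m(x)-\zeta_n(x)+kx^\beta,
\]
with $\Psi$ as in Corollary \ref{coro2.5}.

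The new difficulty, relative to the $k<0$ situation in Lemma \ref{lemma2.16}, is that for $k>0$ the terms in $\phi'(x)=\sqrt{X_m^2-x^2}-\sqrt{X_n^2-x^2}+k\beta x^{\beta-1}$ (and similarly in $\phi''$) may cancel, so no uniform first- or second-derivative bound on $\phi$ survives on the whole interval. I would therefore pass to the third derivative. A direct computation gives
\[
\phi'''(x)=-X_m^2(X_m^2-x^2)^{-3/2}+X_n^2(X_n^2-x^2)^{-3/2}+k\beta(\beta-1)(\beta-2)x^{\beta-3}.
\]
Two sign observations now do the work. First, $X\mapsto X^2(X^2-x^2)^{-3/2}$ is decreasing in $X>x$ (check $d/dX$), so for $X_m\le X_n$ the first two terms combine to a non-positive quantity. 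Second, $\beta(\beta-1)(\beta-2)<0$ for $1<\beta<2$ and $k>0$, so the last term is also non-positive. Hence all three contributions carry the same sign, and uniformly on $[X_m^{2/3},X_m-X_m^{\nu_2}]$
\[
|\phi'''(x)|\ge |k\beta(\beta-1)(\beta-2)|\,x^{\beta-3}\ge A:=|k\beta(\beta-1)(\beta-2)|\,X_m^{\beta-3}.
\]

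Next I would apply the third-derivative form of Lemma \ref{oscillatory integral lemma} with this $A$, obtaining a prefactor $A^{-1/3}=|k\beta(\beta-1)(\beta-2)|^{-1/3}\,X_m^{\nu_2}$ (using $\nu_2=1-\beta/3$) multiplied by the boundary value plus the total variation of $\langle x\rangle^\mu\Psi$. Bounding $|\Psi'|\le C(J_1+J_3)$ via Corollary \ref{coro2.5}, evaluating at the right endpoint $X_m-X_m^{\nu_2}$, where $X_m-x=X_m^{\nu_2}$ and $X_n-x\ge X_m^{\nu_2}$, and integrating $J_1$ and $J_3$ explicitly on $[X_m^{2/3},X_m-X_m^{\nu_2}]$, one finds $|\langle x\rangle^\mu\Psi(X_m-X_m^{\nu_2})|\lesssim X_m^{\mu-1+\beta/6}$ together with the same bound for the total variation. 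Multiplying by $A^{-1/3}$ yields $|k\beta(\beta-1)(\beta-2)|^{-1/3}\,X_m^{\mu-\beta/6}$, which, since $X_m\asymp\sqrt m\le\sqrt n\asymp X_n$, is dominated by $C|k\beta(\beta-1)(\beta-2)|^{-1/3}(mn)^{\mu/4-\beta/24}$, matching the claimed bound.

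The main obstacle will be the bookkeeping in this last step: verifying that the cancellation $\nu_2+(\beta/6-1)=-\beta/6$ produces exactly the target exponent $\beta/24$, and that each of the boundary contribution, $\int J_1\,dx$, and $\int J_3\,dx$ delivers the same rate $X_m^{\mu-1+\beta/6}$. The choices $\nu_2=1-\beta/3$ and left cutoff $X_m^{2/3}$ are tuned precisely for this alignment; once the alignment is confirmed, the remaining three mixed integrals are handled exactly as in Lemma \ref{lemma2.6}.
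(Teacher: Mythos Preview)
Your proposal is correct and follows essentially the same route as the paper: both arguments exploit the third-derivative lower bound $|\phi'''(x)|\ge |k\beta(\beta-1)(\beta-2)|X_m^{\beta-3}$ (in the paper's notation this appears as $g''(x)\ge -\beta(\beta-1)(\beta-2)kX_m^{\beta-3}$, since $g=-\phi'$), apply Lemma~\ref{oscillatory integral lemma} with the resulting prefactor $|k\beta(\beta-1)(\beta-2)|^{-1/3}X_m^{1-\beta/3}=|k\beta(\beta-1)(\beta-2)|^{-1/3}X_m^{\nu_2}$, and then bound the boundary term and the $J_1,J_3$ integrals exactly as in Lemma~\ref{lemma2.10} to obtain $X_m^{\mu-\beta/6}\asymp (mn)^{\mu/4-\beta/24}$. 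The remaining three $\psi_2^{(j)}$ pieces are handled in the paper precisely as you indicate.
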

\begin{proof}
First we estimate
$$\left|\displaystyle\int_{X_m^\frac23}^{X_m-X_m^{\nu_2}} \la x\ra^\mu e^{{\rm i}kx^\beta}\psi^{(m)}_1(x)\overline{\psi^{(n)}_1(x)}dx\right| = \left|C\displaystyle\int_{X_m^\frac23}^{X_m-X_m^{\nu_2}} \la x\ra^\mu e^{{\rm i}(\zeta_m-\zeta_n+kx^\beta)}\Psi(x)dx\right|.$$
Since
\begin{align*}
g''(x)\geq-\beta(\beta-1)(\beta-2)kx^{\beta-3}\geq-\beta(\beta-1)(\beta-2)kX_m^{\beta-3},
\end{align*}
then by Lemma \ref{oscillatory integral lemma},
\begin{align*}
&\left| \int_{X_m^\frac23}^{X_m-X_m^{\nu_2}} \la x\ra^\mu e^{{\rm i}(\zeta_m-\zeta_n+kx^\beta)} (X_m^2-x^2)^{-\frac{1}{4}} (X_n^2-x^2)^{-\frac{1}{4}}\cdot f_m(x)\overline{f_n(x)}dx\right|\\
\leq& C|\beta(\beta-1)(\beta-2)k|^{-\frac{1}{3}}X_m^{1-\frac{\beta}{3}}\left[\left|(\la x\ra^\mu \Psi)(X_m-X_m^{\nu_2})\right| + \int_{X_m^\frac23}^{X_m-X_m^\nu} \left|(\la x\ra^\mu \Psi)^\prime(x)\right|dx\right]\\
\leq&C|\beta(\beta-1)(\beta-2)k|^{-\frac{1}{3}}X_m^{1-\frac{\beta}{3}}\bigg[X_m^\mu\left|\Psi(X_m-X_m^{\nu_2})\right| + X_m^\mu\int_{X_m^\frac23}^{X_m-X_m^{\nu}}\left|\Psi^\prime(x)\right|dx\\
&+ \mu\int_{X_m^\frac23}^{X_m-X_m^{\nu_2}} \la x\ra^{\mu-1}\left|\Psi(x)\right|dx\bigg].
\end{align*}
The rest part of the proof is similar with Lemma \ref{lemma2.10}.
\end{proof}

For the last part of the integral, we have
\begin{lemma}\label{lemma2.18}
For $\forall k \ne 0, X_m\leq X_n\leq 2X_m$, $1<\beta\leq 2$, 
$$\displaystyle\left|\int_{X_m-X_m^{\nu_2}}^{X_m}\la x\ra^\mu e^{{\rm i}kx^{\beta}}h_m(x)\overline{h_n(x)}dx\right|\leq \frac{C}{m^{\frac{1}{8}-\frac{\nu_2}{8}-\frac\mu4} n^{\frac{1}{8}-\frac{\nu_2}{8}-\frac\mu4}}.$$
Here $m_0< m\leq n$.
\end{lemma}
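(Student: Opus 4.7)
The plan is to treat the interval $[X_m - X_m^{\nu_2},X_m]$ by a direct \textbf{trivial bound} on the integrand, in the same spirit as Lemma~\ref{lemma2.7}. Because the interval is short and we are close to the turning point $X_m$, it will be unnecessary to invoke any oscillatory integral estimate; the length $X_m^{\nu_2}$ is already small enough.

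First, I would decompose $h_m\overline{h_n}$ into the four cross products $\psi_{j_1}^{(m)}\overline{\psi_{j_2}^{(n)}}$ with $j_1,j_2\in\{1,2\}$. The main term is $\psi_1^{(m)}\overline{\psi_1^{(n)}}$; since $|\psi_2^{(m)}|\le C X_m^{-2}|\psi_1^{(m)}|$ and analogously for $n$, the remaining three terms will contribute at most a factor $X_m^{-2}$ times the main term and are therefore negligible. For the main term, use the uniform bounds
\[
|\psi_1^{(m)}(x)|\le C(X_m^2-x^2)^{-\frac14},\qquad |\psi_1^{(n)}(x)|\le C(X_n^2-x^2)^{-\frac14},
\]
valid because $z\mapsto \sqrt{z/2}\,H_{1/3}^{(1)}(z)$ is bounded on $(0,\infty)$ (it vanishes like $z^{1/6}$ at $0$ and oscillates with amplitude $O(1)$ at infinity).

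The key geometric remark is that on $[X_m-X_m^{\nu_2},X_m]$, under the hypothesis $X_m\le X_n\le 2X_m$, one has $X_m+x\ge X_m$, and $X_n-x\ge X_m-x$ together with $X_n+x\ge X_m$, so that
\[
X_m^2-x^2\ge X_m(X_m-x),\qquad X_n^2-x^2\ge X_m(X_m-x).
\]
Combining these with $\langle x\rangle^\mu\le C X_m^\mu$, one finds
\[
\bigl|\langle x\rangle^\mu\psi_1^{(m)}(x)\overline{\psi_1^{(n)}(x)}\bigr|\le C X_m^{\mu-\frac12}(X_m-x)^{-\frac12},
\]
and the substitution $u=X_m-x$ yields
\[
\int_{X_m-X_m^{\nu_2}}^{X_m}\!\!\langle x\rangle^\mu\bigl|\psi_1^{(m)}\overline{\psi_1^{(n)}}\bigr|\,dx\le C X_m^{\mu-\frac12}\int_0^{X_m^{\nu_2}}u^{-\frac12}du\le C X_m^{\mu-\frac12+\frac{\nu_2}{2}}.
\]

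Finally, using $X_m\sim X_n$ (from $X_m\le X_n\le 2X_m$), one may symmetrise the estimate as $C X_m^{\frac\mu2-\frac14+\frac{\nu_2}{4}}X_n^{\frac\mu2-\frac14+\frac{\nu_2}{4}}$, and invoking $X_m^2=2m-1$, $X_n^2=2n-1$ converts this into
\[
\frac{C}{m^{\frac18-\frac{\nu_2}{8}-\frac{\mu}{4}}n^{\frac18-\frac{\nu_2}{8}-\frac{\mu}{4}}}.
\]
The three error cross terms give bounds smaller by a factor $X_m^{-2}$ or $X_n^{-2}$, which only improves the estimate, and the proof is complete. I do not foresee a real obstacle here: the only subtlety is the asymmetric exploitation of $X_n\ge X_m$ to replace $(X_n^2-x^2)^{-1/4}$ by $C X_m^{-1/4}(X_m-x)^{-1/4}$ in a way that does not waste any power, which is exactly what the choice of the exponent $\nu_2$ in the statement has been calibrated to absorb.
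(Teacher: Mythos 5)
Your proposal is correct and follows essentially the same route as the paper: a trivial (non-oscillatory) bound $|\langle x\rangle^\mu\psi_1^{(m)}\overline{\psi_1^{(n)}}|\le CX_m^{\mu-\frac12}(X_m-x)^{-\frac12}$ on the short interval, integration of the $-\tfrac12$ power to get $CX_m^{\mu-\frac12+\frac{\nu_2}{2}}$, symmetrisation via $X_m\sim X_n$, and dismissal of the three cross terms as higher order. The paper reaches the same intermediate bound by the marginally cruder chain $(X_m^2-x^2)^{-\frac14}(X_n^2-x^2)^{-\frac14}\le(X_m^2-x^2)^{-\frac12}\le X_m^{-\frac12}(X_m-x)^{-\frac12}$, which is equivalent to your two-factor estimate.
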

\begin{proof}
First,
\begin{align*}
\left|\int_{X_m-X_m^{\nu_2}}^{X_m} \la x\ra^\mu e^{{\rm i}kx^\beta}\psi_1^{(m)}(x)\overline{\psi_1^{(n)}(x)}dx\right| &\leq C\int_{X_m-X_m^{\nu_2}}^{X_m}\la x\ra^\mu(X_m^2-x^2)^{-\frac{1}{4}}(X_n^2-x^2)^{-\frac{1}{4}}dx\\
&\leq CX_m^\mu\int_{X_m-X_m^{\nu_2}}^{X_m}(X_m^2-x^2)^{-\frac{1}{2}}dx\\
&\leq CX_m^{-\frac{1}{2}+\mu}\int_{X_m-X_m^{\nu_2}}^{X_m}(X_m-x)^{-\frac{1}{2}}dx\\
&\leq CX_m^{-\frac{1}{2}+\mu}X_m^{\frac{\nu_2}{2}}\leq \frac{C}{m^{\frac{1}{8}-\frac\mu4-\frac{\nu_2}{8}}n^{\frac{1}{8}-\frac\mu4-\frac{\nu_2}{8}}}.
\end{align*}
It follows 
$\displaystyle\left|\int_{X_m-X_m^{\nu_2}}^{X_m} \la x\ra^\mu e^{{\rm i}kx^\beta}h_m(x)\overline{h_n(x)}dx\right|\leq \frac{C}{m^{\frac{1}{8}-\frac\mu4-\frac{\nu_2}{8}}n^{\frac{1}{8}-\frac\mu4-\frac{\nu_2}{8}}}$.
\end{proof}
\begin{lemma}
For $k\neq 0,\ X_m\leq X_n\leq 2X_m$ and $1<\beta\leq 2$, we have
$$\Big|\int_{X_m}^{X_n} \la x\ra^\mu e^{{\rm i}kx^\beta}h_m(x)\overline{h_n(x)}dx\Big|\leq \frac{C}{m^{-\frac{\nu_2}{16}+\frac{1}{8}-\frac\mu4}
n^{-\frac{\nu_2}{16}+\frac{1}{8}-\frac\mu4}}.$$
\end{lemma}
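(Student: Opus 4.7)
Following the template of Lemmas \ref{lemma2.8} and \ref{lemma2.18}, I would write $h_m = \psi_1^{(m)} + \psi_2^{(m)}$ and $h_n = \psi_1^{(n)} + \psi_2^{(n)}$ and focus on the principal term
\[
I := \int_{X_m}^{X_n} \la x\ra^\mu e^{\rmi kx^\beta} \psi_1^{(m)}(x) \overline{\psi_1^{(n)}(x)}\,dx,
\]
since the three remaining cross-terms are smaller by an $O(\lambda^{-1})$ correction and are handled by exactly the same method. Throughout I use $X_m \asymp X_n$, which follows from $X_m \le X_n \le 2X_m$.

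The strategy rests on two pointwise bounds on $[X_m, X_n]$. Since $x > X_m$ is past the turning point of $h_m$, Lemmas~5.4 and~5.5 of \cite{LiangLuo2021} give
\[
|\psi_1^{(m)}(x)| \le C(x^2 - X_m^2)^{-1/4} e^{-|\zeta_m(x)|}, \qquad |\zeta_m(x)| \ge \tfrac{2\sqrt{2}}{3} X_m^{1/2}(x-X_m)^{3/2}.
\]
For $\psi_1^{(n)}$ I claim the uniform bound $|\psi_1^{(n)}(x)| \le CX_n^{-1/6}$ on $[X_m, X_n]$: on $[X_m, X_n - X_n^{-1/3}]$ the WKB envelope $|\psi_1^{(n)}(x)| \le C(X_n^2 - x^2)^{-1/4}$ is dominated by $C(X_n\cdot X_n^{-1/3})^{-1/4} = CX_n^{-1/6}$, while on the Airy window $[X_n - X_n^{-1/3}, X_n]$ the near-zero asymptotics of $\sqrt{\pi\zeta_n/2}\,H^{(1)}_{1/3}(\zeta_n)$ produce exactly the Airy-scale value $X_n^{-1/6}$ (the standard argument underlying Lemma \ref{lemma2.8}). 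Combining,
\[
|I| \le C X_n^{\mu - 1/6}\int_{X_m}^{X_n} (x^2 - X_m^2)^{-1/4} e^{-|\zeta_m(x)|}\,dx.
\]

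I would estimate the single-function integral on the right by splitting at $X_m + X_m^{-1/3}$, or keeping a single piece when $X_n - X_m \le X_m^{-1/3}$. On the near-turning slab the exponential is $O(1)$ and $(x^2 - X_m^2)^{-1/4} \le CX_m^{-1/4}(x - X_m)^{-1/4}$ integrates to $CX_m^{-1/2}$. On the decay slab, the change of variables $y = X_m^{1/3}(x - X_m)$ turns $|\zeta_m|$ into a quantity $\asymp y^{3/2}$ and the integrand into $\asymp X_m^{-1/2} y^{-1/4} e^{-c y^{3/2}}$, whose integral from $y = 1$ to $+\infty$ is again $O(X_m^{-1/2})$. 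Therefore
\[
|I| \le C X_n^{\mu - 1/6} X_m^{-1/2} \asymp C(mn)^{\mu/4 - 1/6},
\]
which is strictly stronger than the target, because $-1/6 \le -1/8 + \nu_2/16$ holds for every $\nu_2 \ge 0$.

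The main obstacle I anticipate is justifying the uniform pointwise bound $|\psi_1^{(n)}(x)| \le CX_n^{-1/6}$ on $[X_m, X_n]$: one must match the WKB envelope to the near-origin asymptotics of $\sqrt{\pi\zeta_n/2}\,H^{(1)}_{1/3}(\zeta_n)$ at $x = X_n - X_n^{-1/3}$ via the Hankel expansion used throughout Section \ref{S4}, and must also check that when $X_n - X_m$ is so small that $X_m$ itself lies in the Airy window of $h_n$ the same bound persists. Once this pointwise estimate is in place, the remainder is a routine change-of-variables calculation mirroring the tail estimates of Lemma \ref{lemma2.8}, and the edge case $X_n - X_m \le X_m^{-1/3}$ causes no additional trouble since both endpoint singularities of the integrand remain integrable.
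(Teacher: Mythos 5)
Your argument is correct in outline and takes a genuinely different route from the paper. The paper keeps both WKB envelopes $(x^2-X_m^2)^{-\frac14}(X_n^2-x^2)^{-\frac14}$ and splits $[X_m,X_n]$ into the three windows $[X_m,X_m+X_m^{-\nu_2}]$, $[X_m+X_m^{-\nu_2},X_n-X_n^{-\nu_2}]$, $[X_n-X_n^{-\nu_2},X_n]$ (with a separate degenerate case when the middle window is empty, handled by the symmetry of $(x-X_m)^{-\frac14}(X_n-x)^{-\frac14}$); the middle window, where $\psi_1^{(m)}$ decays exponentially, is what produces the quoted exponent $-\frac18+\frac{\nu_2}{16}$. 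You instead trade the envelope of $h_n$ for a uniform sup bound $X_n^{-\frac16}$ and pay only the $L^1$ mass of $|h_m|$ beyond its turning point, which your two-slab computation correctly evaluates as $O(X_m^{-\frac12})$; this yields $(mn)^{\frac{\mu}{4}-\frac16}$, strictly stronger than the target for all $\nu_2\ge0$, with no case distinction and no dependence on $\nu_2$. The one genuine gap is the one you flag: the uniform bound $|\psi_1^{(n)}(x)|\le CX_n^{-\frac16}$ across the Airy window is not contained in the envelope estimates of \cite{LiangLuo2021} quoted in Section \ref{S4}, since $(X_n^2-x^2)^{-\frac14}|f_n|$ blows up at $x=X_n$ and the cancellation requires the small-argument asymptotics $\sqrt{\zeta}\,H^{(1)}_{1/3}(\zeta)\sim\zeta^{\frac16}$. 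You can close it without any matched asymptotics by applying the bound to the full Hermite function rather than to $\psi_1^{(n)}$: Lemma \ref{LP eigenfucntion bounds} with $p=\infty$ gives $\|h_n\|_{L^\infty}\le X_n^{\rho(\infty)}=X_n^{-\frac16}$ directly, while $|h_m(x)|\le C(x^2-X_m^2)^{-\frac14}e^{-|\zeta_m|}$ for $x>X_m$ follows from $|\psi_2^{(m)}|\le C\lambda_m^{-1}|\psi_1^{(m)}|$. With that substitution your proof is complete and in fact shorter and sharper than the paper's.
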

\begin{proof}
For the integral on $[X_m,X_n]$, we discuss two different cases:\\
Case 1.$X_n - X_n^{-\nu_2} \ge X_m + X_m^{-\nu_2}$. We split the integral into  three parts. First,
\begin{align*}
&\left|\int^{X_m+X_m^{-\nu_2}}_{X_m} \la x\ra^\mu e^{{\rm i}kx^\beta}\psi_1^{(m)}(x) \overline{\psi_1^{(n)}(x)}dx\right|\\
\leq&\int^{X_m+X_m^{-\nu_2}}_{X_m} \la x\ra^\mu (x^2-X_m^2)^{-\frac{1}{4}}(X_n^2-x^2)^{-\frac{1}{4}}dx\\
\leq& C X_m^\mu X_m^{-\frac{1}{4}} X_n^{-\frac{1}{4}} (X_n-X_m-X_m^{-\nu_2})^{-\frac{1}{4}}\int^{X_m+X_m^{-\nu_2}}_{X_m}(x-X_m)^{-\frac{1}{4}}dx\\
\leq& C X_m^\mu X_m^{-\frac{1}{4}} X_n^{-\frac{1}{4}} X_n^{\frac{\nu_2}{4}}X_m^{-\frac{3}{4}\nu_2} \leq C n^{-\frac{1}{8}-\frac{\nu_2}{8}+\frac\mu4}m^{-\frac{1}{8}-\frac{\nu_2}{8}+\frac\mu4}.
\end{align*}
When $x \geq X_m+X_m^{-\nu_2}$,  we have ${\rm i}\zeta_m \leq -(x-X_m)$. It follows 
\begin{align*}
&\left|\int_{X_m+X_m^{-\nu_2}}^{X_n-X_n^{-\nu_2}} \la x\ra^\mu e^{{\rm i}kx^\beta}\psi_1^{(m)}(x) \overline{\psi_1^{(n)}(x)}dx\right|\\
\leq& C\int_{X_m+X_m^{-\nu_2}}^{X_n-X_n^{-\nu_2}} \la x\ra^\mu (x^2-X_m^2)^{-\frac{1}{4}} (X_n^2-x^2)^{-\frac{1}{4}}e^{ {\rm i}\zeta_m}dx\\
\leq&C (2X_m)^\mu X_m^{-\frac{1}{4}}\left(X_n^2-(X_n-X_n^{-\nu_2})^2\right)^{-\frac{1}{4}} \int_{X_m+X_m^{-\nu_2}}^{X_n-X_n^{-\nu_2}} (x-X_m)^{-\frac{1}{4}} e^{ {\rm i}\zeta_m}dx\\
\leq&C (2X_m)^\mu X_m^{-\frac{1}{4}}X_n^{\frac{\nu_2-1}{4}}\int_0^\infty t^{-\frac{1}{4}}e^{-t}dt\leq C n^{-\frac{1}{8}+\frac{\nu_2}{16}+\frac\mu4}m^{-\frac{1}{8}+\frac{\nu_2}{16}+\frac\mu4}.
\end{align*}
Finally, from 
\begin{align*}
&\left|\int_{X_n-X_n^{-\nu_2}}^{X_n}\la x\ra^\mu e^{{\rm i}kx^\beta}\psi_1^{(m)}(x) \overline{\psi_1^{(n)}(x)}dx\right|\\
\leq&\int_{X_n-X_n^{-\nu_2}}^{X_n} \la x\ra^\mu (x^2-X_m^2)^{-\frac{1}{4}}(X_n^2-x^2)^{-\frac{1}{4}}dx\\
\leq&C (2X_m)^\mu\left((X_n-X_n^{-\nu_2})^2-X_m^2\right)^{-\frac{1}{4}}X_n^{-\frac{1}{4}} \int_{X_n-X_n^{-\nu_2}}^{X_n} (X_n-x)^{-\frac{1}{4}}dx\\
\leq& C n^{-\frac{1}{8}-\frac{\nu_2}{8}+\frac\mu4}m^{-\frac{1}{8}-\frac{\nu_2}{8}+\frac\mu4},
\end{align*}
it follows 
$\Big|\int_{X_m}^{X_n} \la x\ra^\mu e^{{\rm i}kx^\beta}h_m(x)\overline{h_n(x)}dx\Big|\leq \frac{C}
{(nm)^{\frac{1}{8}-\frac\mu4-\frac{\nu_2}{16}}}.$\\
Case 2. $X_n - X_n^{-\nu_2} < X_m + X_m^{-\nu_2}$. In fact, notice that the function  $(x-X_m)^{-\frac{1}{4}}(X_n-x)^{-\frac{1}{4}}$ is symmetric on $[X_m,X_n]$, we obtain 
\begin{align*}
&\left|\int_{X_m}^{X_n} \la x\ra^\mu e^{{\rm i}kx^\beta}\psi_1^{(m)}(x) \overline{\psi_1^{(n)}(x)}dx\right|\\
\leq&\int_{X_m}^{X_n} \la x\ra^\mu(x^2-X_m^2)^{-\frac{1}{4}}(X_n^2-x^2)^{-\frac{1}{4}}dx\\
\leq&C (2X_m^\mu)X_m^{-\frac{1}{4}}X_n^{-\frac{1}{4}}\int_{X_m}^{X_n} (x-X_m)^{-\frac{1}{4}}(X_n-x)^{-\frac{1}{4}}dx\\
\leq&C (2X_m^\mu)X_m^{-\frac{1}{4}}X_n^{-\frac{1}{4}}\int_{X_m}^{X_n} (x-X_m)^{-\frac{1}{2}}dx\\
\leq&C X_m^\mu X_m^{-\frac{1}{4}}X_n^{-\frac{1}{4}}(X_n-X_m)^\frac{1}{2}\leq (nm)^{\frac14 \mu-\frac18-\frac14\nu_2}.
\end{align*}
Thus,
$\displaystyle\left|\int_{X_m}^{X_n} \la x\ra^\mu e^{{\rm i}kx^\beta}h_m(x)\overline{h_n(x)}dx\right|\leq\frac{C}{m^{\frac18+\frac{\nu_2}{8}-\frac\mu4}n^{\frac18+\frac{\nu_2}{8}-\frac\mu4}}.$ 
Combining with the above two cases,  we finish the proof. 
\end{proof}

Lemma \ref{4.2.3} and \ref{4.2.3-1} follow directly by  the lemmas in subsection \ref{sub4.2.3}.  Combining with all the lemmas in this section we finish the proof of Lemma \ref{CriticalLemma}
for $1<\beta\leq 2$. 

\section{Proof of Lemma \ref{CriticalLemma} when $\beta>2$}\label{S5}
In the following we will suppose  that  $m\le n$ without losing the generality. As the case $1<\beta\leq 2$ we only need to estimate the integral on $[0,\infty]$. 
We first apply Theorem 3.1 in \cite{KT2005} to obtain the integral estimates on $[2X_m,\infty)$ as follows.
\begin{lemma}
For $\mu\ge0$, then we have
\[
\left|\int_{2X_m}^\infty\langle x\rangle^\mu e^{\rmi kx^\beta}h_m(x)\overline{{h}_n(x)}dx\right|
\le\frac C{(mn)^{\frac14\left(\frac13-\mu\right)}}.
\]
\end{lemma}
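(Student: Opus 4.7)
Since $|e^{\rmi k x^{\beta}}|=1$, the statement reduces to bounding
\[
I_{mn}:=\int_{2X_m}^{\infty}\langle x\rangle^{\mu}|h_m(x)\,h_n(x)|\,dx.
\]
The underlying idea is simple: on $[2X_m,\infty)$, $h_m$ sits well beyond its classical turning point $X_m=\sqrt{2m-1}$ and decays super-exponentially, while $h_n$ is uniformly bounded by a Koch--Tataru estimate. My plan is therefore to pull $\|h_n\|_{L^\infty}$ out of the integral and then control the remaining one-variable integral $\int\langle x\rangle^{\mu}|h_m|$ via WKB decay. This is essentially the simplification alluded to in the introduction: for $\beta>2$, one does not need to track any oscillatory cancellation between $h_m$ and $h_n$ on the tail, because $h_m$ already provides Gaussian smallness.

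More precisely, I would first apply Theorem 3.1 of \cite{KT2005}, which in particular yields the sharp $L^\infty$-bound $\|h_n\|_{L^\infty(\R)}\le Cn^{-1/12}$, and reduce to showing $\int_{2X_m}^{\infty}\langle x\rangle^{\mu}|h_m(x)|\,dx\le Ce^{-c'm}$. For this, I would invoke the Langer expansion $h_m=\psi_1^{(m)}+\psi_2^{(m)}$ already used in Section \ref{S4}, together with the Hankel-function estimate $|\sqrt{\pi\zeta_m/2}\,H_{1/3}^{(1)}(\zeta_m)|\le e^{-|\zeta_m|}$ valid for $x>X_m$ (Lemma 5.4 of \cite{LiangLuo2021}). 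This gives $|h_m(x)|\le C(x^2-X_m^2)^{-1/4}e^{-\zeta_m(x)}$ on $[2X_m,\infty)$. The elementary inequality $\sqrt{t^2-X_m^2}\ge\tfrac{\sqrt3}{2}t$ on $[2X_m,\infty)$ then yields $\zeta_m(x)\ge cX_m(x-X_m)$ on this interval, so in particular $\zeta_m(2X_m)\gtrsim X_m^2\sim m$, and straightforward integration produces the claimed exponential smallness.

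Putting the two ingredients together gives $I_{mn}\le Cn^{-1/12}e^{-c'm}$. Since $\mu\ge 0$ forces $\tfrac14(\tfrac13-\mu)\le\tfrac{1}{12}$, we have $n^{-1/12}\le n^{-\frac14(\frac13-\mu)}$, and because exponential decay beats any polynomial, also $e^{-c'm}\le Cm^{-\frac14(\frac13-\mu)}$ (trivially when $\mu\ge \tfrac13$, since then the polynomial exponent is non-positive). These combine into the desired bound. The only non-routine point is selecting the right form of the $L^\infty$-bound from \cite{KT2005}; the exponential decay of $h_m$ far beyond its turning point is immediate from the WKB/Langer machinery already deployed for the $1<\beta\le 2$ case, so the proof is expected to be very short.
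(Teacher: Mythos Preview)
Your proposal is correct and follows essentially the same route as the paper: trivialize the oscillatory factor, pull out $\|h_n\|_{L^\infty}\le CX_n^{-1/6}$ via Koch--Tataru, and use exponential decay of $h_m$ beyond its turning point to control the remaining integral. The only difference is in the last step: the paper invokes the Koch--Tataru weighted estimates (Theorem~3.1/Corollary~3.2 of \cite{KT2005}) to bound $\|\langle x\rangle^{\mu+1}h_m\|_{L^2(|x|\ge 2X_m)}$ after a H\"older splitting $\langle x\rangle^\mu=\langle x\rangle^{-1}\cdot\langle x\rangle^{\mu+1}$, whereas you go back to the Langer expansion of Section~\ref{S4} and Lemma~5.4 of \cite{LiangLuo2021} to get $|h_m(x)|\le C(x^2-X_m^2)^{-1/4}e^{-|\zeta_m|}$ directly. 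Both yield the needed exponential smallness; the paper's version is marginally cleaner because it works uniformly in $m$ without reopening the Langer machinery (your argument as written needs $m>m_0$, though the finitely many remaining cases are of course trivial).
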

\begin{proof}
By Theorem 3.1 in \cite{KT2005} we have 
\begin{align*}
&\left|\int_{2X_m}^\infty\langle x\rangle^\mu e^{\rmi kx^\beta}h_m(x)\overline{{h}_n(x)}dx\right|\\
\le&C\int_{2X_m}^\infty \langle x\rangle^{-1} \langle x\rangle^{\mu+1}|h_m(x)||h_n(x)|dx\\
\le&C\|\langle x\rangle^{-1}\|_{L^2(\mathbb R)}^\frac12\cdot\|\langle x\rangle^{\mu+1} h_m(x)\|_{L^2(\{x:|x|\ge2X_m\})}^\frac12\cdot\|h_n\|_{L^\infty(\mathbb R)}\\
\le&CX_m^{-\frac16}X_n^{-\frac16}\le C(mn)^{\frac14\left(\mu-\frac13\right)}. 
\end{align*}
\end{proof}

Next we consider the integral on $[0,2X_m]$.  Define $\nu_3=\frac{5\beta-4}{2(\beta-1)(2\beta-1)}\in(0,1)$ when $\beta>2$. In the following  we will discuss two different cases depending on whether $X_n^{\nu_3}\ge2X_m$ or not.

\textbf{The case $X_n^{\nu_3}\ge 2X_m$}. In this case we directly estimate the rest  integral on $[0,2X_m]$.
\begin{lemma}\label{between12first}
For $X_n^{\nu_3}\geq 2X_m$ and $\beta>2$, then
$$\displaystyle\left|\int_{0}^{2X_m} \la x\ra^\mu e^{{\rm i}kx^{\beta}}h_m(x)\overline{h_n(x)}dx\right|\leq \frac C {(mn)^{\frac{1}{4}\left(\frac{\beta-2}{4\beta-2}-\mu\right)}}. $$
\end{lemma}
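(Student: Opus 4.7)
The plan is to exploit the hypothesis $X_n^{\nu_3}\ge 2X_m$ to place the interval $[0,2X_m]$ strictly inside the classical region of $h_n$, uniformly separated from its turning point $X_n$. Since $\nu_3<1$ for $\beta>2$, one has $X_n^2-x^2\ge X_n^2/2$ throughout $[0,2X_m]$ once $n$ is large, and Corollary~3.2 of \cite{KT2005} (or equivalently the Langer asymptotic expansion recalled in Section~\ref{S4} combined with the Hankel bound $|\sqrt{\pi\zeta_n/2}\,H_{1/3}^{(1)}(\zeta_n)|\le C$) yields the uniform pointwise estimate $|h_n(x)|\le CX_n^{-1/2}$ on $[0,2X_m]$.

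Given this bound, the oscillatory phase $e^{\rmi kx^\beta}$ plays no role at this step: discarding it and applying Cauchy--Schwarz with $\|h_m\|_{L^2(\mathbb R)}=1$ one obtains
\[
\left|\int_0^{2X_m}\langle x\rangle^\mu e^{\rmi kx^\beta}h_m(x)\overline{h_n(x)}\,dx\right|
\le CX_n^{-1/2}\|\langle x\rangle^\mu\|_{L^2[0,2X_m]}
\le CX_n^{-1/2}X_m^{\mu+1/2}.
\]
In particular Lemma~\ref{oscillatory integral lemma} is not invoked here; it is reserved for the complementary sub-lemma corresponding to the regime $X_n^{\nu_3}<2X_m$, where $[0,2X_m]$ is no longer safely away from the turning point of $h_n$.

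It remains to convert $X_n^{-1/2}X_m^{\mu+1/2}$ into the required form $C(mn)^{-A}$ with $A=\tfrac14\bigl(\tfrac{\beta-2}{4\beta-2}-\mu\bigr)$. Writing the hypothesis as $m\lesssim n^{\nu_3}$ and comparing logarithmically, the desired inequality reduces to $A(1+\nu_3)\le(1-\nu_3)/4-\nu_3\mu/2$. Computing the two sides as affine functions of $\mu$ one sees that their difference has strictly positive $\mu$-slope $(1-\nu_3)/4$, so it suffices to treat $\mu=0$, where the condition becomes $(\beta-2)(1+\nu_3)\le(1-\nu_3)(4\beta-2)$. Substituting the explicit value $\nu_3=\tfrac{5\beta-4}{2(\beta-1)(2\beta-1)}$ and clearing denominators, this simplifies to the polynomial inequality
\[
12\beta^3-43\beta^2+46\beta-16 \;=\; (\beta-2)(12\beta^2-19\beta+8)\;\ge\; 0.
\]
The quadratic factor has discriminant $361-384=-23<0$ and is therefore strictly positive, so the inequality holds for all $\beta\ge 2$. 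This elementary verification is the only substantive step; the main conceptual content is the calibration of $\nu_3$, which is chosen exactly so as to bring the simple Cauchy--Schwarz estimate above onto the critical scale dictated by $A$, leaving the complementary regime to be handled by a genuine oscillatory integral estimate.
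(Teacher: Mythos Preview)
Your argument is correct but follows a different route from the paper. The paper bounds $\langle x\rangle^\mu\le CX_m^\mu$, then applies a three-factor H\"older inequality on $[0,X_n^{\nu_3}]$ with $\|h_m\|_{L^2}\,\|h_n\|_{L^p}\,\|1\|_{L^q}$, $\frac1p+\frac1q=\frac12$, and the Koch--Tataru $L^p$ bound $\|h_n\|_{L^p}\le CX_n^{-(1/2-1/p)}$ for $2\le p<4$ (Lemma~\ref{LP eigenfucntion bounds}). With the specific choice $q=\tfrac{4\beta-3}{\beta-1}$ the exponent on $X_n$ comes out to be exactly $-\tfrac{\beta-2}{4\beta-2}$, and then the crude inequality $m\le n$ already gives the claimed $(mn)$-bound with no further algebra. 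By contrast you use the cruder pointwise estimate $|h_n|\le CX_n^{-1/2}$ on the classical region (this amounts to the endpoint $p=\infty$ away from the turning point) and Cauchy--Schwarz against $h_m$; the resulting bound $X_n^{-1/2}X_m^{\mu+1/2}$ is weaker in general, so you are forced to exploit the full strength of the hypothesis $m\lesssim n^{\nu_3}$ and verify the polynomial inequality $(\beta-2)(12\beta^2-19\beta+8)\ge 0$. Your route is more elementary in its analytic input but pays for it with the algebraic check; the paper's choice of $q$ is calibrated so that only $m\le n$ is needed.
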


\begin{proof}
	From Lemma \ref{LP eigenfucntion bounds},
\begin{align*}
&\left|\int_{0}^{2X_m} \la x\ra^\mu e^{{\rm i}kx^{\beta}}h_m(x) \overline{h_n(x)}dx\right|\\
\leq&\int_{0}^{2X_m} \la x\ra^\mu|h_m(x)||h_n(x)|dx \leq C X_m^\mu \int_{0}^{X_n^{\nu_3}} |h_m(x)||h_n(x)|dx\\
\leq&C X_m^\mu\|h_m(x)\|_{L^2}\|h_n(x)\|_{L^p}\left(\int_0^{X_n^{\nu_3}}dx\right)^{\frac{1}{q}}\\
\leq & C X_m^{\mu}X_n^{-\left(\frac{1}{2}-\frac{1}{p}\right)}X_n^{\frac{\nu_3}{q}} \leq C (mn)^{\frac{1}{4}\left(\mu-\frac{\beta-2}{4\beta-2}\right)},
\end{align*}
where $\frac{1}{p}+\frac{1}{q}=\frac{1}{2},q=\frac{4\beta-3}{\beta-1}>4.$
\end{proof}

\textbf{The case $X_n^{\nu_3}<2X_m$}. In this case we split the rest integral into two parts as follows.
\begin{lemma}
	For $k\ne0, X_n^{\nu_3}< 2X_m$ and  $\beta>2$, then
	$$\displaystyle\left|\int_{X_n^{\nu_3}}^{2X_m} \la x\ra^\mu e^{{\rm i}kx^{\beta}}h_m(x)\overline{h_n(x)}dx\right|\leq \frac{C(|k\beta|^{-1}\vee 1)}{(mn)^{\frac{1}{4}\left(\frac{\beta-2}{4\beta-2}-\mu\right)}}. $$
\end{lemma}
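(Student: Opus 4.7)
The plan is to apply Lemma \ref{oscillatory integral lemma} with the phase $\phi(x)=kx^\beta$, whose derivative $\phi'(x)=k\beta\,x^{\beta-1}$ is monotone and satisfies $|\phi'(x)|\ge |k\beta|\,X_n^{\nu_3(\beta-1)}$ on $[X_n^{\nu_3},2X_m]$. Setting $F(x)=\la x\ra^\mu h_m(x)\overline{h_n(x)}$, the oscillatory integral lemma produces
\[
\left|\int_{X_n^{\nu_3}}^{2X_m} e^{\rmi kx^\beta}F(x)\,dx\right|\le \frac{C}{|k\beta|\,X_n^{\nu_3(\beta-1)}}\Bigl(|F(X_n^{\nu_3})|+|F(2X_m)|+\int_{X_n^{\nu_3}}^{2X_m}|F'(x)|\,dx\Bigr).
\]
The endpoint at $2X_m$ is negligible since $h_m$ decays super-polynomially past its turning point $X_m$; the endpoint at $X_n^{\nu_3}$ is controlled pointwise by standard Hermite bounds.

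For the term $\int |F'|\,dx$, I would expand $F'$ using the Hermite recursion $h_m'=\sqrt{m/2}\,h_{m-1}-\sqrt{(m+1)/2}\,h_{m+1}$ (and analogously for $h_n$), producing products of Hermite functions with a loss of at most $\sqrt m$ or $\sqrt n$. Applying H\"older's inequality with exponents $\tfrac1p+\tfrac1q=\tfrac12$ and the $L^p$ bounds of Corollary 3.2 in \cite{KT2005}, exactly in the spirit of Lemma \ref{between12first}, controls the resulting $L^1$ integrals by a product of two $L^p$ decay factors times the measure factor $(2X_m)^{1/q}$. A straightforward algebraic check then confirms that the choice $\nu_3=\frac{5\beta-4}{2(\beta-1)(2\beta-1)}$ is exactly the exponent that makes the oscillatory gain $X_n^{-\nu_3(\beta-1)}$ offset the amplitude loss, producing the claimed $(mn)^{-\frac14(\frac{\beta-2}{4\beta-2}-\mu)}$. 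The factor $|k\beta|^{-1}\vee 1$ handles the regime $|k\beta|\le 1$: there the integration-by-parts bound is useless, and one simply applies H\"older together with the $L^p$ Hermite bounds directly on $[X_n^{\nu_3},2X_m]$, mirroring the trivial side of the estimate in Lemma \ref{between12first}.

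The main obstacle is the derivative integral: since $x$ traverses the inner turning point $X_m$, the pointwise character of $h_m$ changes from oscillatory to exponentially decaying along the interval, and the $\sqrt{mn}$ loss from differentiating two Hermite factors can dominate the oscillatory gain if handled crudely. Should the unified H\"older estimate prove too lossy, I would split the interval into $[X_n^{\nu_3},X_m-X_m^{-1/3}]$, $[X_m-X_m^{-1/3},X_m+X_m^{-1/3}]$, and $[X_m+X_m^{-1/3},2X_m]$, using Langer's asymptotics on the first piece (oscillatory regime of $h_m$), the pointwise Langer bound on the middle piece (turning point), and exponential decay on the third, in the same style as the subdivision used throughout Section \ref{S4}.
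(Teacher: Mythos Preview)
Your setup is exactly the paper's: apply Lemma~\ref{oscillatory integral lemma} with $\phi(x)=x^\beta$ (so $|\phi'|\ge \beta X_n^{\nu_3(\beta-1)}$ on the interval) and $\psi(x)=\la x\ra^\mu h_m(x)\overline{h_n(x)}$. Where you diverge is in the derivative term $\int|\psi'|\,dx$, which you treat as the main obstacle. In fact it is trivial: the paper simply uses Cauchy--Schwarz together with the elementary bound $\|h_n'\|_{L^2}\le X_n$ (which follows from $\|h_n'\|_{L^2}^2+\|xh_n\|_{L^2}^2=\lambda_n$), obtaining
\[
\int_{X_n^{\nu_3}}^{2X_m}\la x\ra^\mu |h_m\,\overline{h_n'}|\,dx
\ \le\ CX_m^{\mu}\,\|h_m\|_{L^2}\|h_n'\|_{L^2}\ \le\ CX_m^{\mu}X_n,
\]
and similarly $CX_m^{\mu+1}$ for the $h_m'$ piece and $CX_m^{\mu-1}$ for the $\la x\ra^{\mu-1}$ piece. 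The dominant contribution is $X_m^\mu X_n$, and the identity $1-\nu_3(\beta-1)=-\tfrac{\beta-2}{4\beta-2}$ (which is why $\nu_3$ was chosen as it was) gives the result directly. So your worry about a $\sqrt{mn}$ loss, the recourse to the Koch--Tataru $L^p$ bounds, and the contingency plan of subdividing around the turning point $X_m$ are all unnecessary here; the paper's argument is a two-line Cauchy--Schwarz. Your alternative route would also close (your Hermite-recursion step with the choice $p=q=2$ in H\"older \emph{is} the paper's argument), but the extra machinery buys nothing. Likewise, no separate trivial estimate is needed for $|k\beta|\le1$: the oscillatory bound already carries the factor $|k\beta|^{-1}$, which dominates $|k\beta|^{-1}\vee1$.
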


\begin{proof}
	Denote $\psi(x)=\la x\ra^{\mu}h_m(x)h_n(x),\phi(x)=x^\beta$. Notice that when $x\in[X_n^{\nu_3},2X_m]$, $|\phi'(x)|\geq\beta X_n^{\nu_3(\beta-1)}$. So by Lemma \ref{oscillatory integral lemma},
	\begin{align*}
	&\left|\int_{X_n^{\nu_3}}^{2X_m} \la x\ra^\mu e^{{\rm i}kx^{\beta}}h_m(x) \overline{h_n(x)}dx\right|\\
		\leq& C|k\beta|^{-1}X_n^{-\nu_3(\beta-1)}\left[\left| \psi(2X_m)\right| + \int_{X_n^{\nu_3}}^{2X_m} \left|\psi^\prime(x)\right|dx\right],
	\end{align*}
	where
	$
	\left|\psi(2X_m)\right| \leq C X_m^{\mu}$, 
	and
	$|\int_{X_n^{\nu_3}}^{2X_m} \la x\ra^{\mu-1}h_m(x)\overline{h_n(x)}dx|\le CX_m^{\mu-1}$.
	By Lemma \ref{LP eigenfucntion bounds}, $\|h_m'(x)\|_{L^2}\le CX_m$. Thus,
	\begin{gather*}
	\left|\int_{X_n^{\nu_3}}^{2X_m} \la x\ra^{\mu}h_m'(x)\overline{h_n(x)}dx\right|\le CX_m^{\mu}X_m=CX_m^{\mu+1},\\
	\left|\int_{X_n^{\nu_3}}^{2X_m} \la x\ra^{\mu}h_m(x)\overline{h_n'(x)}dx\right|\le CX_m^{\mu}X_n\le CX_m^{\mu}X_n.
	\end{gather*}
	Combining with all the conclusions we have
	\begin{align*}
	&\left|\int_{X_n^{\nu_3}}^{2X_m} \la x\ra^\mu e^{{\rm i}kx^{\beta}}h_m(x) \overline{h_n(x)}dx\right|\\
	\leq&C|k\beta|^{-1}X_n^{-\nu_3(\beta-1)+1} X_m^{\mu}\le   C |k\beta|^{-1} (X_mX_n)^\frac\mu2X_n^{-\frac{\beta-2}{4\beta-2}}\\
	 \leq &C |k\beta|^{-1}(mn)^{\frac{1}{4}\left(\mu-\frac{\beta-2}{4\beta-2}\right)}.
	\end{align*}
\end{proof}

\begin{lemma}
	For $X_n^{\nu_3}< 2X_m$ and  $\beta>2$, then
\[
\left|\int_{0}^{X_n^{\nu_3}} \la x\ra^\mu e^{{\rm i}kx^{\beta}}h_m(x)\overline{h_n(x)}dx\right|\leq \frac{C}{(mn)^{\frac{1}{4}\left(\frac{\beta-2}{4\beta-2}-\mu\right)}}.
\]
\end{lemma}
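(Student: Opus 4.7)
The strategy is to adapt the same Hölder-plus-Koch--Tataru $L^p$-bound argument used in Lemma \ref{between12first}; the only novelty here is that the integration range $[0,X_n^{\nu_3}]$ is already contained in $[0,2X_m]$ by assumption, so no interval extension is needed and the weight $\langle x\rangle^{\mu}$ can be controlled directly. Concretely, since $x\le X_n^{\nu_3}<2X_m$ on the integration interval and $\mu\ge0$, one has $\langle x\rangle^{\mu}\le CX_m^{\mu}$, which yields
\[
\left|\int_{0}^{X_n^{\nu_3}}\la x\ra^{\mu}e^{\rmi kx^{\beta}}h_m(x)\overline{h_n(x)}\,dx\right|\le CX_m^{\mu}\int_{0}^{X_n^{\nu_3}}|h_m(x)||h_n(x)|\,dx.
\]

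The next step is a three-factor Hölder inequality with exponents $\frac12$, $\frac1p$, $\frac1q$ where $\frac1p+\frac1q=\frac12$:
\[
\int_{0}^{X_n^{\nu_3}}|h_m(x)||h_n(x)|\,dx\le\|h_m\|_{L^2(\mathbb R)}\,\|h_n\|_{L^p(\mathbb R)}\,\bigl(X_n^{\nu_3}\bigr)^{1/q}.
\]
I would take $q=\frac{4\beta-3}{\beta-1}$, exactly as in Lemma \ref{between12first}, so that $q>4$ and Lemma \ref{LP eigenfucntion bounds} (the Koch--Tataru $L^p$ bound) gives $\|h_n\|_{L^p}\le CX_n^{-(1/2-1/p)}$, while $\|h_m\|_{L^2}=1$.

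Collecting the exponents is then a routine computation. One checks that $\tfrac12-\tfrac1p=\tfrac1q$, whence the $X_n$-exponent is $(\nu_3-1)/q$. Substituting $\nu_3=\frac{5\beta-4}{2(\beta-1)(2\beta-1)}$ gives
\[
\frac{\nu_3-1}{q}=-\frac{\beta-2}{2(2\beta-1)},
\]
so the right-hand side is bounded by $CX_m^{\mu}X_n^{-\frac{\beta-2}{2(2\beta-1)}}$. Finally, using $X_m\le X_n$ one rewrites $X_m^{\mu}\le X_m^{\mu/2}X_n^{\mu/2}$ and distributes the negative $X_n$-exponent half to each factor (again exploiting $X_m\le X_n$) to produce the symmetric estimate $C(X_mX_n)^{\mu/2-\frac{\beta-2}{4(2\beta-1)}}\le C(mn)^{\frac14(\mu-\frac{\beta-2}{4\beta-2})}$, which is exactly the claimed bound.

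No step here is genuinely difficult: the ingredients (Hölder, the Hermite $L^p$-bound from Lemma \ref{LP eigenfucntion bounds}, and the upper bound $\langle x\rangle^{\mu}\le CX_m^{\mu}$) were already set up for Lemma \ref{between12first}. The only point requiring some care is verifying, as above, that the arithmetic of exponents $(\nu_3-1)/q=-(\beta-2)/(2(2\beta-1))$ matches the target decay rate $\frac{\beta-2}{4\beta-2}$ that appears in $l_\ast$ for $\beta>2$; this is what justified the specific choice of $\nu_3$ in the first place. In particular, unlike the previous subcases, no oscillatory-integral estimate (Lemma \ref{oscillatory integral lemma}) is invoked, because here $\langle x\rangle^{\mu}$ on the smaller interval is already small enough for a bare absolute-value bound to suffice.
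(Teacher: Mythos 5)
Your proof is correct and coincides with the paper's intended argument: the paper omits this proof, noting only that it is "similar to" the preceding lemma for the case $X_n^{\nu_3}\ge 2X_m$, and your write-up is precisely that adaptation (bound $\la x\ra^\mu\le CX_m^\mu$ using $X_n^{\nu_3}<2X_m$, then the three-factor H\"older with $q=\frac{4\beta-3}{\beta-1}$ and the Koch--Tataru bound). The exponent arithmetic $\frac{\nu_3-1}{q}=-\frac{\beta-2}{4\beta-2}$ checks out, so nothing is missing.
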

\begin{proof}
The proof is similar as Lemma \ref{between12first}, we omit it.
\end{proof}
Hence, combining the above four Lemmas we obtain Lemma \ref{CriticalLemma} when $\beta>2$. 

\noindent
\textbf{Acknowledgements}\addcontentsline{toc}{section}{Acknowledgements}
The authors were partially supported by Natural Science Foundation of Shanghai (19ZR1402400) and NSFC grant (12071083).

\section{Appendix}\label{S6}

\indent  In the following we will introduce two technical lemmas  without proof. The first lemma  provides an estimate of oscillatory  integral.  For more details, see \cite{Stein}.
\begin{lemma}(\cite{Stein})\label{oscillatory integral lemma}
Suppose $\phi$ is real-valued and smooth in $(A,B)$, $\psi$ is complex-valued, and that $|\phi^{(k)}(x)|\geq1$ for all $x\in(A,B)$. Then
$$\left|\int_A^B e^{\rm i\lambda\phi(x)}\psi(x)dx\right|\leq c_k\lambda^{-1/k}\left[|\psi(B)|+\int_A^B|\psi^\prime(x)|dx\right]$$
holds when:\\
(i) $k\geq 2$, or\\
(ii) $k=1$ and $\phi^\prime(x)$ is monotonic.\\
The bound $c_k$ is independent of $\phi$, $\psi$ and $\lambda$.
\end{lemma}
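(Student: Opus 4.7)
The plan is to follow the standard two-step proof of the van der Corput lemma due to Stein. First I would establish the ``bare'' case where $\psi\equiv 1$, showing that
\[
\sup_{A\le y\le B}\left|\int_A^y e^{\rmi\lambda\phi(x)}dx\right|\le c_k\lambda^{-1/k}
\]
under the two hypotheses, with $c_k$ depending only on $k$. Once this uniform bound on the antiderivative is in hand, the weighted version follows from an integration-by-parts trick: setting $F(y)=\int_A^y e^{\rmi\lambda\phi(x)}dx$, one has $F(A)=0$ and
\[
\int_A^B e^{\rmi\lambda\phi(x)}\psi(x)dx=F(B)\psi(B)-\int_A^B F(x)\psi'(x)dx,
\]
so both terms are controlled by $\sup_{[A,B]}|F|\cdot\bigl(|\psi(B)|+\int_A^B|\psi'(x)|dx\bigr)$, yielding the claimed bound directly. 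The asymmetry between $\psi(B)$ and $\psi(A)$ in the statement comes precisely from using the antiderivative that vanishes at $A$.

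For the bare case with $k=1$ and $\phi'$ monotonic, I would proceed by direct integration by parts. Writing $e^{\rmi\lambda\phi(x)}=\frac{1}{\rmi\lambda\phi'(x)}\frac{d}{dx}e^{\rmi\lambda\phi(x)}$, integration by parts produces
\[
\int_A^y e^{\rmi\lambda\phi(x)}dx=\left[\frac{e^{\rmi\lambda\phi(x)}}{\rmi\lambda\phi'(x)}\right]_A^y+\frac{1}{\rmi\lambda}\int_A^y e^{\rmi\lambda\phi(x)}\,d\!\left(\frac{1}{\phi'(x)}\right).
\]
The boundary term is bounded by $2/\lambda$ because $|\phi'|\ge1$, and the remaining integral is bounded by the total variation of $1/\phi'$, which equals $|1/\phi'(y)-1/\phi'(A)|\le 2$ precisely because $\phi'$ (hence $1/\phi'$) is monotonic. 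This yields $\sup_y|F(y)|\le 4/\lambda$.

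For $k\ge 2$ I would argue by induction on $k$. The hypothesis $|\phi^{(k)}|\ge1$ together with continuity forces $\phi^{(k)}$ to have constant sign, so $\phi^{(k-1)}$ is strictly monotonic on $(A,B)$. Let $c\in[A,B]$ be the unique point where $|\phi^{(k-1)}|$ is minimized, and for a parameter $\delta>0$ split $[A,B]$ into $I_\delta=(c-\delta,c+\delta)\cap(A,B)$ and at most two outer sub-intervals. On each outer piece one has $|\phi^{(k-1)}(x)|\ge\delta$, so rescaling by a factor of $\delta$ (i.e.\ replacing $\lambda$ by $\lambda\delta$ and $\phi$ by $\phi/\delta$) puts us in the hypothesis of the induction at order $k-1$, giving a contribution of order $(\lambda\delta)^{-1/(k-1)}$. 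On $I_\delta$ we use the trivial bound $2\delta$. Choosing $\delta=\lambda^{-1/k}$ balances the two contributions and yields the desired $c_k\lambda^{-1/k}$, closing the induction.

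The main obstacle, and the subtle point in the inductive step for $k\ge2$, is that the final application of integration by parts against $\psi$ requires a uniform bound on $\sup_y|F(y)|$ rather than a bound on a single integral. I would therefore run the induction directly on the quantity $\sup_{A\le y\le B}\left|\int_A^y e^{\rmi\lambda\phi(x)}dx\right|$, which behaves well under splitting into sub-intervals (the supremum over $[A,B]$ is controlled by a sum of suprema over the pieces). Tracking the constant through the recursion $c_k\le 2c_{k-1}^{(k-1)/k}+2$ confirms $c_k$ is finite and depends only on $k$, completing the proof.
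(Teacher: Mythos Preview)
The paper does not supply a proof of this lemma; it is stated in the Appendix as a technical input taken verbatim from Stein's \emph{Harmonic Analysis} and explicitly introduced ``without proof.'' Your proposal is correct and is precisely the classical van der Corput argument given in Stein: reduce to $\psi\equiv1$ by integration by parts against the antiderivative $F(y)=\int_A^y e^{\rmi\lambda\phi}$, handle $k=1$ by a single integration by parts using the monotonicity of $1/\phi'$, and prove $k\ge2$ by induction via the $\delta$-splitting around the minimizer of $|\phi^{(k-1)}|$ with $\delta=\lambda^{-1/k}$.
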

The second lemma shows that the $L^p$-norm of the eigenfunction of harmonic oscillating operator can be controlled by its $L^2$-norm.
\begin{lemma}[\cite{KT2005}]\label{LP eigenfucntion bounds}
	Sppose that $h(x)$ is the eigenfunction of Herimite operator with the corresponding eigenvalue $\mu^2$. Then
	$$\|h\|_{L^p}\leq\mu^{\rho(p)}\|h\|_{L^2},$$
	where
\[
	\rho(p)=
	\begin{cases}
	-(\frac{1}{2}-\frac1p), & 2\leq p<4,\\
	-\frac13+\frac13(\frac12-\frac1p),&4<p\leq\infty.
	\end{cases}
\]
\end{lemma}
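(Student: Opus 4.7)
The plan is to prove the $L^p$ eigenfunction bounds by going back to the Langer uniform asymptotics already used throughout Section 4, extracting pointwise bounds on $|h(x)|$ in three natural regions, and then integrating. Normalize so that $\|h\|_{L^2}=1$ and write $X=\mu$, so that the classical turning points are at $\pm X$. Since $h$ is either even or odd, it suffices to estimate $\|h\|_{L^p([0,\infty))}$.

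First, partition $[0,\infty)$ into the classically allowed bulk $I_1=[0, X-X^{-1/3}]$, the Airy region $I_2=[X-X^{-1/3}, X+X^{-1/3}]$, and the exponentially small tail $I_3=[X+X^{-1/3},\infty)$. Then I would import the same bounds used in Section 4 (derived from $h(x)=(X^2-x^2)^{-1/4}\sqrt{\pi\zeta/2}\,H^{(1)}_{1/3}(\zeta)\cdot(1+O(X^{-2}))$ together with Lemmas 5.4 and 5.5 of \cite{LiangLuo2021}) to obtain
\begin{equation*}
|h(x)| \le C(X^2-x^2)^{-1/4}\ \text{on } I_1,\qquad |h(x)|\le CX^{-1/6}\ \text{on } I_2,\qquad |h(x)|\le Ce^{-c(x-X)^{3/2}X^{1/2}}\ \text{on }I_3.
\end{equation*}
The bound on $I_2$ comes from the fact that $\zeta$ has order unity there, so that $\sqrt{\pi\zeta/2}\,H^{(1)}_{1/3}(\zeta)$ is bounded, and the prefactor $(X^2-x^2)^{-1/4}$ is replaced by its regularized Airy counterpart, scaled like $X^{-1/6}$.

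Next I would compute the $L^p$ contributions region by region. On $I_1$, the substitution $u=X-x$ gives
\begin{equation*}
\int_{I_1}|h|^p\,dx \le C X^{-p/4}\int_{X^{-1/3}}^{X} u^{-p/4}\,du,
\end{equation*}
which evaluates to $CX^{1-p/2}$ when $p<4$ (the integral is controlled by its lower endpoint $u=X$) and to $CX^{-p/6-1/3}$ when $p>4$ (the integral is controlled by the upper cutoff $u=X^{-1/3}$). On $I_2$ of length $2X^{-1/3}$, the uniform pointwise bound gives $\int_{I_2}|h|^p\,dx\le CX^{-p/6-1/3}$, which matches the $I_1$ contribution at $p>4$ and is strictly smaller at $p<4$. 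On $I_3$, the exponential decay yields a contribution of order $e^{-cX^{2/3}}$, negligible for either range.

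Taking $p$th roots and reading off the exponents then produces $\|h\|_{L^p}\le C\mu^{1/p-1/2}=C\mu^{-(1/2-1/p)}$ for $2\le p<4$ and $\|h\|_{L^p}\le C\mu^{-1/6-1/(3p)}=C\mu^{-1/3+(1/2-1/p)/3}$ for $4<p\le\infty$, which is exactly $\rho(p)$ in the two ranges. The main obstacle is the Airy region $I_2$: this is where a crude WKB bound fails and one genuinely needs the turning-point asymptotics to get the $X^{-1/6}$ pointwise bound. This sharp bound is precisely what forces the kink at $p=4$, where the bulk and Airy contributions balance; handling this borderline (and checking that the bound degenerates only by a logarithm at $p=4$ itself, hence the strict inequalities in the statement) is the most delicate part. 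The remaining bookkeeping—treating $x<0$ by symmetry and showing $I_3$ is harmless—is routine.
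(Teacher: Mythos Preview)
The paper does not prove this lemma: it is quoted from \cite{KT2005} and explicitly listed among the ``technical lemmas without proof'' in the Appendix. So there is nothing to compare against; your sketch is a self-contained argument that the paper does not attempt. The strategy you outline---pointwise WKB/Langer bounds in the bulk, the Airy region, and the forbidden region, followed by direct $L^p$ integration---is the standard route to these estimates and yields the stated exponents; the computation of $\rho(p)$ in both ranges is correct.

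One point to fix: your treatment of $I_3$ is too hasty. The bound $|h(x)|\le Ce^{-c(x-X)^{3/2}X^{1/2}}$ is only of order one at the left endpoint $x=X+X^{-1/3}$, and the resulting $L^p$ integral over $I_3$ is of order $X^{-1/3}$, not $e^{-cX^{2/3}}$; for $p>4$ this would swamp the $X^{-p/6-1/3}$ you need. The cure is simply to keep the prefactor: on $I_3$ the Langer expansion gives $|h(x)|\le C(x^2-X^2)^{-1/4}e^{-|\zeta|}$, and then the same substitution as for $I_2$ shows the $I_3$ contribution is $O(X^{-p/6-1/3})$, i.e.\ the same order as $I_2$ rather than negligible. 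With that correction (and the trivial separate handling of the finitely many eigenvalues below the threshold $m_0$ where the Langer asymptotics are invoked), the argument goes through.
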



\begin{thebibliography}{99}
 
\bibitem{Bam2017}
Bambusi, D.:  Reducibility of 1-d Schr\"odinger equation with time quasiperiodic unbounded perturbations. II. Commun. Math. Phys. \textbf{353}, 353-378 (2017)

\bibitem{Bam2018}
Bambusi, D.:  Reducibility of 1-d Schr\"odinger equation with time quasiperiodic unbounded perturbations. I. Trans. Amer. Math. Soc. \textbf{370}, 1823-1865 (2018)


\bibitem{BLM18}
Bambusi, D., Langella, D., Montalto, R.: Reducibility of non-resonant transport equation on with unbounded perturbations. Ann. Henri Poincar\'e,  {\bf 20}, 1893-1929 (2019).



\bibitem{BLM2021}
Bambusi, D., Langella, D., Montalto, R.: Growth of Sobolev norms for unbounded perturbations of the Laplacian on flat tori. arXiv:2012.02654.



 \bibitem{BG2001}
Bambusi, D., Graffi, S.: Time quasi-periodic unbounded perturbations of Schr\"odinger operators and KAM methods. Commun. Math. Phys. \textbf{219}, 465-480 (2001)


\bibitem{BGMR2019}
Bambusi, D., Gr\'ebert, B., Maspero, A., Robert, D.: Growth of Sobolev norms for abstract linear Schr\"odinger equations.
J. Eur. Math. Soc. \textbf{23}, 557-583 (2021)

\bibitem{BGMR2018}
Bambusi, D., Gr\'{e}bert, B., Maspero, A., Robert, D.:  Reducibility of the quantum harmonic oscillator in $d$-dimensions with polynomial time-dependent perturbation. Anal. \& PDE \textbf{11}, 775-799 (2018)


\bibitem{BM2019}
Berti, M., Maspero, A.: Long time dynamics of Schr\"odinger and wave equations on flat tori.  J. Diff. Eqs., {\bf 267(2)}, 1167-1200 (2019). 

 
\bibitem{Bou99a}
Bourgain, J.: Growth of Sobolev norms in linear Schr\"odinger equations with smooth time dependent potentials. J. Anal. Math., {\bf 77}, 315-348 (1999). 

\bibitem{Bou99b}
Bourgain, J.: Growth of Sobolev norms in linear Schr\"odinger equations with quasi-periodic potential. Commun. Math. Phys., {\bf 204(1)}, 207-247 (1999).
  
 
\bibitem{Com87}
Combescure, M.: The quantum stability problem for time-periodic perturbations of the harmonic oscillator. Ann. Inst. H. Poincar\'e Phys. Th\'eor. \textbf{47}(1), 63-83 (1987); Erratum: Ann. Inst. H. Poincar\'e Phys. Th\'eor. \textbf{47}(4), 451-454 (1987)


\bibitem{Del2014}
Delort, J.-M.: Growth of Sobolev norms for solutions of time dependent Schr\"odinger operators with harmonic oscillator potential. Commun. PDE \textbf{39}, 1-33 (2014)


\bibitem{EK2009} Eliasson, H.L., Kuksin, S. B.: On reducibility of Schr\"odinger equations with quasiperiodic in time potentials. Commun.  Math. Phys. \textbf{286}, 125-135 (2009) 

\bibitem{EK2010}
Eliasson, H.L., Kuksin S. B.:  KAM for the nonlinear Schr\"odinger equation.  Ann. Math \textbf{172}, 371-435 (2010)

\bibitem{EV}
Enss, V., Veselic, K.: Bound states and propagating states for time - dependent hamiltonians. Ann IHP \textbf{39}(2), 159-191 (1983)


\bibitem{FZ12}
Fang, D., Zhang, Q.: On growth of Sobolev norms in linear Schr\"odinger equations with time dependent Gevrey potentials. J. Dynam. Differential Equations., {\bf 24(2)}, 151-180 (2012).

\bibitem{FaRa2020} Faou, E., Rapha\"el, P.: On weakly turbulent solutions to the perturbed linear harmonic oscillator.  arXiv: 2006.08206 (2020)

\bibitem{FGiMP19}
Feola, R., Giuliani, F., Montalto, R., Procesi, M.: Reducibility of first order linear operators on tori via Moser's theorem. J. Funct. Anal., {\bf 276(3)}, 932-970 (2019).

\bibitem{FGr19}
Feola, R., Gr\'ebert, B.: Reducibility of Schr\"odinger equation on the sphere. Int. Math. Res. Not. \textbf{0}, 1-39 (2020)


\bibitem{FGN19}
Feola, R., Gr\'ebert, B., Nguyen, T.: Reducibility of Schr\"odinger equation on a Zoll manifold with unbounded potential. J. Math. Phys. \textbf{61} (7), 071501 (2020) 



\bibitem{GrYa2000}
Graffi, S., Yajima, K.: Absolute continuity of the Floquet spectrum for a nonlinearly forced harmonic oscillator. Commun. Math. Phys.  \textbf{215} (2), 245-250 (2000)





\bibitem{GP2019}
 Gr\'{e}bert, B.,  Paturel, E.: On reducibility of quantum harmonic oscillator on $\mathbb R^d$ with quasiperiodic in time potential. 
 Annales de la Facult\'e des sciences de Toulouse : Math\'ematiques. \textbf{28}, 977-1014 (2019) 

\bibitem{GT2011}
  Gr\'{e}bert, B., Thomann,  L.: KAM for the quantum harmonic oscillator. Commun. Math. Phys. \textbf{307}, 383-427 (2011)

\bibitem{KT2005} Koch, H., Tataru, D.: $L^p$ eigenfunction bounds for the Hermite operator. Duke Math. J. \textbf{128}, 369-392 (2005)


 \bibitem{LiangLuo2021}
Liang, Z., Luo, J.: Reducibility of 1-d quantum harmonic oscillator equation with  unbounded oscillation perturbations.  J. Differ. Equ.  \textbf{270}, 343-389 (2021)
 
\bibitem{LiangW2019}
Liang, Z. , Wang, Z.: Reducibility of quantum harmonic oscillator on $\mathbb R^d$ with differential and quasi-periodic in time potential. J. Differ. Equ.  \textbf{267},  3355-3395 (2019)

\bibitem{LW2020}
Liang, Z., Wang, Z. - Q.: Reducibility of 1-d Schr\"odinger equation with unbounded oscillation perturbations. Accepted by Israel Journal of Mathematics.  arXiv: 2003.13022v3

\bibitem{LiangWangZQ2021}
Liang, Z., Wang, Z. - Q.: Reducibility of 1-D quantum harmonic oscillator with decaying conditions on the derivative of perturbation potentials. ArXiv:2111.11679.  


\bibitem{LZZ2020}
Liang, Z., Zhao, Z., Zhou, Q.: 1-d quasi-periodic quantum harmonic oscillator with quadratic time-dependent perturbations: Reducibility and growth of Sobolev norms. 
J. Math. Pures Appl.  \textbf{146} 158-182 (2021)

\bibitem{LY2010}
  Liu, J., Yuan, X.: Spectrum for quantum duffing oscillator and small-divisor equation with large-variable coefficient. Commun. Pure Appl. Math. \textbf{63}, 1145-1172 (2010)
  
 \bibitem{LLZ2022}
 Luo, J., Liang, Z. and Zhao, Z.: Growth of Sobolev norms in 1-D Quantum harmonic oscillator with polynomial time quasi-periodic perturbation.  Commun. Math. Phys. \textbf{392}, 1-23 (2022)
  
\bibitem{Mas2018}
Maspero, A.: Lower bounds on the growth of Sobolev norms in some linear time dependent Schr\"odinger equations. Math. Res. Lett. \textbf{26}, 1197-1215 (2019)

\bibitem{Mas2021}
Maspero, A.:  Growth of Sobolev norms in linear Schr\"odinger equations as a dispersive phenomenon. arXiv:2101.09055.

\bibitem{MR2017}
Maspero, A., Robert, D.: On time dependent Schr\"odinger equations: Global well-posedness and growth of Sobolev norms. J. Func. Anal., {\bf 273(2)}, 721-781 (2017).
  


\bibitem{Mon19}
Montalto, R.: A reducibility result for a class of linear wave equations on $\mathbb T^d$. Int. Math. Res. Notices, {\bf 2019(6)}, 1788-1862 (2019).




\bibitem{ScTh2020}
Schwinte, V., Thomann, L.: Growth of Sobolev norms for coupled Lowest Landau Level equations. Pure Appl. Anal. \textbf{3}, 189-222 (2021)

\bibitem{Stein}
Stein, Elias: Harmonic Analysis: Real-variable Methods, Orthogonality and Oscillatory Integrals. Princeton University Press, 1993



 \bibitem{Th2020}
 Thomann, L.: Growth of Sobolev norms for linear Schr\"odinger operators. To appear in Pure Appl. Anal.  arXiv: 2006.02674


\bibitem{WL2017} Wang, Z., Liang, Z.: Reducibility of 1D quantum harmonic oscillator perturbed by a quasiperiodic potential with logarithmic decay. Nonlinearity. \textbf{30}, 1405-1448 (2017)


\bibitem{Wang2008}
Wang, W.-M.: Pure point spectrum of the Floquet Hamiltonian for the quantum harmonic oscillator under time quasi-periodic perturbations. Commun. Math. Phys. \textbf{277},
459-496 (2008)

\bibitem{WWM2008}
Wang, W.-M.: Logarithmic bounds on Sobolev norms for time dependent linear Schr\"odinger equations.  Comm. Partial Differential Equations, {\bf 33(12)}, 2164-2179 (2008).

\end{thebibliography}
\end{document}